\documentclass[english,11pt]{article}
\usepackage{amsmath}
\usepackage{graphicx}
\usepackage{amssymb}
\usepackage[usenames, dvipsnames]{color}
\usepackage{tikz}
\usetikzlibrary{arrows,shapes,positioning}
\usetikzlibrary{decorations.markings}
\tikzstyle arrowstyle=[scale=1.5]
\tikzstyle{vertex}=[circle, draw, inner sep=0pt, minimum size=6pt]

\tikzstyle directed=[postaction={decorate,decoration={markings, mark=at position .5 with {\arrow[arrowstyle]{stealth}}}}]
\tikzstyle reverse directed=[postaction={decorate,decoration={markings, mark=at position .5 with {\arrowreversed[arrowstyle]{stealth};}}}]
\usepackage{}

\usepackage{amsthm}

\usepackage{enumerate}

\allowdisplaybreaks[4]

\def\0{\emptyset}

\def\p{\noindent {\bf Proof.~~}}
\def\q{\hfill\rule{1ex}{1ex}}

\newtheorem{Theorem}{Theorem}[section]

\newtheorem{Claim}{Claim}

\newtheorem{Corollary}[Theorem]{Corollary}
\newtheorem{Definition}[Theorem]{Definition}

\newtheorem{Lemma}[Theorem]{Lemma}

\newtheorem{Problem}[Theorem]{Problem}

\begin{document}

\title{Spectral radius and edge-disjoint spanning trees of graphs with edge connectivity
}

\author{Jia Wei\thanks{Department of Mathematics, South China Agricultural University, Guangzhou, 510640,
China, Email: jiawei\_math@163.com (J. Wei); liumuhou@163.com (M. Liu) (Corresponding author)},
Muhuo Liu$^*$,  
Zhifu You\thanks{School of Mathematics and Systems Science, Guangdong Polytechnic Normal University, Guangzhou 510665,
China. Email: zhifuy@hotmail.com}, 
Hong-Jian Lai\thanks{School of Mathematics and Systems Science, Guangdong Polytechnic Normal University, Guangzhou 510665,
China, and Department of Mathematics, West Virginia University,
Morgantown, WV 26506, USA. Email: hjlai2015@hotmail.com }, 
}

\date{ }
\maketitle
\begin{abstract}
Let $\tau(G)$ denote the maximum number of pairwise edge-disjoint
spanning trees contained in a graph $G$.
Zhang and Fan [J. Graph Theory, 2026] posed the problem of determining the corresponding tight spectral-radius condition for every $s$-edge-connected graph with
\[
k+1\leq s\leq2k-1,
\]
together with a characterization of the associated extremal graphs.
In this paper, we completely resolve this problem.
\end{abstract}

{\small
\noindent {\bf Key words:} Spectral radius;  edge-disjoint spanning trees; edge connectivity;
\\
{\bf AMS subject classification 2010:}  05C20, 05C40, 05C45, 05C70.
}

\vskip 0.35cm

\section{Introduction}
We discuss finite and simple graphs in this paper.
Undefined terms and notation in graph theory will follow those in \cite{BoMu08}.
Throughout this paper, we use $n = |V(G)|$, $e(G)$, $\delta(G)$, $\Delta(G)$, $\kappa(G)$, $\kappa'(G)$ and $\overline{G}$ to denote the order, the edge number, the  minimum degree, the maximum degree, the connectivity,
the edge connectivity and the complement of   graph $G$, respectively.
Let $G_1$ and $G_2$ be two vertex disjoint graphs.
Following \cite{BoMu08}, the {\bf disjoint union} of $G_1$ and $G_2$, denoted by $G_1+G_2$,
is the graph with vertex set $V(G_1)\cup V(G_2)$ and edge set $E(G_1)\cup E(G_2)$;
the {\bf join} of $G_1$ and $G_2$,  denoted by $G_1\vee G_2$, has vertex set $V(G_1)\cup V(G_2)$ and edge set $E(G_1)\cup E(G_2)\cup\{xy: x\in V(G_1),y\in V(G_2)\}$.
The disjoint union of $k$ copies of  graph $G$ will be referred as  $kG$.

For a graph $G$ with vertex set $V(G)=\{v_1,v_2,\ldots,v_n\}$,
the adjacency matrix of $G$ is defined to be a $(0,1)$-matrix
$A(G)=(a_{ij})$, where $a_{ij}=1$ if and only if  $v_i$ and $v_j$ are adjacent in $G$.
The largest eigenvalue of $A(G)$, denoted by $\lambda(G)$, is called the {\bf spectral radius} of $G$.


The existence of edge-disjoint spanning trees is a classical topic in
graph theory, with close connections to graph connectivity, network
reliability, routing, matroid theory, and graph decompositions. For a
graph $G$, let $\tau(G)$ denote its {\bf spanning-tree packing number}, that
is, the maximum number of pairwise edge-disjoint spanning trees
contained in $G$. The fundamental characterization of $\tau(G)$ was
established independently by Nash-Williams and Tutte.

\begin{Theorem}[Nash-Williams \cite{Nash1961}, Tutte \cite{Tutt1961}]
\label{Nash-Williams-Tutte}
Let $G$ be a connected graph and let $k$ be a positive integer. Then
$\tau(G)\geq k$ if and only if $
e_G(\pi)\geq k\bigl(|\pi|-1\bigr)$
for every partition $\pi$ of $V(G)$, where $e_G(\pi)$ denotes the
number of edges of $G$ whose end-vertices lie in distinct parts of
$\pi$.
\end{Theorem}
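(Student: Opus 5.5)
Necessity is immediate. Suppose $G$ has $k$ pairwise edge-disjoint spanning trees $T_1,\dots,T_k$, and let $\pi$ be any partition of $V(G)$. Contracting each part of $\pi$ to a single vertex turns each $T_i$ into a connected spanning multigraph on $|\pi|$ vertices. So each $T_i$ has at least $|\pi|-1$ edges joining distinct parts, and summing over $i$ gives $e_G(\pi)\ge k(|\pi|-1)$.

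For sufficiency I would use matroid theory. Let $M$ be the cycle matroid of $G$ on ground set $E(G)$, with rank function $r(X)=n-c(X)$, where $c(X)$ is the number of components of the spanning subgraph $(V(G),X)$. The claim $\tau(G)\ge k$ says exactly that the union matroid $M^k=M\vee\cdots\vee M$ has rank $k(n-1)$. By the matroid union theorem (Edmonds, Nash-Williams), the rank of $E$ in $M^k$ is
\[
\min_{X\subseteq E}\Bigl(|E\setminus X|+k\,r(X)\Bigr).
\]
Fix a minimizing $X$, and let $\pi$ be the partition of $V(G)$ into the vertex sets of the components of $(V(G),X)$. Then $|\pi|=c(X)$. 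Every edge between distinct parts lies outside $X$, so $|E\setminus X|\ge e_G(\pi)$, and by hypothesis $e_G(\pi)\ge k(|\pi|-1)$. Hence
\[
|E\setminus X|+k\,r(X)\ \ge\ k(c(X)-1)+k(n-c(X))=k(n-1).
\]
So $M^k$ has rank at least $k(n-1)$, and therefore $E$ contains $k$ disjoint independent sets whose sizes sum to $k(n-1)$. Each forest in $G$ has at most $n-1$ edges, so each of these sets has exactly $n-1$ edges and is a spanning tree.

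The main obstacle is the matroid union theorem itself. If it may not be assumed, I would prove sufficiency directly by an augmenting-path argument on a $k$-tuple of edge-disjoint forests with the largest total size:
\begin{enumerate}[(i)]
\item Suppose some forest $F_1$ is not spanning, and take an edge $e$ joining two components of $F_1$.
\item Start from $e$ and build exchange sequences, inserting $e$ into some $F_i$ and removing an edge from the cycle it creates, then moving that edge to another forest, and so on.
\item Let $S$ be the set of edges reachable by such exchanges. If no augmentation exists, then every $F_i$ restricted to $S$ spans the same vertex classes.
\item These classes form a partition $\pi$. Its cross edges are too few, namely $e_G(\pi)<k(|\pi|-1)$, contradicting the hypothesis.
\end{enumerate}
Choosing the right closure set $S$ is the delicate step in this route. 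I would follow Tutte's original construction and take $S$ to be the closure of $e$ under these exchanges.
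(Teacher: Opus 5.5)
The paper does not prove this theorem. It quotes it from Nash-Williams and Tutte and uses it as a black box, and only the sufficiency direction is ever needed there: in contrapositive form, it extracts from $\tau(G)\le k-1$ a partition $\pi$ with $e_G(\pi)\le k(|\pi|-1)-1$.

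Your main argument is correct.
\begin{itemize}
\item The necessity part is the standard contraction count.
\item In the sufficiency part, the estimate $|E\setminus X|+k\,r(X)\ge k(n-1)$ holds for every $X\subseteq E$, so you do not even need $X$ to be a minimizer.
\item The step from rank $k(n-1)$ in the union matroid to $k$ edge-disjoint spanning trees is also right.
\end{itemize}
Compared with the classical direct proofs, your route is shorter. It also shows that the partition condition is exactly the matroid union rank formula specialized to the cycle matroid. The cost is that all the combinatorial content moves into the Edmonds--Nash-Williams matroid union theorem. That theorem is more general, and its usual proof is itself the exchange argument you sketch afterwards. So you gain brevity and generality by relying on a heavier black box, which is no more than the paper itself assumes.

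Your fallback augmenting-path outline would not stand on its own, because step (iv) does not follow from (iii). The closure $S$ of a single edge $e$ only gives one vertex set $U$, containing the ends of $e$, on which every $F_i$ is connected. A partition built from $U$ and the remaining vertices has no reason to have fewer than $k(|\pi|-1)$ cross edges.

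The missing idea is to consider all partitions whose classes induce connected subgraphs in every $F_i$, and to take one with $|\pi|$ minimum.
\begin{itemize}
\item Maximality of $\sum_i|F_i|$ passes to the contracted graph $G/\pi$.
\item So if some cross edge lay outside $F_1\cup\cdots\cup F_k$, your closure argument run in $G/\pi$ from that edge would let you merge classes, contradicting minimality. Hence every cross edge lies in some $F_i$.
\item Each $F_i$ then contributes at most $|\pi|-1$ cross edges, and the non-spanning $F_1$ contributes strictly fewer. This gives $e_G(\pi)<k(|\pi|-1)$.
\end{itemize}
Alternatively, contract $U$ and induct on $n$, which requires working with multigraphs.

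Also, in step (ii) the exchanges must follow a shortest exchange sequence. Otherwise the intermediate edge sets need not remain forests.
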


The Nash-Williams--Tutte theorem provides a powerful min--max
criterion for spanning-tree packings and is one of the fundamental
tools in the study of graph connectivity. For a survey on the
spanning-tree packing number, we refer the reader to Palmer
\cite{Palm2001}.

Motivated by a question of Seymour, Cioab\u{a} and Wong
\cite{CiWo2012} initiated the spectral study of the spanning-tree
packing number. Since then, some spectral conditions guaranteeing
the existence of edge-disjoint spanning trees have been established
using eigenvalues of the adjacency, Laplacian, signless Laplacian, and
distance matrices; see
\cite{COPP2022,DuWL2020,FaGL2023,FaHZ2025,Gu2016,GLLY2016,HuWD2023,LiSh2013,LHGL2014,LiHL2014,LiLT2019}
and the references therein.

Recently, Zhang and Fan \cite{ZhFa2026} obtained sharp
adjacency spectral radius conditions for $k$-edge-connected graphs
and, for $(2k-1)$-edge-connected graphs in the special case $k=2$. They
further posed the following problem.

\begin{Problem}[Zhang and Fan \cite{ZhFa2026}]\label{EDST-Problem1}
Let $k\ge 2$ and $k+1\le s\le 2k-1$. Determine the best possible lower bound on $\lambda(G)$ which guarantees $\tau(G)\ge k$ for every $s$-edge-connected graph $G$ of order $n$, and characterize all extremal graphs.
\end{Problem}

Writing $s=k+c$, where $1\leq c\leq k-1$, Gao and Wang
\cite{GaWa2026} obtained sharp spectral radius thresholds and reduced
the possible extremal graphs to certain explicitly defined structural
families. When $c=1$, they determined the unique extremal graph.
For $c\geq2$, however, which members of these families attain the
maximum spectral radius remained undetermined.

In this paper, we completely solve Problem~\ref{EDST-Problem1}.
More precisely, for   each $k\leq s\leq2k-1$, we determine the corresponding extremal  $s$-edge-connected graph  of order
$n$ with $\tau(G)\leq k-1$ having the maximum adjacency spectral
radius.

\begin{figure}[htbp]
\centering
\begin{tikzpicture}[scale=0.8]
\node[draw, ellipse,
minimum width=2.6cm,
minimum height=1.8cm]
(A) at (0.5,0) {};

\node[left] at (1.2,-1.3)
{$K_{2k-s-1}$};
\node[draw, ellipse,
minimum width=4.25cm,
minimum height=2.1cm]
(B) at (6,2.8) {};

\node[right] at (8.6,2.8)
{$K_{n-s-1}$};
\node[draw, ellipse,
minimum width=4.2cm,
minimum height=2.5cm]
(C) at (6,-2.5) {};

\node[right] at (8.6,-2.5)
{$K_{2s-2k+2}-K_{1,s-k}$};
\draw (1.65,0.8) -- (3.5,2.35);
\draw (1.65,-0.8) -- (3.5,-2.0);
\node[circle,draw,fill=white,inner sep=1.5pt]
(u1) at (4.3,2.6) {};

\node[circle,draw,fill=white,inner sep=1.5pt]
(u2) at (5.3,2.6) {};

\node at (6.3,2.6) {$\cdots$};

\node[circle,draw,fill=white,inner sep=1.5pt]
(uk) at (7.3,2.6) {};

\node[above] at (u1) {$u_1$};
\node[above] at (u2) {$u_2$};
\node[above] at (uk) {$u_{s-k}$};
\node[circle,draw,fill=white,inner sep=1.5pt]
(w1) at (4.2,-2.3) {};

\node[circle,draw,fill=white,inner sep=1.5pt]
(w2) at (5.8,-1.6) {};

\node[circle,draw,fill=white,inner sep=1.5pt]
(w3) at (5.8,-2.25) {};

\node at (5.8,-2.65) {$\vdots$};

\node[circle,draw,fill=white,inner sep=1.5pt]
(wt) at (5.8,-3.45) {};

\node[left] at (w1) {$w_1$};
\node[right] at (w2) {$w_2$};
\node[right] at (w3) {$w_3$};
\node[right] at (wt) {$w_{s-k+1}$};

\draw[thick] (4.7,-2.15) -- (4.8,-1.95);
\draw[thick] (4.7,-1.95 ) -- (4.8,-2.15);

\draw[thick] (4.85,-2.4) -- (4.95,-2.2);
\draw[thick] (4.85,-2.2) -- (4.95,-2.4);

\draw[thick] (4.9,-2.95) -- (5.0,-2.75);
\draw[thick] (4.9,-2.75) -- (5.0,-2.95);
\draw (w1) -- (w2);
\draw (w1) -- (w3);
\draw (w1) -- (wt);
\draw (w1) -- (u1);
\draw (w1) -- (u2);
\draw (w1) -- (uk);

\draw (u1) -- (w2);
\draw (u1) -- (w3);
\draw (u1) -- (wt);
\end{tikzpicture}
\caption{The graph $H_1(s,k)$.~~~}\label{fig1}
\end{figure}

Let $H_1(s,k)$ (see Figure \ref{fig1}) be the graph obtained from
$K_{2k-s-1}\vee\Bigl(K_{n-s-1}+\bigl(K_{2s-2k+2}-E(K_{1,s-k})\bigr)\Bigr)$
by adding $2(s-k)$ edges between $K_{n-s-1}$ and $K_{2s-2k+2}-E(K_{1,s-k})$
such that each vertex of $K_{2s-2k+2}- E(K_{1,s-k})$ has degree $s$, and all leaves in $K_{1,s-k}$ are adjacent to a common vertex of $K_{n-s-1}$.
Then  $H_1(s,k)$ is an $s$-edge-connected graph (see Lemma \ref{EDST-connect-le1} bellow).

\begin{Theorem}\label{spectral-spanning-tree-thm1}
Let $G$ be an $s$-edge-connected graph of order $n$, where $n\ge 3s^2$ and $2\le k\le s\le 2k-2$. If
$\lambda (G) \ge \lambda(H_1(s,k)),$
then $\tau(G)\ge k$ unless $G\cong H_1(s,k)$.
\end{Theorem}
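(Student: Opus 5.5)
We argue by contradiction: suppose $G$ is $s$-edge-connected with $\lambda(G)\ge\lambda(H_1(s,k))$, $\tau(G)\le k-1$, and $G\not\cong H_1(s,k)$. By Theorem~\ref{Nash-Williams-Tutte} there is a partition $\pi=\{V_1,\dots,V_p\}$ of $V(G)$ with $e_G(\pi)\le k(p-1)-1$. Since $G$ is $s$-edge-connected, each part sends at least $s$ edges out, so $sp\le 2e_G(\pi)\le 2k(p-1)-2$. This gives $p(2k-s)\ge 2k+2$, hence $p\ge 3$, and also $p\le$ a constant depending only on $s,k$. Among all such graphs we take $G$ to be edge-maximal, i.e.\ spectrally extremal. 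Adding edges inside parts preserves both the violated partition inequality and $s$-edge-connectivity while strictly increasing $\lambda$, so we may assume every $V_i$ induces a clique. We may also assume $e_G(\pi)=k(p-1)-1$ exactly, since adding crossing edges only helps until equality.

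\textbf{Step 1 (one giant part).} We compare with the lower bound $\lambda(H_1(s,k))>\lambda(K_{n-s-1})=n-s-2$, and the finer estimate $\lambda(H_1(s,k))\ge n-s-2+\epsilon$ obtained from the quotient matrix of the equitable-like partition. Then $n\ge 3s^2$ forces one part, say $V_1$, to have $|V_1|\ge n-c(s)$. Otherwise $e(G)$ is too small relative to $\lambda^2\le 2e(G)-n+1$ (Hong's bound) or to the bound $\lambda\le\max$ over parts plus crossing edges. Every small part $V_i$ with $i\ge2$ is a clique whose vertices have degree at least $s$, so $|V_i|-1+(\text{out-edges of }v)\ge s$.

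\textbf{Step 2 (structure of the small parts).} We minimize the cost of the small parts: the total number $e_G(\pi)\le k(p-1)-1$ of crossing edges must be shared among $p-1$ small parts, each needing at least $s$ outgoing edges, with small cliques requiring more. The counting $\sum_{i}d(V_i)=2e_G(\pi)$ together with $d(V_1)\ge s$ shows that the configuration maximizing $\lambda$ has $p=3$ only after merging. Concretely, we would show via a Kelner-type edge-moving argument (rotating a crossing edge so that it lands in the giant clique, using Perron vector entries $x_v$ close to $1$ on $V_1$ and $O(1/n)$ elsewhere) that all small parts can be merged into structure with one singleton-free clique set $K_{2k-s-1}$ joined completely to everything and one part $K_{2s-2k+2}-E(K_{1,s-k})$. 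This is the shape of $H_1(s,k)$: the $2k-s-1$ vertices become universal, which is a cheap way of buying $s$-connectivity. The key inequality is $\lambda(G')-\lambda(G)\ge x^{T}(A'-A)x/x^{T}x>0$ whenever a transformation redirects edges toward larger Perron entries.

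\textbf{Step 3 (identify the extremal graph).} Once we have reduced to the explicit families (essentially those of Gao--Wang), we compare their spectral radii through characteristic polynomials of quotient matrices. Writing $\lambda=n-s-2+\theta$ with $\theta=O(1/n)$, we compare the second-order terms. The main obstacle is this step: several candidates differ only in how the $2(s-k)$ edges to $K_{n-s-1}$ are attached, and the difference in $\lambda$ is of order $1/n^2$. For this comparison we would use the Perron vector: the leaves of $K_{1,s-k}$ attaching to a common vertex of $K_{n-s-1}$ increases $x^TAx$ through second-order terms. We will also invoke Lemma~\ref{EDST-connect-le1} to confirm that $H_1(s,k)$ is admissible. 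Equality then forces $G\cong H_1(s,k)$.
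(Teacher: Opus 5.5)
Your proposal has genuine gaps: the decisive steps are asserted or misdescribed rather than proved.

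\textbf{Step 3 is the central gap.} You flag it yourself as the main obstacle. Deciding which member of the Gao--Wang families is extremal is exactly what was open for $s-k\ge 2$, so deferring it to ``second-order terms of the Perron vector'' leaves the theorem unproved. Your heuristic (the leaves of $K_{1,s-k}$ attaching to a common vertex) only concerns how the $2(s-k)$ edges into $U_2$ are attached. The paper settles that cheaply with Kelmans transformations, which give nested neighbourhoods (Claim~\ref{spectral-spanning-tree-claim8}). Your heuristic says nothing about the real content: why the $s-k$ missing edges among the $2s-2k+2$ degree-$s$ vertices must form a star rather than, say, a matching or a path. The paper handles this in two steps, not by comparing characteristic polynomials:
\begin{enumerate}
\item A four-edge switch $G+z'w_r-w_rw_i+w_iw_j-w_ju_1$, evaluated through $\mathbf y^{T}(A(G_1)-A(G))\mathbf x$ with the Perron vectors of both graphs. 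This forces nested closed neighbourhoods in $\overline{G}[U_3]$ (Claim~\ref{spectral-spanning-tree-claim9}).
\item A simultaneous multi-edge transformation of $G$ directly into $H_1(s,k)$, with explicit estimates on the Perron vector of $H_1(s,k)$ (Claim~\ref{spectral-spanning-tree-claim10}).
\end{enumerate}
Your proposed expansion point is also wrong. Since $H_1(s,k)\supseteq K_{2k-s-1}\vee K_{n-s-1}=K_{n-2s+2k-2}$, we have $\lambda(H_1(s,k))>n-2s+2k-3\ge n-s-1$. So writing $\lambda=n-s-2+\theta$ with $\theta=O(1/n)$ is off by at least $2k-s-1\ge 1$.

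\textbf{Steps 1--2 do not establish the structure either.}
\begin{itemize}
\item The inequality $p(2k-s)\ge 2k+2$ bounds $p$ only from below. The upper bound $p\le 2s-2k+3$ has to come from comparing $\sum_i\binom{|V_i|}{2}+k(p-1)-1$ with the edge count forced by $\lambda(G)>n-2s+2k-3$ via Lemma~\ref{le3-1}.
\item You never show that the small parts are singletons. The paper chooses $\pi$ with the maximum number of singleton parts. This forces every non-singleton part to have at least $k+2$ vertices: a vertex in a part of size at most $k+1$ has at most $k$ neighbours inside and could be split off while keeping $\pi$ deficient. Only then does edge counting rule out non-singleton small parts.
\item Your description of the endpoint is inconsistent. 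You first make every part a clique, then call $K_{2s-2k+2}-E(K_{1,s-k})$ a part, which is not a clique when $s>k$.
\item ``$p=3$ only after merging'' is false. For $H_1(s,k)$ the deficient partition has $p=2s-2k+3$ parts: one giant clique containing the universal vertices, plus singletons. Moreover, merging $V_i,V_j$ preserves deficiency only when $e(V_i,V_j)\ge k$.
\item The count $2k-s-1$ of universal vertices needs a two-sided argument, which you replace by the remark that it is ``a cheap way of buying $s$-connectivity''. If there are fewer, a strict Kelmans improvement exists. If there are more, then $e_G(\pi)\ge \frac12(p-1)(|U_1|+s)\ge k(p-1)$, contradicting deficiency.
\end{itemize}
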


Let $H_2$ be the graph obtained from two copies of $K_{1,k}$ by identifying one leaf from each copy. Denote by $H_2(k)$ (see Figure \ref{fig2})  the graph obtained from  $K_{n-2k-1}+\bigl(K_{2k+1}-E(H_2)\bigr)$ by adding $2k-1$ edges between $K_{n-2k-1}$ and $K_{2k+1}-E(H_2)$ such that every vertex of $K_{2k+1}-E(H_2)$ has degree exactly $2k-1$. Moreover, the two vertices of degree $k$ in $H_2$ are adjacent to the same $k-1$ vertices of $K_{n-2k-1}$, and one of these $k-1$ vertices is also adjacent to the common leaf vertex of $H_2$. Then $H_2(k)$ is a  $(2k-1)$-edge-connected graph. (see Lemma \ref{EDST-thm2-le1} bellow).

\begin{figure}[htbp]
\centering
\begin{tikzpicture}[scale=0.7]

\coordinate (u1) at (-2.6,5.8);
\coordinate (u2) at (-0.8,6.0);
\coordinate (uk) at ( 2.6,5.8);

\coordinate (w1) at (-2.7,0.8);
\coordinate (w3) at ( 0.0,1.0);
\coordinate (w2) at ( 2.7,0.8);

\coordinate (v1)  at (-3.35,-0.85);
\coordinate (vk)  at (-2.10,-0.85);
\coordinate (vp1) at ( 2.10,-0.85);
\coordinate (vpk) at ( 3.35,-0.85);

\draw (-0.1,5.9) ellipse (5.0cm and 1.75cm);
\draw ( 0.0,0.0) ellipse (5.35cm and 2.45cm);
\node[right] at (4.95,5.90) {$K_{n-2k-1}$};
\node[right] at (5.30,0.10) {$K_{2k+1}-H_2$};

\draw (u1)--(w1);
\draw (u1)--(w2);
\draw (u1)--(w3);
\draw (u2)--(w1);
\draw (u2)--(w2);
\draw (uk)--(w1);
\draw (uk)--(w2);

\draw (w1)--(w3)--(w2);
\draw (w1)--(v1);
\draw (w1)--(vk);
\draw (w2)--(vp1);
\draw (w2)--(vpk);

\node[inner sep=2pt] at (-1.35,0.90) {$\times$};
\node[inner sep=2pt] at ( 1.35,0.90) {$\times$};
\node[inner sep=2pt] at (-3.03,-0.02) {$\times$};
\node[inner sep=2pt] at (-2.40,-0.02) {$\times$};
\node[inner sep=2pt] at ( 2.40,-0.02) {$\times$};
\node[inner sep=2pt] at ( 3.03,-0.02) {$\times$};

\draw[fill=white] (u1) circle (0.075cm);
\draw[fill=white] (u2) circle (0.075cm);
\draw[fill=white] (uk) circle (0.075cm);
\draw[fill=white] (w1) circle (0.075cm);
\draw[fill=white] (w3) circle (0.075cm);
\draw[fill=white] (w2) circle (0.075cm);
\draw[fill=white] (v1) circle (0.075cm);
\draw[fill=white] (vk) circle (0.075cm);
\draw[fill=white] (vp1) circle (0.075cm);
\draw[fill=white] (vpk) circle (0.075cm);

\node[above] at (u1) {$u_1$};
\node[above] at (u2) {$u_2$};
\node[above] at (uk) {$u_{k-1}$};
\node at (0.85,5.95) {$\cdots$};

\node[above left]  at (w1) {$w_1$};
\node[above left]  at (w3) {$w_3$};
\node[above right] at (w2) {$w_2$};

\node[below] at (v1)  {$v_1$};
\node[below] at (vk)  {$v_{k-1}$};
\node[below] at (vp1) {$v'_1$};
\node[below] at (vpk) {$v'_{k-1}$};
\node at (-2.72,-0.85) {$\cdots$};
\node at ( 2.72,-0.85) {$\cdots$};

\end{tikzpicture}
\caption{The graph $H_2(k)$.~~~~~~~~~}\label{fig2}
\end{figure}

\begin{Theorem}\label{spectral-spanning-tree-thm2}
Let $G$ be a $(2k-1)$-edge-connected graph of order $n\ge 8k^2-2k$, where $ k\ge 2$. If $
\lambda (G) \ge \lambda(H_2(k)),$
 then $\tau(G)\ge k$ unless $G\cong H_2(k)$.
\end{Theorem}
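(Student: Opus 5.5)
We argue by contradiction. Suppose $G$ is $(2k-1)$-edge-connected of order $n\ge 8k^2-2k$, $\lambda(G)\ge\lambda(H_2(k))$, $\tau(G)\le k-1$ and $G\not\cong H_2(k)$. By Theorem~\ref{Nash-Williams-Tutte} there is a partition $\pi=\{V_1,\dots,V_p\}$ of $V(G)$ with $e_G(\pi)\le k(p-1)-1$. Since $G$ is $(2k-1)$-edge-connected, each part sends at least $2k-1$ edges out. Hence
\[
(2k-1)p\le 2e_G(\pi)\le 2k(p-1)-2,
\]
which forces $p\ge 2k+2$. It also shows that almost all parts have exactly $2k-1$ (or very few more) outgoing edges.

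\textbf{Reduction to one large part.} First compare with a lower bound on $\lambda(H_2(k))$. Since $H_2(k)$ contains $K_{n-2k-1}$, we get $\lambda(H_2(k))>n-2k-2$; a slightly sharper estimate via the Rayleigh quotient or a quotient matrix will be used if needed. Next we use the bound $\lambda(G)\le\sqrt{2e(G)-n+1}$ (Hong's bound, valid as $\delta\ge 1$), or a similar edge-count bound, to show that $e(G)\ge\binom{n-2k-1}{2}+O(k^2)$. If two parts had size at least $2k$, or if the small parts were too numerous, $e(G)$ would fall below this. Hence exactly one part $V_1$ is large, with $|V_1|\ge n-O(k)$, and the remaining parts are small. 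A part $V_i$ that is small but not a singleton satisfies $|V_i|(|V_i|-1)+e(V_i,\overline{V_i})\ge |V_i|(2k-1)$, so $|V_i|\ge 2k$ unless it is a singleton. Such a part can be merged or refined; the standard refinement argument shows the violating partition can be taken with all small parts singletons, or else with few non-singleton parts that cost too many edges. So we may assume $\pi$ consists of $V_1$ plus $p-1\ge 2k+1$ singletons. Let $S$ denote this set of singletons.

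\textbf{Structure on the singletons.} We now have $e_G(\pi)=e(S)+e(S,V_1)\le k(p-1)-1$, with every vertex of $S$ of degree at least $2k-1$. Taking $|S|=p-1=t$, the degree sum gives $2e(S)+e(S,V_1)\ge t(2k-1)$. Adding $e(S)$ to the bound $e(S)+e(S,V_1)\le kt-1$ and combining yields $e(S)\ge t(k-1)+1$. Since $e(S)\le\binom t2$, this forces $t\ge 2k-1$, consistent with the above. The spectral radius is maximized by making $G[V_1]$ complete and by placing as many edges as possible inside $S$ while keeping the cut to $V_1$ as small as allowed. Using $n\ge 8k^2-2k$, a Perron-vector comparison shows that any extra vertex in $S$ beyond $2k+1$ costs about $n$ in $\lambda$, which is more than the gain from edge reshuffling. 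Hence $t=2k+1$ and $G[V_1]\cong K_{n-2k-1}$. Then $e(S)+e(S,V_1)\le 2k^2+k-1$, with all degrees in $S$ at least $2k-1$; equivalently, the complement $\overline{G[S]}$ has many edges relative to $K_{2k+1}$, so the deficiency must be absorbed by edges to $V_1$.

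\textbf{Extremal step (main obstacle).} Among all such graphs we must show that $H_2(k)$ uniquely maximizes $\lambda$. Only finitely many configurations on $2k+1$ vertices remain, but they are hard to compare: the configuration consists of $G[S]$ together with the pattern of $2k-1$ (or so) edges to $V_1$. We plan three steps:
\begin{enumerate}[(i)]
\item Show the number of $S$–$V_1$ edges is exactly $2k-1$. More would force fewer edges inside $S$, and the gain in the dominant $x_{V_1}$ component does not compensate.
\item Use Kelmans-type or edge-rotation operations: moving an edge endpoint from a lower- to a higher-Perron-entry vertex strictly increases $\lambda$. This forces all $S$–$V_1$ edges to concentrate on as few $V_1$ vertices as possible ($k-1$ of them, with one shared), and forces the missing edges inside $S$ to form the double star $H_2$.
\item Compare the remaining candidates, such as other graphs with $k$ missing-edge structures, by an eigenvector estimate. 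Here $x_v\approx 1$ on $V_1$ and $x_v\approx (\deg_{V_1}(v)+\cdots)/n$ on $S$, so the leading correction to $\lambda$ is roughly $\sum_{uv\in E(S,V_1)}x_ux_v$ plus second-order terms. This reduces the comparison to counting weighted walks of length two between $S$ and $V_1$.
\end{enumerate}
Making the $O(1/n)$ comparison rigorous, with the bound $n\ge 8k^2-2k$ controlling the error terms, is where we expect the real work to lie. Finally, we invoke Lemma~\ref{EDST-thm2-le1} to confirm that $H_2(k)$ is admissible and satisfies $\tau(H_2(k))\le k-1$, since its partition into $V_1$ and $2k+1$ singletons has $e(\pi)=2k^2+k-1<k\cdot 2k+1$ crossing edges.
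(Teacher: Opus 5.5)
Your reduction stage follows the paper's route: a Nash-Williams--Tutte partition, $p\ge 2k+2$, then an edge count against a Hong-type bound that gives $p=2k+2$ and a partition consisting of one large part $V_1$ plus $2k+1$ singletons. There are some repairable slips:
\begin{itemize}
\item The claim ``$|V_i|\ge 2k$ unless $V_i$ is a singleton'' does not follow from $|V_i|(|V_i|-1)+e(V_i,\overline{V_i})\ge(2k-1)|V_i|$, because the cut term is not yet controlled. The paper instead chooses a violating partition with the most singletons, which gives $|V_i|\ge k+2$. Alternatively, get $p\le 2k+2$ first from $e(G)\le\binom{n-p+1}{2}+k(p-1)-1$, which holds for every $p$-partition.
\item In your last line the threshold is $k(2k+1)$, not $k\cdot 2k+1$.
\item Your step (i) needs no spectral argument. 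With $|S|=2k+1$, the inequalities $2e(S)+e(S,V_1)\ge(2k+1)(2k-1)$, $e(S)+e(S,V_1)\le 2k^2+k-1$ and $e(S,V_1)\ge 2k-1$ force $e(S,V_1)=2k-1$, $d_G(v)=2k-1$ on $S$, $e(\overline{G}[S])=2k$, and $d_{\overline{G}[S]}(v)=d_{V_1}(v)+1$.
\end{itemize}

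The genuine gap is the extremal step. You only sketch it, and the sketch misses the three things that make it work.

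\emph{Missing constraint.} The binding constraint is $\Delta(\overline{G}[S])\le k$. It is forced by edge-connectivity, since the cut $(V_1\cup\{v\},S\setminus\{v\})$ has exactly $4k-2-2(d_{\overline{G}[S]}(v)-1)$ edges. Conversely, Lemma~\ref{EDST-thm2-le1} shows this constraint is sufficient for admissibility, and that is what certifies every modified graph in the paper. Without it, ``concentrate the cross edges'' points the wrong way: $\overline{G}[S]=K_{1,2k}$ maximizes $\sum_{v\in S}d_{V_1}(v)^2$ but disconnects $G$. It is $d_{V_1}(v)\le k-1$ together with $\sum_v d_{V_1}(v)=2k-1$ that yields the pattern $(k-1,k-1,1,0,\dots,0)$.

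\emph{Unavailable moves.} Rotations or Kelmans moves that shift an endpoint between vertices of $S$ are not allowed, since every vertex of $S$ already has degree exactly $2k-1$. Kelmans moves inside the clique $V_1$ only give nested neighbourhoods. The paper therefore uses degree-preserving alternating switches such as $G+uw_2-w_2w'+w'w-wu_1$. It then has to bound $\mathbf{y}^T(A(G_1)-A(G))\mathbf{x}$ with two different Perron vectors, using $n\ge 8k^2-2k$ (Claims~\ref{specST-th2-claim7-0} and \ref{specST-th2-claim7}).

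\emph{Leading-order ties.} Your leading-order heuristic, length-two walks (essentially $\sum_{v\in S}d_{V_1}(v)^2$), cannot finish the job. Two other admissible complements share $H_2$'s degree sequence $(k,k,2,1,\dots,1)$ and hence its $V_1$-pattern:
\begin{itemize}
\item the tree with adjacent centres $w_1,w_2$ and a pendant path $w_1w_3w$;
\item a triangle $w_1w_2w_3$ with pendants, plus a disjoint $K_2$.
\end{itemize}
These tie with $H_2$ at that order. The paper separates them by exact switchings: for the first, $G_3=G+w_1w_2-w_2w_3+ww_3-w_1w$ with $\mathbf{x}^T(A(G_3)-A(G))\mathbf{x}=2(x_{w_1}-x_{w_3})(x_{w_2}-x_w)>0$; for the second, the analogous switch in Claim~\ref{specST-th2-claim6-7}(ii).

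As written, your argument reaches the family $\mathcal{H}_2(k)$ but does not prove that $H_2(k)$ is its unique spectral maximizer, which is the substance of the theorem.
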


The rest of this paper is organized as follows. In Section~2, we present several preliminary results. In Section~3, we prove our main theorems.

\section{Preliminaries}
The purpose of this section is to establish some auxiliary results that will be used in the proofs of our main theorems. We begin by recalling some known results.

\begin{Lemma}  {\em(Hong et al. \cite{HoSF2001}, and Nikiforov \cite{lesN2002})} \label{le3-1}
Let $G$ be a graph with  $n$ vertices, $e(G)$ edges  and    minimum degree $\delta(G)$. If $\delta(G)\ge r$, then
\[
\lambda(G)\leq \frac{r-1+\sqrt{{(r+1)}^2+4(2e(G)-nr)}}{2}.
\]
\end{Lemma}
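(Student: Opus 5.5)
This is a cited result (Hong, Shu and Fang; Nikiforov), so I only plan how to recover it. The approach is a Perron-vector argument applied to the quadratic $A^2-(r-1)A$. Let $x$ be a positive unit Perron vector of $A=A(G)$. I will reduce the claim to the quadratic inequality
\[
\lambda^2-(r-1)\lambda\le 2e(G)-nr+r .
\]
Solving this for the larger root gives exactly $\lambda\le \frac{r-1+\sqrt{(r+1)^2+4(2e(G)-nr)}}{2}$, because $(r-1)^2+4r=(r+1)^2$. We may assume $G$ is connected (otherwise pass to a component attaining $\lambda$ and check that the bound only grows), so $x>0$.

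First, fix a vertex $u$ and expand $(\lambda^2-(r-1)\lambda)x_u=((A^2-(r-1)A)x)_u$. This equals
\[
d_ux_u+\sum_{w\ne u}\bigl(|N(u)\cap N(w)|-(r-1)a_{uw}\bigr)x_w .
\]
I will regroup the coefficients, attaching to each $w\neq u$ the quantity $d_w-r$, so that all coefficients sum to
\[
d_u+\sum_{w\ne u}(d_w-r)=2e(G)-r(n-1).
\]
Each $d_w-r$ is nonnegative by $\delta(G)\ge r$. If the expression can be dominated by a nonnegative combination with these total weights, then evaluating at $u$ with $x_u=\max_w x_w$ and using $x_w\le x_u$ yields the inequality above.

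For neighbours $w$ of $u$ this comparison is direct: $|N(u)\cap N(w)|\le d_w-1$, so the coefficient is at most $d_w-r$. The main obstacle is the non-neighbours. There $|N(u)\cap N(w)|$ can exceed $d_w-r$ pointwise, and coefficients for neighbours may even be negative, so a naive $x_w\le x_u$ bound loses too much; it gives only $2e-rd_u$.

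To handle this, I would try two routes. The first is to not bound $\sum_{v\sim u}\sum_{w\sim v,w\ne u}x_w$ termwise, but to rewrite the inner sums as $\lambda x_v-x_u$. I would then combine this with a global estimate: summing $d_w x_w$ over all $w$ and comparing with $\lambda\sum x_w$. This transfers the excess at non-neighbours onto the slack $d_w-r$ available at other vertices. Failing that, I would follow Nikiforov's route. That route applies the Rayleigh quotient to $A^2$, using $\langle A^2x,x\rangle=\|Ax\|^2$ and $\sum_{uv\in E}(x_u+x_v)^2\le$ bounds expressed through $e(G)$ and $\delta$. This proves the same quadratic inequality without any pointwise coefficient comparison.
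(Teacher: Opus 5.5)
There is a genuine gap. You set up the right target, $\lambda^2-(r-1)\lambda\le 2e(G)-(n-1)r$. You also explain correctly why evaluating $(A^2-(r-1)A)\mathbf{x}=(\lambda^2-(r-1)\lambda)\mathbf{x}$ at a vertex $u$ with $x_u$ maximal cannot be completed: the excess $|N(u)\cap N(w)|>d_w-r$ at non-neighbours would have to be paid for by the negative coefficients at neighbours, and $x_w\le x_u$ points the wrong way for those. But neither repair is an actual argument.

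In Route 1, substituting $\sum_{w\sim v,\,w\ne u}x_w=\lambda x_v-x_u$ just turns the expansion back into $\lambda^2x_u=\lambda^2x_u$. All the content is deferred to a ``global estimate'' that you never state, and the framework is still the evaluation at the maximum coordinate.

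Route 2 is only a description, and the natural way to make it precise fails. Cauchy--Schwarz on $\|A\mathbf{x}\|^2$ gives $\lambda^2\le\sum_u x_u^2\sum_{v\sim u}d_v\le 2e(G)-(n-1)r+(r-1)\sum_u d_ux_u^2$. For $r\ge 2$ this would need $\sum_u d_ux_u^2\le\lambda$, whereas $\sum_u d_ux_u^2=\lambda+\sum_{uv\in E}(x_u-x_v)^2\ge\lambda$ for a unit vector. More generally, no bound on $\mathbf{y}^TB\mathbf{y}$ over arbitrary unit $\mathbf{y}$, with $B=A^2-(r-1)A$, can work. For $K_{r,r}$ with $r\ge 2$, the largest eigenvalue of $B$ is $2r^2-r$, while $2e(G)-(n-1)r=r$. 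So the nonnegativity of the Perron vector must be used essentially.

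The missing idea is to average over all vertices rather than evaluate at one, i.e.\ to apply $\mathbf{1}^T$ to the eigen-equation. Because $B$ is symmetric, $\mathbf{1}^TB\mathbf{x}=\sum_w R_wx_w$, where $R_w=\sum_{v\sim w}d_v-(r-1)d_w$ is the $w$-th row sum of $B$. Since every vertex outside $N[w]$ has degree at least $r$,
\[
R_w\le 2e(G)-d_w-(n-1-d_w)r-(r-1)d_w=2e(G)-(n-1)r .
\]
Hence $(\lambda^2-(r-1)\lambda)\sum_w x_w=\sum_w R_wx_w\le(2e(G)-(n-1)r)\sum_w x_w$, and dividing by $\sum_w x_w>0$ gives the quadratic inequality. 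Equivalently, combine $\lambda^2\sum_w x_w=\sum_w x_w\sum_{v\sim w}d_v$ with $\sum_w d_wx_w=\lambda\sum_w x_w$. This is the identity you mention in Route 1, but used instead of the max-coordinate evaluation, not alongside it.

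The negative entries of $B$ then never need to be controlled, because $\lambda^2-(r-1)\lambda$ is a weighted average of the row sums of $B$ with weights $x_w\ge 0$. This needs only a nonnegative eigenvector, so your reduction to connected $G$ is unnecessary. (The paper itself gives no proof of this lemma; it is quoted from Hong, Shu and Fang and from Nikiforov.)
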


\begin{Lemma} {\em (Wei et al. \cite{WYSL22})}\label{le0-1}
Let $r, a_1, a_2,\ldots, a_r$ be integers with $r \ge 2$
and $0 \le a_1\le a_2\le\dots\le a_r$, and let $f(x_1,x_2,\dots,x_r)=\sum_{i=1}^r \binom{x_i}{2} $
be an integral function with $\sum_{i=1}^r x_i=n$. If $a_i\le x_i$ holds  for any $i\in\{1,2,\ldots,r\}$, then
\begin{align*}
f(x_1,x_2,\dots,x_r)\le \binom{n-\sum_{i=1}^{r-1} a_i}{2}+\sum_{i=1}^{r-1} \binom{a_i}{2},
\end{align*}
where  the equality holds if and only if $x_i=a_i$ for any  $i\in\{1,2,\ldots, r-1\}$ and $x_r=n-\sum_{i=1}^r a_i$.
\end{Lemma}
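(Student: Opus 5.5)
The plan is an exchange argument: first push the function toward configurations in which all but one coordinate sit at their lower bounds, then decide which coordinate should carry the excess. Throughout I write $x_i=a_i+d_i$ with $d_i\ge 0$ and $N=n-\sum_{i=1}^r a_i\ge 0$, so that $\sum_i d_i=N$. The intended extremal point is $x_i=a_i$ for $i<r$ and $x_r=a_r+N=n-\sum_{i=1}^{r-1}a_i$. I read the equality condition in the statement with this value of $x_r$; the printed $n-\sum_{i=1}^{r}a_i$ appears to be a typo.

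\emph{Step 1 (concentrating the excess).} Suppose two indices $i\neq j$ satisfy $d_i>0$ and $d_j>0$, and label them so that $x_i\le x_j$. Replace $(x_i,x_j)$ by $(x_i-1,x_j+1)$. This keeps the sum equal to $n$ and keeps $x_i-1\ge a_i$. The change in $f$ is
\[
\Bigl[\tbinom{x_j+1}{2}-\tbinom{x_j}{2}\Bigr]-\Bigl[\tbinom{x_i}{2}-\tbinom{x_i-1}{2}\Bigr]=x_j-(x_i-1)\ge 1>0 .
\]
Repeating this strictly increases $f$ and strictly decreases the number of indices with positive excess, so the process terminates. Hence any maximizer has at most one index $j$ with $d_j>0$. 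Since the value of $f$ does not depend on the order of the $x_i$ once the $a_i$ are fixed, $f$ is then at most
\[
F_j:=\sum_{i\ne j}\tbinom{a_i}{2}+\tbinom{a_j+N}{2}\quad\text{for some } j .
\]
When $N=0$ all $F_j$ coincide with the claimed bound.

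\emph{Step 2 (choosing $j=r$).} Compare $F_j$ with $F_r$ for $j<r$:
\[
F_r-F_j=\Bigl[\tbinom{a_r+N}{2}-\tbinom{a_r}{2}\Bigr]-\Bigl[\tbinom{a_j+N}{2}-\tbinom{a_j}{2}\Bigr]=N(a_r-a_j)\ge 0,
\]
using $\binom{a+N}{2}-\binom{a}{2}=aN+\binom{N}{2}$ and $a_j\le a_r$. Thus $f\le F_r$, and $F_r$ is exactly $\binom{n-\sum_{i<r}a_i}{2}+\sum_{i<r}\binom{a_i}{2}$.

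\emph{Step 3 (equality).} Equality forces three things. First, no move of Step 1 is ever available, since each move is a strict gain. Second, the excess sits at an index $j$ with $F_j=F_r$. Third, $N(a_r-a_j)=0$. If $a_j<a_r$ and $N>0$, this forces $j=r$, giving $x_i=a_i$ for $i<r$ and $x_r=n-\sum_{i<r}a_i$. The main point to handle carefully is this equality case. When $a_j=a_r$ or $N=0$, the configurations obtained by swapping the roles of $j$ and $r$ attain the same value, and they are the same point only up to permuting the equal bounds $a_j=a_r$ (for $N=0$ the extremal point is even unique). I would therefore state uniqueness up to this relabeling, which is how the lemma will be applied (with distinct or relabelable part sizes). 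Everything else is the routine calculation shown above.
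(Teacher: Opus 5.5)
The paper does not prove this lemma; it quotes it from \cite{WYSL22}, so there is no in-paper argument to compare against. Your exchange argument is correct and gives a complete, self-contained proof. Your Step~1 move is exactly the inequality the paper records as Lemma~\ref{le0-2}. The identity $\binom{a+N}{2}-\binom{a}{2}=aN+\binom{N}{2}$ in Step~2 is what shows the excess belongs at the largest lower bound.

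One small repair is needed in Step~1. A single move lowers $d_i$ by one but need not make it zero, so it does not by itself reduce the number of indices with positive excess. You can fix this in either of two ways:
\begin{enumerate}
\item keep moving units from $i$ to $j$ until $d_i=0$, noting that the inequality $x_i\le x_j$ persists throughout; or
\item observe that the feasible set is finite, so a maximizer exists, and a maximizer admits no strict-gain move.
\end{enumerate}

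Your reading of the equality clause is right on both counts.
\begin{itemize}
\item The printed $x_r=n-\sum_{i=1}^{r}a_i$ should be $n-\sum_{i=1}^{r-1}a_i$.
\item The ``only if'' genuinely fails when there are ties. For example, take $r=2$, $a_1=a_2=1$, $n=3$: both $(1,2)$ and $(2,1)$ attain the bound $1$.
\end{itemize}
What your Step~3 actually proves is the following: equality holds if and only if, for some $j$ with $a_j=a_r$, we have $x_i=a_i$ for all $i\ne j$ and $x_j=a_j+N$. Hence the stated uniqueness holds exactly when $N=0$ or $a_{r-1}<a_r$.

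Your sentence ``If $a_j<a_r$ and $N>0$, this forces $j=r$'' should be rephrased this way, since $a_j<a_r$ already excludes $j=r$. Say instead that when $N>0$ the excess index must satisfy $a_j=a_r$.

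None of this affects the paper. It only ever uses the inequality, either directly or via Corollary~\ref{21c}, for instance in \eqref{spectral-spanning-tree-eq-add1} and in Case~2 of Claim~\ref{spectral-spanning-tree-claim3}. In every application all lower bounds equal $k+2$.
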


\begin{Lemma} {\em (Fan et al. \cite{FaGL2023})}\label{le0-2}
Let $a$ and $b$ be two positive integers. If $a\ge b$,
then $\binom{a}{2}+\binom{b}{2}<\binom{a+1}{2}+\binom{b-1}{2}$.
\end{Lemma}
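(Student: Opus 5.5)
The statement in Lemma~\ref{le0-2} is an elementary inequality between binomial coefficients, so the plan is simply to compute the difference of the two sides explicitly and check its sign. No spectral or structural input is needed. I would use only the identity $\binom{m+1}{2}-\binom{m}{2}=m$, valid for every integer $m\ge 0$, which follows from $\binom{m+1}{2}=\frac{(m+1)m}{2}$ and $\binom{m}{2}=\frac{m(m-1)}{2}$.

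First, apply the identity with $m=a$ to get $\binom{a+1}{2}-\binom{a}{2}=a$. Next, apply it with $m=b-1$; this is legitimate because $b\ge 1$ gives $b-1\ge 0$, and it yields $\binom{b}{2}-\binom{b-1}{2}=b-1$. Subtracting these, the right-hand side minus the left-hand side of the claimed inequality is
\[
\Bigl(\tbinom{a+1}{2}+\tbinom{b-1}{2}\Bigr)-\Bigl(\tbinom{a}{2}+\tbinom{b}{2}\Bigr)=a-(b-1)=a-b+1.
\]
Finally, the hypothesis $a\ge b$ gives $a-b+1\ge 1>0$, which is the strict inequality claimed.

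There is no genuine obstacle. The only point needing care is the boundary case $b=1$, where $\binom{b-1}{2}=\binom{0}{2}=0$. The identity still holds there with $m=0$, so the computation needs no separate case.
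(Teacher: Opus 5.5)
Your proof is correct and complete. The difference of the two sides is
\[
\Bigl[\tbinom{a+1}{2}-\tbinom{a}{2}\Bigr]-\Bigl[\tbinom{b}{2}-\tbinom{b-1}{2}\Bigr]=a-(b-1)\ge 1,
\]
and the case $b=1$ needs no separate treatment, as you observed.

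The paper gives no proof of this lemma; it only quotes it from Fan et al. Your direct computation is the standard verification one would supply. It also shows that the strict inequality holds exactly when $a\ge b$.
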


From Lemmas \ref{le0-1} and \ref{le0-2}, it easily follows that
\begin{Corollary}  \label{21c}
Let $r, a_1, a_2,\ldots, a_r$ be  integers with $r \ge 2$ and $0 \le a_1\le a_2\le\dots\le a_r$. Let $f(x_1,x_2,\dots,x_r)=\sum_{i=1}^r \binom{x_i}{2} $
be an integral function with $\sum_{i=1}^r x_i=n$. If $a_i\le x_i$ holds  for any $i\in\{1,2,\ldots,r\}$, then
\begin{align*}
f(x_1,x_2,\dots,x_r)\le \binom{n-\sum_{i=1}^{r-1} a_i}{2}+\sum_{i=1}^{r-1} \binom{a_i}{2}\le \binom{n-r+1}{2}.\end{align*}
\end{Corollary}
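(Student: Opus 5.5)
The first inequality is exactly Lemma~\ref{le0-1}, so all the work is in the second inequality
\[
\binom{n-\sum_{i=1}^{r-1}a_i}{2}+\sum_{i=1}^{r-1}\binom{a_i}{2}\le \binom{n-r+1}{2}.
\]
My plan is to obtain it by repeatedly shifting one unit from a small part to the large part, using Lemma~\ref{le0-2} at each shift.

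I will assume $a_i\ge 1$ for $1\le i\le r-1$, which is the situation in which the corollary is applied (parts of a partition are nonempty). Some such assumption is genuinely needed. If some $a_i=0$, the left side can exceed $\binom{n-r+1}{2}$: taking all $a_i=0$ for $i<r$ gives $\binom{n}{2}$.

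Set $A=\sum_{i=1}^{r-1}a_i$ and $m=n-A$. First I check that the large part dominates every small one. Since $x_i\ge a_i$ for every $i$,
\[
m=n-A\ \ge\ n-\sum_{i=1}^{r-1}x_i\ =\ x_r\ \ge\ a_r\ \ge\ a_i
\]
for every $i$.

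Now choose any $i\le r-1$ with $a_i\ge 2$. Apply Lemma~\ref{le0-2} with $a=m$ and $b=a_i$; both are positive and $a\ge b$. This gives
\[
\binom{m}{2}+\binom{a_i}{2}<\binom{m+1}{2}+\binom{a_i-1}{2}.
\]
So replacing $a_i$ by $a_i-1$ (and hence $m$ by $m+1$) strictly increases $\binom{m}{2}+\sum_{j<r}\binom{a_j}{2}$.

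The dominance condition $m\ge a_j$ is preserved, since $m$ grows while each $a_j$ weakly shrinks. I therefore iterate until $a_i=1$ for all $i\le r-1$. At that point $m=n-(r-1)$ and every $\binom{a_i}{2}=0$, so the expression equals $\binom{n-r+1}{2}$. Since every step only increased it, the original expression is at most $\binom{n-r+1}{2}$, with equality exactly when all $a_i=1$ for $i<r$.

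The only delicate point is verifying the hypothesis $a\ge b$ of Lemma~\ref{le0-2} throughout the iteration, which the dominance argument above handles.
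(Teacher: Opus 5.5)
Your proof is correct and follows the route the paper indicates (``from Lemmas~\ref{le0-1} and~\ref{le0-2} it easily follows''): the first inequality is Lemma~\ref{le0-1}, and the second comes from repeatedly moving a unit from some $a_i\ge 2$ into the large part. Lemma~\ref{le0-2} makes each such move strictly increasing because $n-\sum_{i<r}a_i\ge x_r\ge a_r\ge a_i$. You are also right that the extra hypothesis $a_1\ge 1$ is genuinely needed: with $a_1=\cdots=a_{r-1}=0$ the middle expression is $\binom{n}{2}>\binom{n-r+1}{2}$, so the corollary as printed is false, but $a_1\ge 1$ holds in every application in the paper, where the lower bounds are $1$ or $k+2$.
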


Let $u, v$ be two distinct vertices  of graph $G$.  Then, $N_G(v)$ and $N_G[v]$ denotes the neighbor set and close neighbor set of $v$, respectively.
Define $G(u,v)$ to be the graph obtained from $G$ by replacing all edges $vw$ by $uw$ for each $w\in N_G(v)\backslash N_G[u]$.
This operation that converts $G$ to $G(u,v)$ is called the {\bf Kelmans transformation} \cite{Kelm81}.   In \cite{Csik09}, Csikv\'{a}ri showed that $
\lambda(G)\le \lambda(G')$ if $G'$ is the graph obtained from $G$ by some Kelmans transformation. Recently, Liu et al. \cite{Mu2019} improved this result slightly.

\begin{Lemma}  \label{le2.1}
Let $G$ be a connected  graph and $G'$ be the graph obtained from $G$ by some Kelmans transformation.
 Then each of the following holds:
\\
$(i)$\,{\em (Csikv\'{a}ri \cite{Csik09})} $\lambda(G)\le \lambda(G')$;
$(ii)$\,{\em(Liu et al. \cite{Mu2019})} if $G\ne G'$, then
$\lambda(G)<\lambda(G').$
\end{Lemma}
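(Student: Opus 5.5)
The plan is a Rayleigh-quotient comparison using the Perron vector of $G$. The one non-obvious step is choosing which of $u,v$ receives the edges, and this is resolved by a symmetry of the transformation.

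\emph{Reduction to a favourable orientation.} Set $S=N_G(v)\setminus N_G[u]$. If $S=\emptyset$ then $G'=G$ and there is nothing to prove, so assume $S\neq\emptyset$. The key observation is that $G(u,v)\cong G(v,u)$. In $G(u,v)$, vertex $u$ is adjacent to $N_G(u)\cup N_G(v)$ (minus $u,v$ themselves, plus $v$ if $uv\in E(G)$). Vertex $v$ is adjacent to $N_G(u)\cap N_G(v)$, plus $u$ if $uv\in E(G)$. In $G(v,u)$ the same description holds with the roles of $u$ and $v$ swapped, and all other adjacencies are unchanged. Hence the transposition $u\leftrightarrow v$ is an isomorphism between the two graphs, so they have the same spectral radius. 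Let $x$ be the positive unit Perron vector of $A(G)$; it exists because $G$ is connected. Relabelling if necessary, we may therefore assume $x_u\ge x_v$.

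\emph{Rayleigh comparison.} Passing from $G$ to $G'$ deletes the edges $vw$ and adds the edges $uw$ for $w\in S$. By construction none of the added edges is already present. Therefore
\[
x^{T}A(G')x-x^{T}A(G)x=2\sum_{w\in S}x_w\,(x_u-x_v)\ge 0 .
\]
By the Rayleigh principle, $\lambda(G')\ge x^{T}A(G')x\ge x^{T}A(G)x=\lambda(G)$, which proves (i).

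\emph{Strictness (ii).} Suppose $G\neq G'$, so $S\neq\emptyset$, and suppose $\lambda(G')=\lambda(G)$. Then every inequality in the chain above is an equality. In particular $x$ attains the maximum of the Rayleigh quotient of the symmetric matrix $A(G')$, so $x$ is an eigenvector: $A(G')x=\lambda(G)x$. Compare the eigen-equations at the vertex $v$, whose neighbourhood in $G'$ is $N_G(v)\setminus S$:
\[
\lambda(G)x_v=\sum_{w\in N_{G'}(v)}x_w=\sum_{w\in N_G(v)}x_w-\sum_{w\in S}x_w=\lambda(G)x_v-\sum_{w\in S}x_w .
\]
This forces $\sum_{w\in S}x_w=0$, which is impossible because $x$ is strictly positive and $S\neq\emptyset$. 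Hence $\lambda(G)<\lambda(G')$.

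The only delicate point is the orientation issue, which the isomorphism $G(u,v)\cong G(v,u)$ disposes of. Note also that the strictness argument requires only the positivity of the Perron vector of $G$, not the connectivity of $G'$, so no further case analysis is needed.
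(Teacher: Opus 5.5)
Your argument for (i) is correct; it is essentially Csikv\'ari's original proof, and the paper gives no proof of its own, only the citations. The gap is in (ii), and it sits at the orientation step you describe as resolved. The isomorphism $G(u,v)\cong G(v,u)$ preserves the spectral radius of $G'$. It does not preserve the hypothesis your strictness argument needs. After relabelling so that $x_u\ge x_v$, the set of moved edges is $S'=N_G(u)\setminus N_G[v]$, not $S$. The set $S'$ can be empty while $S\neq\emptyset$; then $G(v,u)=G$ and your chain of inequalities is an equality.

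In fact the statement you set out to prove, with $G\neq G'$ meaning $S\neq\emptyset$, is false. Take $G=P_3$ with edges $uv,vw$. Then $S=\{w\}$ and $G(u,v)$ has edges $uv,uw$, so $G(u,v)\neq G$. Yet $G(u,v)\cong P_3$, and both spectral radii equal $\sqrt2$. Here $x_v=\sqrt2\,x_u>x_u$, so your relabelling sends you to $G(v,u)=G$. More generally, whenever $N_G(u)\setminus\{v\}\subsetneq N_G(v)\setminus\{u\}$, the graph $G(u,v)$ is just $G$ with the labels $u,v$ exchanged.

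What is missing is the correct hypothesis plus one extra observation. Part (ii) must be read as ``if $G'\ncong G$''. This is how the paper actually invokes it, e.g.\ with ``$G(u_i,u_j)\ncong G$'' in the proof of Theorem~\ref{spectral-spanning-tree-thm1}. Now use the neighbourhood description you already wrote down. If $S'=\emptyset$, then $G(u,v)$ is exactly the image of $G$ under the transposition $u\leftrightarrow v$, so $G(u,v)\cong G$. Hence under $G'\ncong G$ both $S$ and $S'$ are nonempty. Whichever orientation the Perron vector dictates, the moved set is nonempty, and your eigen-equation argument at the vertex losing edges gives $\lambda(G)<\lambda(G')$ verbatim.
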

As $A(G)$ is an irreducible nonnegative matrix for a connected graph $G$, by the Perron-Frobenius Theorem, $A(G)$ has a positive eigenvector $\mathbf x $ corresponding to  $\lambda(G)$, which is called the {\bf Perron vector} of $A(G)$.
For any $v\in V(G)$, let $x_v$ be the coordinate in $\mathbf x$   corresponding to vertex $v$.

\begin{Lemma}{\em (Wu et al. \cite{WuXY05})} \label{le2.2}
 Let $G$ be a connected graph, and  $u,v$ be two vertices of $G$.
Suppose that $v_1,v_2,\ldots,v_s \in N_G(v)\setminus N_G[u]$
with $s\ge 1$, and let $G^{*}$ be the graph obtained from $G$ by deleting the edges
$vv_i$ and adding the edges $uv_i$ for $1\le i\le s$.
Let $\mathbf x $ be the Perron vector of $A(G)$.
If $x_u\ge x_v$, then $\lambda(G)<\lambda(G^{*})$.
\end{Lemma}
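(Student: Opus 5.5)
The plan is a Rayleigh quotient comparison followed by an eigen-equation argument for strictness. Let $\mathbf x$ be the unit Perron vector of $A(G)$, so $\mathbf x>0$ and $\lambda(G)=\mathbf x^{T}A(G)\mathbf x$. Since $A(G^{*})$ is real symmetric, $\lambda(G^{*})=\max_{\|\mathbf y\|=1}\mathbf y^{T}A(G^{*})\mathbf y$, whether or not $G^{*}$ is connected. This gives
\[
\lambda(G^{*})\ge \mathbf x^{T}A(G^{*})\mathbf x .
\]

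First I compute the change in the quadratic form. The edges $vv_i$ are removed and the edges $uv_i$ are added, for $1\le i\le s$. The new edges really are new, since $v_i\notin N_G[u]$, so no multiple edges or loops arise. Each edge $ab$ contributes $2x_ax_b$ to the form, hence
\[
\mathbf x^{T}A(G^{*})\mathbf x-\mathbf x^{T}A(G)\mathbf x
=2\sum_{i=1}^{s}x_{v_i}\,(x_u-x_v)\ge 0,
\]
using $x_u\ge x_v$ and $x_{v_i}>0$. Therefore $\lambda(G^{*})\ge \mathbf x^{T}A(G^{*})\mathbf x\ge \lambda(G)$.

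The main point is strictness. Suppose $\lambda(G^{*})=\lambda(G)$. Then both inequalities above are equalities, so $\mathbf x$ attains the maximum of the Rayleigh quotient of $A(G^{*})$. By the standard property of symmetric matrices, $\mathbf x$ is then an eigenvector of $A(G^{*})$ for the eigenvalue $\lambda(G^{*})=\lambda(G)$. I will compare the eigen-equations of $A(G)$ and $A(G^{*})$ at the vertex $v$. Since $N_{G^{*}}(v)=N_G(v)\setminus\{v_1,\dots,v_s\}$, we get
\[
\lambda(G)x_v=\sum_{w\in N_{G^{*}}(v)}x_w=\sum_{w\in N_{G}(v)}x_w-\sum_{i=1}^{s}x_{v_i}=\lambda(G)x_v-\sum_{i=1}^{s}x_{v_i}.
\]
This forces $\sum_{i=1}^{s} x_{v_i}=0$, which contradicts $s\ge1$ and the positivity of $\mathbf x$. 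Hence $\lambda(G)<\lambda(G^{*})$.

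The only delicate step is justifying that equality in the Rayleigh bound makes $\mathbf x$ an eigenvector of $A(G^{*})$. I would prove it by expanding $\mathbf x$ in an orthonormal eigenbasis of $A(G^{*})$: equality forces all of its weight onto eigenvectors for the largest eigenvalue. This argument does not need connectivity of $G^{*}$ or Perron--Frobenius for $G^{*}$.
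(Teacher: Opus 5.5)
Your proof is correct and complete. The paper gives no proof of this lemma (it is quoted from Wu et al.), and your argument is the standard proof of that result: the Rayleigh bound $\lambda(G^{*})\ge \mathbf x^{T}A(G^{*})\mathbf x=\lambda(G)+2(x_u-x_v)\sum_i x_{v_i}$, then strictness by noting that equality would make $\mathbf x$ an eigenvector of $A(G^{*})$, contradicted by the eigen-equation at a vertex whose neighbourhood changed (you use $v$; $u$ works equally well, and $u\neq v$ is automatic from $s\ge 1$).
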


We next establish several lemmas that will be used in the proof of Theorem~\ref{spectral-spanning-tree-thm1}.

\begin{Definition}\label{EDST-connect-def1}
Let $G_1=K_{n-2s+2k-2}$ and $G_2=(2s-2k+2)K_1$  be two vertex-disjoint graphs, where $k$, $s$ and $n$ are positive integers satisfying that $n\ge 3s-2k+3$ and $2\le k\le s\le 2k-2$. Let
$\mathcal{H}_1(s,k)=\{G: G_1+G_2\subseteq G \text{~and~} \delta(G)\ge s\}.$
\end{Definition}

\begin{Lemma}\label{EDST-connect-le1}
Every graph in $\mathcal{H}_1(s,k)$ is $s$-edge-connected.
\end{Lemma}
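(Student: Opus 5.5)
We argue directly from the definition of edge connectivity. Let $G\in\mathcal{H}_1(s,k)$, and write $A=V(G_1)$ with $|A|=n-2s+2k-2$ and $B=V(G_2)$ with $|B|=2s-2k+2$. Since $G_1+G_2\subseteq G$, the vertices of $A$ induce a clique in $G$. Let $\emptyset\neq X\subsetneq V(G)$ and put $Y=V(G)\setminus X$. We must show that the number of edges between $X$ and $Y$, denoted $e_G(X,Y)$, is at least $s$.

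First suppose that $A$ meets both $X$ and $Y$. Let $a=|A\cap X|$, so that $1\le a\le |A|-1$. The clique on $A$ alone contributes at least
\[
a(|A|-a)\ge |A|-1=n-2s+2k-3
\]
edges to the cut. The hypothesis $n\ge 3s-2k+3$ gives $n-2s+2k-3\ge s$, so this case is done.

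Otherwise $A$ lies entirely on one side; by symmetry $A\subseteq Y$, so $X\subseteq B$. Let $t=|X|$, so $1\le t\le 2s-2k+2$. Each vertex of $X$ has degree at least $s$, and at most $t-1$ of its neighbours lie inside $X$. Hence
\[
e_G(X,Y)\ge t(s-t+1).
\]
This expression is concave in $t$, so on the interval $1\le t\le 2s-2k+2$ it is minimised at an endpoint. At $t=1$ it equals $s$. At $t=2s-2k+2$ it equals $(2s-2k+2)(2k-s-1)$. The condition $s\le 2k-2$ gives $2k-s-1\ge 1$, so setting $m=2k-s-1\ge1$ this endpoint value is $(2s-2k+2)m$.

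It remains to check that $(2s-2k+2)m\ge s$. Write $c=s-k\ge0$, so that $m=k-c-1$ and $s=k+c$. Then
\[
(2c+2)(k-c-1)-(k+c)=(2c+1)k-2c^2-5c-2=(2c+1)(k-c-2)\ge0,
\]
where the last step uses $c\le k-2$, which is exactly $s\le 2k-2$. So $t(s-t+1)\ge s$ throughout the range of $t$.

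This completes the argument, and it has no serious obstacle. The one step that needs care is the second endpoint estimate, because that is where the hypothesis $s\le 2k-2$ is used. The case split (whether or not $A$ is cut) is forced by the two different hypotheses: the first case uses the lower bound on $n$, and the second uses the minimum-degree condition together with $s\le 2k-2$.
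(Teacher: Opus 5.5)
Your proof is correct and follows the paper's argument: the same case split on whether the clique $V(G_1)$ is cut (using $n\ge 3s-2k+3$), and the same minimum-degree bound $t(s-t+1)$ when it is not. The only difference is cosmetic. The paper uses $s\le 2k-2$ to get $t\le 2s-2k+2\le s$, so both endpoints of $[1,s]$ give exactly $s$. You instead check the endpoint $t=2s-2k+2$ directly via $(2c+1)(k-c-2)\ge 0$.
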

\p By contradiction, assume  that there exists $G\in \mathcal{H}_1(s,k)$ such that $\kappa'(G)\le s-1$.
Choose a nonempty proper subset $V_1\subseteq V(G)$ such that
$e(V_1,V_2) = \kappa'(G)\le s-1$, where $V_2=V(G)\setminus V_1$.
  Then we have the following result.

\begin{Claim}\label{EDST-calim1}
Either $V_1\cap V(G_1)=\emptyset$ or $V_2\cap V(G_1)=\emptyset$
\end{Claim}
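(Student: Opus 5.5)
We argue by contradiction, in the setting fixed above: $G\in\mathcal{H}_1(s,k)$, and $(V_1,V_2)$ is a partition of $V(G)$ with $e(V_1,V_2)=\kappa'(G)\le s-1$. Suppose that both $V_1\cap V(G_1)$ and $V_2\cap V(G_1)$ are nonempty. Write $m=|V(G_1)|=n-2s+2k-2$ and $a=|V_1\cap V(G_1)|$, so that $1\le a\le m-1$. Since $G_1=K_m$ is a subgraph of $G$, every vertex of $V_1\cap V(G_1)$ is adjacent to every vertex of $V_2\cap V(G_1)$. This gives
\[
e(V_1,V_2)\ \ge\ e_{G_1}\bigl(V_1\cap V(G_1),\,V_2\cap V(G_1)\bigr)\ =\ a(m-a).
\]

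The next step is to bound $a(m-a)$ from below. The function $a(m-a)$ is concave in $a$, so on the range $1\le a\le m-1$ it is minimized at an endpoint. Hence $a(m-a)\ge m-1=n-2s+2k-3$. The hypothesis $n\ge 3s-2k+3$ then gives
\[
n-2s+2k-3\ \ge\ (3s-2k+3)-2s+2k-3\ =\ s.
\]
Combining the two displays yields $e(V_1,V_2)\ge s$. This contradicts $e(V_1,V_2)\le s-1$, so one of $V_1,V_2$ must miss $V(G_1)$ entirely, which is the claim.

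No step here is a real obstacle. The only point needing care is the arithmetic check that the assumed lower bound $n\ge 3s-2k+3$ is exactly what makes $m-1\ge s$. Neither the minimum-degree condition $\delta(G)\ge s$ nor any structure of $G_2$ is needed for this claim. Those are used afterwards, when one shows that the part avoiding $G_1$ consists only of vertices outside $G_1$ and derives a contradiction from the degree condition.
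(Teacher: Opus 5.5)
Your proof is correct and follows the paper's argument: you bound $e(V_1,V_2)$ below by $a(m-a)$, the product of the sizes of the two pieces of the clique $K_{n-2s+2k-2}$, note that its minimum over $1\le a\le m-1$ is $m-1=n-2s+2k-3$, and use $n\ge 3s-2k+3$ to get at least $s$, contradicting $e(V_1,V_2)\le s-1$. You are also right that neither $\delta(G)\ge s$ nor the structure of $G_2$ is needed for this claim.
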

\p
On the contrary, suppose that both $V_1\cap V(G_1)\neq \emptyset$ and $V_2\cap V(G_1)\neq \emptyset$. Let $|V_1\cap V(G_1)|=t_1>0$ and $|V_2\cap V(G_1)|=t_2>0$.
Then we have $t_1+t_2=n-2s+2k-2$ and $e(V_1,V_2)\ge e(V_1\cap V(G_1),V_2\cap V(G_1))=t_1t_2$. Since $t_1>0$, $t_2>0$ and $t_1+t_2=n+2k-2s-2$, it follows that
$1\le t_1 \le n-2s+2k-3$.  By algebraic manipulations, we have  $t_1t_2=t_1(n+2k-2s-2-t_1)$, which has a minimum value $n-2s+2k-3$ with $1\le t_1 \le n-2s+2k-3$.  Since $n\ge 3s-2k+3$,  it follows that
$e(V_1,V_2)\ge t_1t_2\ge n+2k-2s-3 \ge s$, a contradiction. This proves Claim \ref{EDST-calim1}.
\q

By Claim \ref{EDST-calim1}, we have either $V(G_1)\subseteq V_2$ or $V(G_1)\subseteq V_1$. Without loss of generality, we assume that  $V(G_1)\subseteq V_2$.  As $V(G_1)\cup V(G_2)=V_1\cup V_2=V(G)$,  we have $V_1\subseteq V(G_2)$  and $V(G_2)\cap V_1\neq \emptyset$. Let $|V(G_2)\cap V_1|=t_3$.
As $V(G_2)\cap V_1\neq \emptyset$ and $s\le 2k-2$,
we have $1\le t_3 = |V(G_2)\cap V_1|\le |V(G_2)| = 2s-2k+2 \le s$.
As $\delta(G)\ge s$, we have $e(V_1,V_2)= e(V_1\cap V(G_2),V_2)\ge \delta(G)\times |V(G_2)\cap V_1| - 2\binom{t_3}{2} \ge st_3- t_3^2+t_3$, which has a minimum value $s$, a contradiction.
Hence, every graph in $\mathcal{H}_1(s,k)$ is $s$-edge-connected. This proves  Lemma \ref{EDST-connect-le1}.
\q

\begin{Lemma}\label{EDST-connect-le2}
Let $G\in \mathcal{H}_1(s,k) $ and  $n\ge 3s^2$. If
\begin{align*}
 e(G)=\binom{n-2s+2k-2}{2}+2k(s-k+1)-1,
\end{align*}
then $n-2s+2k-3<\lambda(G)<n-2s+2k-2$.
\end{Lemma}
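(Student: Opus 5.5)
Write $m=n-2s+2k-2$, so $G_1=K_m\subseteq G$. The plan is to get the lower bound from the clique and the upper bound from Lemma~\ref{le3-1} with $r=s$, using the prescribed edge count.

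\emph{Lower bound.} Since $G_1=K_m$ is a subgraph of $G$, we have $\lambda(G)\ge m-1$. To get strict inequality, note that $G$ is connected: by Lemma~\ref{EDST-connect-le1} it is $s$-edge-connected. Also $G\neq K_m$, because $G$ has $2s-2k+2\ge 2$ further vertices. For a connected graph, the spectral radius of a proper subgraph is strictly smaller by Perron--Frobenius. Hence $\lambda(G)>\lambda(K_m)=m-1=n-2s+2k-3$.

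\emph{Upper bound.} Since $\delta(G)\ge s$, Lemma~\ref{le3-1} with $r=s$ gives
\[
\lambda(G)\le \frac{s-1+\sqrt{(s+1)^2+4(2e(G)-ns)}}{2}.
\]
So it suffices to show the right-hand side is less than $m$. This is equivalent to
\[
(s+1)^2+4(2e(G)-ns)<(2m-s+1)^2,
\]
where $2m-s+1>0$ because $n$ is large. Substitute $2e(G)=m(m-1)+4k(s-k+1)-2$ and $n=m+2s-2k+2$. After expanding, the $4m^2$ terms cancel and the inequality reduces to something of the form
\[
A\,m > B,
\]
with $A$ and $B$ polynomials in $s,k$. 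The coefficient of $m$ on the left is $-4+4s$ and on the right is $4s-4$ from $-4ns$ against $-4m(s-1)$, so these must be tracked carefully.

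Doing this carefully is the main obstacle. Let me record the expansion. The right-hand side is $4m^2-4m(s-1)+(s-1)^2$. The left-hand side is
\[
(s+1)^2+4m^2-4m+16k(s-k+1)-8-4s(m+2s-2k+2).
\]
Subtracting, the inequality becomes
\[
0<-4m(s-1)+4m+4sm+(s-1)^2-(s+1)^2-16k(s-k+1)+8+4s(2s-2k+2),
\]
that is,
\[
0<8m-4s-16k(s-k+1)+8+8s^2-8sk+8s.
\]
Dividing by $4$ gives the sufficient condition
\[
2m>4k(s-k+1)-2s^2+2sk-s-2 .
\]
Since $k\le s$, the right-hand side is at most of order $4s^2$, roughly $O(s^2)$ with a small constant. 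Meanwhile $m=n-2s+2k-2\ge 3s^2-2s$, so $2m\ge 6s^2-4s$, which exceeds the right-hand side for $s\ge2$. I would verify the bound $4k(s-k+1)+2sk\le 4s\cdot s/\ldots$ crudely, using $k(s-k+1)\le s^2$ and $sk\le s^2$. This gives a right-hand side at most $4s^2$, hence strict inequality. Therefore $\lambda(G)<m=n-2s+2k-2$, completing the proof.
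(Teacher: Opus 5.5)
Your proof is correct and is essentially the paper's argument. The clique $K_{n-2s+2k-2}$ gives the lower bound; your appeal to connectivity via Lemma~\ref{EDST-connect-le1} just makes the strictness explicit. Lemma~\ref{le3-1} with $r=s$ gives the upper bound, and your reduced condition $2m>4k(s-k+1)-2s^2+2sk-s-2$ is exactly twice the paper's surplus $n+2k^2-3sk+s^2-\frac{3}{2}s-1>0$.

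Your side remark on the coefficients of $m$ is wrong: they are $-4-4s$ on the left and $4-4s$ on the right, which is why the net term is $8m$. The expansion you actually carry out is nonetheless correct. Strictness at the end holds because the right-hand side is at most $4s^2-s-2<4s^2\le 6s^2-4s\le 2m$.
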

\p
By Definition of $\mathcal{H}_1(s,k)$, we have $G_1=K_{n-2s+2k-2}\subsetneqq G$, and so $ \lambda(G)>\lambda(G_1)=n-2s+2k-3$.

We next prove that $\lambda(G)<n-2s+2k-2$. On the contrary, suppose that $\lambda(G)\ge n-2s+2k-2$.
By Lemma \ref{le3-1}, we have
\[
n-2s+2k-2\le \lambda(G)\leq \frac{s-1+\sqrt{{(s+1)}^2+4(2e(G)-ns)}}{2},
\]
which implies that
\[
e(G)\ge
\frac{(2n-5s+4k-3)^2-(s+1)^2+4ns}{8}.
\]
As $n\ge 3s^2$ and $2\le k\le s\le 2k-2$, we have
\begin{align*}
e(G)\ge & \frac{(2n-5s+4k-3)^2-(s+1)^2+4ns}{8}
\\
=& \binom{n-2s+2k-2}{2}+2k(s-k+1)-1 +n+2k^2-3sk+s^2-\frac{3}{2}s-1
\\
\ge & \binom{n-2s+2k-2}{2}+2k(s-k+1)-1
+(2k^2+s^2-\frac{3}{2}s-1 )
\\
> & \binom{n-2s+2k-2}{2}+2k(s-k+1)-1
=e(G),
\end{align*}
a contradiction. Hence, $n-2s+2k-3<\lambda(G)<n-2s+2k-2$.
This proves Lemma \ref{EDST-connect-le2}.\q

Finally, we establish the following lemma, which will be used in the proof of Theorem \ref{spectral-spanning-tree-thm2}.

\begin{Definition}\label{EDST-connect-def2}
Let $G_1=K_{n-2k-1}$ and $G_2=(2k+1)K_1$ be two vertex-disjoint graphs, where $n\ge 4k+1$. Define  $\mathcal{H}_2(k)$ to be the family of all graphs $G$ obtained from the disjoint union of $G_1$ and $G_2$ by adding edges within $V(G_2)$ and between $V(G_1)$ and $V(G_2)$, such that $\Delta(\overline{G}[V(G_2)])\le k$,
$e(V(G_1),V(G_2))=2k-1$ in $G$ and $d_G(v)=2k-1$ for any $v\in V(G_2)$.
\end{Definition}

For vertex $v\in V(G)$ and a subset $S\subseteq V(G)$, whether $v\in S$ or not, denote by $N_S(v)=N_G(v)\cap S$ and $d_S(v)=|N_S(v)|$. In particular, when $S=V(H)$ for some induced subgraph $H$ of $G$, we just simplify write  $d_{V(H)}(v)$ as $d_H(v)$.
\begin{Lemma}\label{EDST-thm2-le1}
Every graph in $\mathcal{H}_2(k)$ is $(2k-1)$-edge-connected.
\end{Lemma}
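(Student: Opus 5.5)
We argue by contradiction, following the proof of Lemma \ref{EDST-connect-le1}. Suppose some $G\in\mathcal{H}_2(k)$ has $\kappa'(G)\le 2k-2$. Choose a nonempty proper subset $V_1\subseteq V(G)$ with $e(V_1,V_2)=\kappa'(G)\le 2k-2$, where $V_2=V(G)\setminus V_1$.

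\textbf{Step 1: $V(G_1)$ lies on one side.} Suppose both $V_1\cap V(G_1)$ and $V_2\cap V(G_1)$ are nonempty, of sizes $t_1,t_2\ge 1$ with $t_1+t_2=n-2k-1$. Then $e(V_1,V_2)\ge t_1t_2\ge n-2k-2$. Since $n\ge 4k+1$, this gives $e(V_1,V_2)\ge 2k-1$, a contradiction. So we may assume $V(G_1)\subseteq V_2$. Then $S:=V_1\subseteq V(G_2)$ is nonempty; put $t=|S|$, so $1\le t\le 2k+1$.

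\textbf{Step 2: cut size by degree counting.} Every vertex of $S$ has degree $2k-1$, so
\[
e(S,V_2)=(2k-1)t-2e(G[S]).
\]
We bound $e(G[S])$ in two ways, according to the size of $S$.

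For small $S$, use $e(G[S])\le\binom{t}{2}$. This gives $e(S,V_2)\ge t(2k-t)$, which is at least $2k-1$ whenever $1\le t\le 2k-1$. That is a contradiction in this range.

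For large $S$, i.e. $t\in\{2k,2k+1\}$, use the hypothesis $\Delta(\overline{G}[V(G_2)])\le k$ instead.
\begin{itemize}
\item If $t=2k+1$, then $S=V(G_2)$ and $e(S,V_2)=e(V(G_2),V(G_1))=2k-1$ by the definition of $\mathcal{H}_2(k)$. This is a contradiction.
\item If $t=2k$, then $V_2=V(G_1)\cup\{w\}$ for a single vertex $w\in V(G_2)$. Now $e(S,V_2)=e(S,V(G_1))+d_S(w)$.
\end{itemize}

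\textbf{Step 3: the case $t=2k$.} Each vertex of $G_2$ has at most $k$ non-neighbours inside $G_2$, so $d_{G_2}(w)\ge (2k)-k=k$, whence $d_S(w)\ge k$. Counting the edges from $G_2$ to $G_1$,
\[
e(S,V(G_1))=2k-1-d_{G_1}(w)=2k-1-\bigl(2k-1-d_{G_2}(w)\bigr)=d_{G_2}(w)=d_S(w).
\]
Hence $e(S,V_2)=2d_S(w)\ge 2k>2k-2$, again a contradiction. This proves that every graph in $\mathcal{H}_2(k)$ is $(2k-1)$-edge-connected.

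The step I expect to need the most care is the case split in Step 2. The naive bound $t(2k-t)$ fails exactly at $t=2k,2k+1$. There the proof must switch to the fixed total $e(V(G_1),V(G_2))=2k-1$ together with the non-degree bound $\Delta(\overline{G}[V(G_2)])\le k$, which is what Step 3 does.
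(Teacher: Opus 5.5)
Your proof is correct and follows the paper's argument essentially step for step. Both use the same reduction placing $V(G_1)$ on one side via $t_1t_2\ge n-2k-2\ge 2k-1$, and the same bound $t(2k-t)\ge 2k-1$ for $1\le t\le 2k-1$. Both also dispose of $t=2k+1$ using $e(V(G_1),V(G_2))=2k-1$, and of $t=2k$ using $d_{G_2}(w)\ge k$, which gives a cut of size $2d_{G_2}(w)\ge 2k$.
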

\p
By contradiction, assume  that there exists $G\in \mathcal{H}_2(k)$ such that $\kappa'(G)\le 2k-2$.
Choose a nonempty proper subset $V_1\subseteq V(G)$ such that
$e(V_1,V_2) = \kappa'(G)\le 2k-2$, where $V_2=V(G)\setminus V_1$.
  Then we have the following result.

\begin{Claim}\label{EDST-th2-le-calim2}
Either $V_1\cap V(G_1)=\emptyset$ or $V_2\cap V(G_1)=\emptyset$
\end{Claim}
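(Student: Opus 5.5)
We argue by contradiction, following the proof of Claim~\ref{EDST-calim1}. Suppose that $V_1\cap V(G_1)\neq\emptyset$ and $V_2\cap V(G_1)\neq\emptyset$, and set $t_1=|V_1\cap V(G_1)|>0$ and $t_2=|V_2\cap V(G_1)|>0$. Then $t_1+t_2=n-2k-1$. Since $G_1=K_{n-2k-1}$ is a subgraph of $G$, every pair consisting of a vertex of $V_1\cap V(G_1)$ and a vertex of $V_2\cap V(G_1)$ is an edge of $G$ crossing the cut. Hence
\[
e(V_1,V_2)\ge e\bigl(V_1\cap V(G_1),V_2\cap V(G_1)\bigr)=t_1t_2 .
\]

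Next we bound $t_1t_2$ from below. The function $t_1\mapsto t_1(n-2k-1-t_1)$ is concave on the integer range $1\le t_1\le n-2k-2$, so its minimum is attained at an endpoint. At either endpoint the value is $n-2k-2$, and therefore $t_1t_2\ge n-2k-2$.

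Finally we use the order assumption in Definition~\ref{EDST-connect-def2}. Since $n\ge 4k+1$, we get
\[
e(V_1,V_2)\ge n-2k-2\ge 2k-1 .
\]
This contradicts $e(V_1,V_2)=\kappa'(G)\le 2k-2$. Hence at least one of $V_1\cap V(G_1)$ and $V_2\cap V(G_1)$ is empty, which proves the claim.

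There is no real obstacle here. The only point to check is that the hypothesis $n\ge 4k+1$ is strong enough to make the endpoint value $n-2k-2$ reach $2k-1$. The edges inside $V(G_2)$ and the edges between $V(G_1)$ and $V(G_2)$ only add to the cut, so they play no role in this claim. They will matter in the subsequent step, where $V_1\subseteq V(G_2)$ and the conditions $d_G(v)=2k-1$ and $\Delta(\overline{G}[V(G_2)])\le k$ are used.
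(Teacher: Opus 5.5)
Your proof is correct and is essentially the same as the paper's. Both arguments bound the cut below by the $t_1t_2$ crossing edges of the clique $K_{n-2k-1}$ and note that $t_1(n-2k-1-t_1)$ attains its minimum $n-2k-2$ at the endpoints of $1\le t_1\le n-2k-2$. Both then use $n\ge 4k+1$ to obtain $e(V_1,V_2)\ge 2k-1$, which contradicts $\kappa'(G)\le 2k-2$.
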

\p
On the contrary, suppose that both $V_1\cap V(G_1)\neq \emptyset$ and $V_2\cap V(G_1)\neq \emptyset$. Let $|V_1\cap V(G_1)|=t_1>0$ and $|V_2\cap V(G_1)|=t_2>0$.
Then we have $t_1+t_2=n-2k-1$ and $e(V_1,V_2)\ge e(V_1\cap V(G_1),V_2\cap V(G_1))=t_1t_2$. Since $t_1>0$, $t_2>0$ and $t_1+t_2=n-2k-1$, it follows that
$1\le t_1 \le n-2k-2$.  By algebraic manipulations, we have  $t_1t_2=t_1(n-2k-1-t_1)$, which has a minimum value $n-2k-2$ with $1\le t_1 \le n-2k-2$.  Since $n\ge 4k+1$,  it follows that
$e(V_1,V_2)\ge t_1t_2\ge n-2k-2 \ge 2k-1$, a contradiction. This proves Claim \ref{EDST-th2-le-calim2}.\q

By Claim \ref{EDST-th2-le-calim2}, we have either $V(G_1)\subseteq V_2$ or $V(G_1)\subseteq V_1$. Without loss of generality, we assume that  $V(G_1)\subseteq V_2$.  As $V(G_1)\cup V(G_2)=V_1\cup V_2=V(G)$,  we have $V_1\subseteq V(G_2)$  and $V(G_2)\cap V_1\neq \emptyset$. Let $|V(G_2)\cap V_1|=t_3$.
Then $1\le t_3 = |V(G_2)\cap V_1|\le |V(G_2)| = 2k+1$.
Since $e(V_1,V_2) = \kappa'(G)\le 2k-2$ and  $e(V(G_1),V(G_2))=2k-1$, we get  $V(G_2)\cap V_1 \neq V(G_2)$. And so $1\le t_3 = |V(G_2)\cap V_1|\le 2k$. We first assume that $1\le t_3\le 2k-1$.
As $\delta(G)\ge 2k-1$, we have $e(V_1,V_2)= e(V_1\cap V(G_2),V_2)\ge \delta(G)\times |V(G_2)\cap V_1| - 2\binom{t_3}{2} \ge  2kt_3- t_3^2$, which has a minimum value $2k-1$, a contradiction.

Thus $ t_3=2k$.  And so $|V(G_2)\setminus V_1|=1$. Assume that $V(G_2)\setminus V_1=\{w\}$.  Then $d_G(w)=d_{G_1}(w)+d_{G_2}(w)=2k-1$.
Since $\Delta(\overline{G}[V(G_2)])\le k$, it follow that $d_{G_2}(w)\ge |V(G_2)|-1-k=k$.
Therefore, $e(V_1,V_2)=e(V(G_2)\setminus \{w\},V(G_1)\cup \{w\})= e(V(G_2),V(G_1))+d_{G_2}(w)-d_{G_1}(w)=
e(V(G_2),V(G_1))+2d_{G_2}(w)-d_G(w)=2d_{G_2}(w)\ge 2k$, a contradiction.

Hence, every graph in $\mathcal{H}_2(k)$ is $(2k-1)$-edge-connected. This proves  Lemma \ref{EDST-thm2-le1}.
\q

\section{Proofs of Theorems \ref{spectral-spanning-tree-thm1} and \ref{spectral-spanning-tree-thm2}}
 Let $G$ be the graph with the maximum spectral radius  among all $s$-edge-connected graphs satisfying $\tau(G)\le k-1$. Let $\mathbf x$ be the Perron vector of $A(G)$, and  $x_v$ be the coordinate in $\mathbf x$  corresponding to  vertex $v\in V(G)$.
 As $\tau(G)\le k-1$, by Theorem \ref{Nash-Williams-Tutte} and the choice of $G$, there exists a partition  $\pi$ of $V(G)$ such that
\begin{align}\label{spectral-spanning-tree-eq1}
 e_{G}(\pi) = k(|\pi|-1)-1.
\end{align}
Let $\mathcal{P}$ be the set of all partitions of $V(G)$  satisfying
(\ref{spectral-spanning-tree-eq1}). Choose a partition $\pi=({V_1,V_2,\dots,V_p})\in \mathcal{P}$ such that the number of singleton parts in $\pi$  is as large as possible. Without loss of generality, we assume $V_1,V_2,\dots,V_t$ are the singleton parts and $2\le |V_{t+1}|\le |V_{t+2}|\le \cdots \le|V_p|$  in $\pi$,  where $t$ is a nonnegative integer.

Since $G$ is $s$-edge-connected, we have $e(V_i,V(G)\setminus V_i)\ge s$ for any $1\le i\le p$. Then
\[
\frac{sp}{2}\le e_G(\pi)=\frac{1}{2}\sum_{i=1}^p e(V_i,V(G)\setminus V_i)= k(p-1)-1,
\]
which implies
\begin{align}\label{spectral-spanning-tree-eq8}
  p(2k-s) \ge 2k+2.
\end{align}

\begin{Claim}\label{spectral-spanning-tree-claim1}
Each of the following holds:
\\
(i)   $p=|\pi|\ge   3$ for $k\le s\le 2k-2$ and $p=|\pi|\ge  2(k+1)$ for $s=2k-1$.
\\
(ii) for any $i\in \{t+1,t+2,\ldots,p\}$, $|V_i|\ge k+2$.
\end{Claim}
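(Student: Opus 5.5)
Part (i) comes directly from inequality (\ref{spectral-spanning-tree-eq8}), $p(2k-s)\ge 2k+2$. For $k\le s\le 2k-2$ we have $1\le 2k-s\le k$. Hence $p\ge (2k+2)/(2k-s)\ge (2k+2)/k>2$, and since $p$ is an integer, $p\ge 3$. For $s=2k-1$ we have $2k-s=1$, so the same inequality reads $p\ge 2k+2=2(k+1)$.

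For part (ii), fix $i\ge t+1$ and suppose $2\le |V_i|=m\le k+1$. The idea is to refine $\pi$ by splitting $V_i$ into singletons, which gives a partition $\pi'$ with $|\pi'|=p+m-1$. We then show that $\pi'$ still violates the Nash-Williams--Tutte condition, possibly after adding edges that keep $G$ extremal.

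First we bound $e_G(\pi')$. We have $e_G(\pi')=e_G(\pi)+e(G[V_i])\le k(p-1)-1+\binom{m}{2}$. Since $m\le k+1$, $\binom{m}{2}=m(m-1)/2\le k(m-1)$. Therefore
\[
e_G(\pi')\le k(p+m-2)-1=k(|\pi'|-1)-1 .
\]
If equality holds, then $\pi'\in\mathcal{P}$ and $\pi'$ has more singleton parts than $\pi$, which contradicts the choice of $\pi$.

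If the inequality is strict, we use the maximality of $\lambda(G)$. We add an edge $e$ between two distinct parts of $\pi'$; if $G[V_i]$ is not complete, we take $e$ inside $V_i$, otherwise between two parts of $\pi$. Adding $e$ raises $e_G(\pi')$ by one, so $\tau(G+e)\le k-1$ still holds by Theorem~\ref{Nash-Williams-Tutte}. Adding edges cannot decrease edge connectivity, so $G+e$ is still $s$-edge-connected. By Perron--Frobenius, $\lambda(G+e)>\lambda(G)$, which contradicts the choice of $G$. To make the first case clean, we should first note that maximality forces $e_G(\pi)=k(p-1)-1$ exactly and forces $G[V_j]$ to be complete for every part $V_j$. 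Adding an edge inside a part does not change $e_G(\pi)$, so a non-complete part would also contradict maximality. With every part complete we get $e(G[V_i])=\binom{m}{2}$, and then only the comparison of $\binom{m}{2}$ with $k(m-1)$ matters. Equality there occurs only for $m=2k+1$, so for $m\le k+1$ the inequality is strict, $e_G(\pi')<k(|\pi'|-1)-1$, and the edge-addition argument applies.

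The step I expect to need the most care is this edge-addition argument. We must check that there is a non-edge between two different parts of $\pi'$, since otherwise $G$ would be complete and would have large $\tau$. Among the available non-edges, one also has to be chosen so that $\pi'$ really witnesses $\tau(G+e)\le k-1$.

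This argument in fact gives $|V_i|\ge 2k+2$ rather than only $k+2$, so the stated bound $k+2$ follows comfortably.
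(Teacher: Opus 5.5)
Your proof is correct. Part (i) matches the paper. Part (ii) takes a somewhat different route.

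\textbf{Comparison for (ii).} The paper splits off a single vertex $w_1\in V_i$ and uses $e_G(\pi_1)=e_G(\pi)+d_{V_i}(w_1)\le k(p-1)-1+k$, which only needs $|V_i|-1\le k$. You instead split all of $V_i$ into singletons and use $\binom{m}{2}\le k(m-1)$. That inequality stays valid up to $m=2k$, so your route actually shows every non-singleton part has at least $2k+1$ vertices. The paper's one-vertex split is shorter but gives only the weaker bound $k+2$, which is all that is needed later.

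\textbf{Where your version is more careful.} Both computations only give $e_G(\pi')\le k(|\pi'|-1)-1$, while $\mathcal{P}$ is defined by the \emph{equality} \eqref{spectral-spanning-tree-eq1}. The paper simply asserts that $\pi_1$ ``satisfies'' \eqref{spectral-spanning-tree-eq1}. You supply the missing extremality argument:
\begin{itemize}
\item every part of $\pi$ is complete;
\item hence a non-edge between parts of $\pi'$ exists unless $G=K_n$, and $\tau(K_n)\ge k$ rules that out;
\item adding such a non-edge keeps $\tau\le k-1$ and $s$-edge-connectivity while strictly raising $\lambda$, contradicting the choice of $G$.
\end{itemize}
Moreover, since $m\le k+1<2k$ makes your inequality strict, your contradiction for the stated claim comes from the maximality of $\lambda(G)$ alone. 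It does not use the singleton-maximal choice of $\pi$.

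\textbf{Correction to your side remarks.} $\binom{m}{2}=k(m-1)$ holds exactly when $m=2k$, not when $m=2k+1$. For $m=2k+1$ one gets $\binom{m}{2}=k(m-1)+k$, so there is no contradiction. The sharpened bound your argument yields is therefore $|V_i|\ge 2k+1$, not $2k+2$. This does not affect the claim.
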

\p
By \eqref{spectral-spanning-tree-eq8}, we get $p\ge \frac{2(k+1)}{2k-s}$. Thus $p=|\pi|\ge   3$ for $k\le s\le 2k-2$ and $p=|\pi|\ge  2(k+1)$ for $s=2k-1$. This prove (i).

We prove (ii) by contradiction, suppose that there exists $i\in \{t+1,t+2,\ldots,p\}$ such that $|V_i|\le k+1$. Then $2\le |V_i|\le k+1$. Assume that $w_1\in V_i$.  Let $\pi_1=(V_1,\ldots,V_{i-1},V_{i}\setminus \{w_1\},V_{i+1},\ldots,V_{p},\{w_1\})$. Then $\pi_1$ is a partition of $V(G)$ with $|\pi_1|=p+1$, and
\begin{align*}
 e_{G}(\pi_1) = e_{G}(\pi)+d_{V_{i}}(w_1)\le k(p-1)-1+k=kp-1=k(|\pi_1|-1)-1.
\end{align*}
Thus $\pi_1$ is a partition of $V(G)$ satisfying
(\ref{spectral-spanning-tree-eq1}).  Furthermore, $V_1,V_2,\dots,V_t,\{x_1\}$ are the singleton parts in $\pi_1$, and so it contradicts with the choice of $\pi$. This proves (ii).  So Claim \ref{spectral-spanning-tree-claim1} holds.
\q

As $e(G)\le \sum_{i=t+1}^p\binom{|V_i|}{2}+e_G(\pi)$, by Claim \ref{spectral-spanning-tree-claim1}(ii) and Corollary \ref{21c},
we have
\begin{align}\label{spectral-spanning-tree-eq-add1}
e(G)&\le \sum_{i=t+1}^p\binom{|V_i|}{2}+e_G(\pi) \nonumber
\\
&\le \binom{n-(p-t-1)(k+2)-t}{2}+(p-t-1)\binom{k+2}{2}+k(p-1)-1
 \nonumber \\
&\le \binom{n-p+1}{2}+k(p-1)-1.
\end{align}

\subsection{Proof of Theorem \ref{spectral-spanning-tree-thm1} }

As $K_{n-2s+2k-2}\subsetneqq H_1(s,k)$, we have $\lambda(G)>n-2s+2k-3$.
Combining this with Lemma \ref{le3-1}, we get
\[
n-2s+2k-3<\lambda(G)\le \frac{s-1+\sqrt{(s+1)^2+4(2e(G)-ns)}}{2},
\]
which implies
\begin{align}\label{spectral-spanning-tree-eq0}
e(G) > \frac{(2n-5s+4k-5)^2-(s+1)^2+4ns}{8}.
\end{align}


\begin{Claim}\label{spectral-spanning-tree-claim2}
 $p=|\pi| \le 2s-2k+3$.
\end{Claim}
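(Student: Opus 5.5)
We argue by contradiction and compare the upper bound (\ref{spectral-spanning-tree-eq-add1}) with the lower bound (\ref{spectral-spanning-tree-eq0}). Suppose $p\ge 2s-2k+4$. The right-hand side of (\ref{spectral-spanning-tree-eq-add1}), namely $g(p)=\binom{n-p+1}{2}+k(p-1)-1$, is a quadratic in $p$ with positive leading coefficient. Its forward difference is $g(p+1)-g(p)=-(n-p)+k$, which is negative for $p<n-k$. So $g$ is decreasing on the range where $n-p$ is large.

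Since the number of parts $p$ can be as large as $n$ (all singletons), we treat two ranges separately.

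In the \emph{small range} $2s-2k+4\le p\le n-s$ (any cutoff with $n-p\ge k$ works), $g$ is decreasing, so
\[
e(G)\le g(2s-2k+4)=\binom{n-2s+2k-3}{2}+k(2s-2k+3)-1 .
\]

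In the \emph{large range} $p> n-s$, $g$ is convex, so its maximum over the range is attained at an endpoint. We still need a bound here, and a direct one is better. Each nonsingleton part has size at least $k+2$ by Claim~\ref{spectral-spanning-tree-claim1}(ii), so if $p>n-s$ there are many singleton parts, and every edge incident to a singleton is counted in $e_G(\pi)$. Thus
\[
e(G)\le \binom{n-p+1}{2}+kp \le \binom{s+1}{2}+kn ,
\]
which is $O(sn)$. This is far below the lower bound (\ref{spectral-spanning-tree-eq0}), which is roughly $n^2/2$, because $n\ge 3s^2$. This range therefore gives a contradiction immediately.

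The main computation is in the small range. We must show that
\[
\binom{n-2s+2k-3}{2}+k(2s-2k+3)-1\le \frac{(2n-5s+4k-5)^2-(s+1)^2+4ns}{8},
\]
contradicting the strict inequality (\ref{spectral-spanning-tree-eq0}). Expand the right-hand side as in the proof of Lemma~\ref{EDST-connect-le2}: it equals $\binom{n-2s+2k-3}{2}$ plus a term linear in $n$ with coefficient $1/2$ or more, plus a term of order $O(s^2)$.

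To check the coefficient, write $m=n-2s+2k-3$. Then $2n-5s+4k-5=2m-s+1$, and
\[
(2m-s+1)^2-(s+1)^2+4ns=4m^2-4m(s-1)-4s+4ns .
\]
Dividing by $8$ gives
\[
\frac{m^2-m(s-1)-s+ns}{2}=\binom{m}{2}+\frac{m(2-s)+ns-s}{2}.
\]
Since $m=n-O(s)$, the term $m(2-s)+ns$ equals $2n+O(s^2)$. Hence the right-hand side is $\binom{m}{2}+n-O(s^2)$.

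The left-hand side is $\binom{m}{2}+O(sk)$. Because $n\ge 3s^2$, the surplus $n$ dominates both the $O(s^2)$ loss and the $O(sk)$ term. This yields the contradiction and hence $p\le 2s-2k+3$.

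The main obstacle is making the constants explicit: the $O(s^2)$ terms (of the shape $s^2-\frac{3}{2}s+\dots$, with $k\le s$) must be checked to be genuinely absorbed by $n\ge 3s^2$. The boundary of the large range must also be handled carefully using the singleton count.
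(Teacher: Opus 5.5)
Your proposal is correct and is essentially the paper's argument: for $p\ge 2s-2k+4$ you compare (\ref{spectral-spanning-tree-eq-add1}) with (\ref{spectral-spanning-tree-eq0}), and the decisive case is $p=2s-2k+4$, which is the paper's check $f(2s-2k+4,s)\le 0$. The only difference is that you handle $p$ near $n$ by monotonicity plus a crude bound, where the paper uses convexity and checks the endpoint $p=n$; and the constant check you flag is immediate, since with your $m=n-2s+2k-3$ the right-hand side of (\ref{spectral-spanning-tree-eq0}) equals $\binom{m}{2}+n+s(s-k)-s+2k-3\ge\binom{m}{2}+n-1$, so it suffices that $n\ge k(2s-2k+3)$, which holds because $k(2s-2k+3)\le k(2k-1)<3k^2\le 3s^2$.
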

\p
Suppose, to the contrary, that $p \ge 2s-2k+4$.
By (\ref{spectral-spanning-tree-eq-add1}) and (\ref{spectral-spanning-tree-eq0}), we have
\[
\binom{n-p+1}{2}+k(p-1)-1
>
\frac{(2n-5s+4k-5)^2-(s+1)^2+4ns}{8},
\]
which is equivalent to
\begin{align}\label{EDSTconnectivity-eq1}
f(p,s)>0,
\end{align}
where
$f(p,s)=\frac{1}{2}p^2 - np + (k-\frac{1}{2})p + 2ns - 2nk + 3n
- 3s^2 - 2k^2 + 5sk - 6s + 4k - 4.$

Since $f(p,s)$ is a convex quadratic function in $p$, it follows that for $p\in [2s-2k+4, n]$, the maximum of $f(p,s)$ is attained at an endpoint. Therefore,
\begin{align}\label{sEDSTconnectivity-eq2}
f(p,s)\le \max\{f(2s-2k+4,s),\,f(n,s)\}.
\end{align}
As
\begin{align*}
f(2s-2k+4,s)=&-s^2+3sk-2k^2+s+k-n+2,
\\
f(n,s)=& -\frac{1}{2}n^2 + 2ns - nk + \frac{5}{2}n - 3s^2 - 2k^2 + 5sk - 6s +4k - 4,
\end{align*}
and $n\ge 3s^2$, it follows from the above ranges of $s$, where $k\le s\le 2k-2$, that
\begin{align*}
f(2s-2k+4,s)&=-\left(s-\frac{3k+1}{2}\right)^2
  +\frac{k^2+10k-4n+9}{4} \le
\frac{k^2 + 10k - 4n + 9}{4}\le 0 \text{~and}
   \\
f(n,s)&\le f(n,2k-2)=-\frac{1}{2}n^2 + 3nk - \frac{3}{2}n - 4k^2 + 6k - 4\le 0.
\end{align*}
Then by (\ref{sEDSTconnectivity-eq2}), we have  $f(p,s)\le \max\{f(2s-2k+4,s),\,f(n,s)\}\le 0$, contrary to (\ref{EDSTconnectivity-eq1}). So Claim \ref{spectral-spanning-tree-claim2} holds.
\q

\begin{Claim}\label{spectral-spanning-tree-claim3}
 $t=p-1$.
\end{Claim}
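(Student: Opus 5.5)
The plan is to argue by contradiction. Suppose $t\le p-2$, so that $\pi$ has at least two non-singleton parts. The idea is to compare the upper bound on $e(G)$ from the partition structure with the lower bound \eqref{spectral-spanning-tree-eq0} coming from $\lambda(G)>n-2s+2k-3$. The point is that a second non-singleton part forces at least $k+2$ vertices out of the largest part, rather than just one. For large $n$ this costs roughly $n(k+1)$ edges, which is more than the slack available.

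First, set $q=p-t\ge 2$ for the number of non-singleton parts. By Claim \ref{spectral-spanning-tree-claim1}(ii) each has size at least $k+2$. Applying Corollary \ref{21c} (via Lemma \ref{le0-1}) with lower bounds $a_i=1$ for the $t$ singletons and $a_i=k+2$ for the remaining parts, as in \eqref{spectral-spanning-tree-eq-add1}, gives
\[
e(G)\le \binom{n-(q-1)(k+2)-t}{2}+(q-1)\binom{k+2}{2}+k(p-1)-1 .
\]
Using Lemma \ref{le0-2} repeatedly (moving vertices into the largest part increases the sum), the right-hand side is maximized when $q=2$. This yields
\[
e(G)\le \binom{n-k-p}{2}+\binom{k+2}{2}+k(p-1)-1 .
\]
As a function of $p$, this is again a convex quadratic. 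By Claim \ref{spectral-spanning-tree-claim1}(i) and Claim \ref{spectral-spanning-tree-claim2}, together with $q\ge2$, we have $3\le p\le 2s-2k+3$. So it suffices to check the two endpoints; since $n$ is large, the worst case is the smallest admissible value of $n-k-p$, namely at $p=2s-2k+3$.

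Second, compare with \eqref{spectral-spanning-tree-eq0}. The lower bound expands as
\[
\frac{n^2}{2}-\frac{(5s-4k+5)}{2}\,n+O(s^2).
\]
The upper bound at $p=2s-2k+3$ expands as
\[
\frac{n^2}{2}-\frac{(4s-2k+7)}{2}\,n+O(s^2).
\]
The difference of the linear coefficients is $\frac{(4s-2k+7)-(5s-4k+5)}{2}=\frac{2k-s+2}{2}\ge 2$, because $s\le 2k-2$. Hence the upper bound is smaller than the lower bound by at least $2n-O(s^2)$. With $n\ge 3s^2$ this is a contradiction, once the $O(s^2)$ constants are computed explicitly. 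The other endpoint $p=3$ is handled the same way, with an even larger linear gap.

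The main obstacle is the bookkeeping of the constant terms. I would write the difference (lower bound minus upper bound) as an explicit quadratic in $s$ and $k$ plus a linear term in $n$. I would then bound it using $k\le s\le 2k-2$, which gives $k\ge (s+2)/2$, together with $n\ge 3s^2$, mirroring the computation of $f(2s-2k+4,s)$ in Claim \ref{spectral-spanning-tree-claim2}. This shows the inequality fails, and hence $q=1$, i.e.\ $t=p-1$.
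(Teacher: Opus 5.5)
Your reduction from three or more non-singleton parts to the bound with exactly two is fine. For $p-t\ge 3$, edge counting does close the case; this is the paper's Case~1, where the coefficient of $n$ in the difference is $4k+p-2s-1\ge p+3>0$. The gap is the case $t=p-2$. There, comparing $U(p)=\binom{n-k-p}{2}+\binom{k+2}{2}+k(p-1)-1$ with \eqref{spectral-spanning-tree-eq0} does \emph{not} give a contradiction in general, and two slips in your computation hide this.

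First, the right-hand side of \eqref{spectral-spanning-tree-eq0} is $\frac{n^2}{2}-\frac{4s-4k+5}{2}\,n+O(s^2)$; you dropped the term $\frac{4ns}{8}$. Second, $U(p)$ is decreasing in $p$ for large $n$, so the binding endpoint is $p=3$, not $p=2s-2k+3$. With both corrected, the coefficient of $n$ in (lower bound)$\,-\,U(p)$ is $3k+p-2s-2$. This equals $k+1$ at $p=2s-2k+3$, but only $3k-2s+1$ at $p=3$, which is $\le 0$ once $s\ge (3k+1)/2$ (possible for $k\ge 5$). So the claimed ``even larger linear gap'' at $p=3$ is false. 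For example, $k=9$, $s=16$, $p=3$ gives a difference of $-4n+114<0$.

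Replacing $p\ge 3$ by the sharper constraint $p(2k-s)\ge 2k+2$ from \eqref{spectral-spanning-tree-eq8} does not rescue the argument. The choice $k=15$, $s=26$, $p=8$ satisfies \eqref{spectral-spanning-tree-eq8} and Claim~\ref{spectral-spanning-tree-claim2}, yet the difference is $-n+96<0$ for every $n\ge 3s^2$.

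What is missing is a structural constraint coming from the singleton parts. Let $S$ be the set of the $p-2$ singleton vertices. Every edge at a vertex of $S$ crosses $\pi$, and each such vertex has degree at least $s$, so $k(p-1)-1=e_G(\pi)\ge (p-2)s-e(G[S])$. Combined with $e(G[S])\le\binom{p-2}{2}$, this yields $(p-3-2s+2k)(p-2)\ge 2(1-k)$.

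The paper uses this together with the edge count as follows. Via Lemma~\ref{le3-1} and the same bound $U(p)$, it first shows $\lambda(G)<n-k-p$. Together with $\lambda(G)>n-2s+2k-3$ this gives $p\le 2s-3k+2$, which is exactly the range where your coefficient $3k+p-2s-2$ is nonpositive. In that range the inequality above gives $(k+1)(p-2)\le 2(k-1)$, hence $p=3$. Then \eqref{spectral-spanning-tree-eq8} at $p=3$ forces $s\le (4k-2)/3<(3k+1)/2\le s$, a contradiction. Your proposal needs this ingredient, or something equivalent, to handle exactly two non-singleton parts.
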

\p
On the contrary, suppose that $t\neq p-1$. Then $t=p=n$ or $0\le t\le p-2$.
As $n\ge 3s^2$ and $p\le 2s-2k+3$ in Claim \ref{spectral-spanning-tree-claim2}, we have $t=p=n$ is impossible. Thus we have $0\le t\le p-2$.  We next discuss the following two cases.

\noindent
\textbf{Case 1.} $0\le t\le p-3$. By Claim \ref{spectral-spanning-tree-claim1}(ii)  and Corollary \ref{21c}, we have
\begin{align*}
e(G)\le& \binom{n-(p-t-1)(k+2)-t}{2}+(p-t-1)\binom{k+2}{2}+k(p-1)-1,
\\
\le& \binom{n-2k-p-1}{2}+2\binom{k+2}{2}+k(p-1)-1.
\end{align*}
Combining this with (\ref{spectral-spanning-tree-eq0}), we have
\begin{align*}
\frac{(2n-5s+4k-5)^2-(s+1)^2+4ns}{8}< \binom{n-2k-p-1}{2}+2\binom{k+2}{2}+k(p-1)-1,
\end{align*}
it follows that
\begin{align}\label{spectral-spanning-tree-eq4}
h(p,s) >0,
\end{align}
where $h(p,s)=\frac{1}{2}p^2+(3k-n+\frac{3}{2})p+2ns-4nk+n-3s^2+5sk+k^2-6s+10k-1$.

Since $h(p,s)$ is a convex  quadratic function in $p$ with positive leading coefficient, it follows that for $p\in [3,\, 2s-2k+3]$, the maximum of $h(p,s)$ is attained at an endpoint. Therefore,
\begin{align}\label{spectral-spanning-tree-eq5-0}
h(p,s)\le\max\bigl\{h(3,s),h(2s-2k+3,s)\bigr\}.
\end{align}
As
\begin{align*}
h(3,s)
= & 2ns-4nk-2n-3s^2+5sk+k^2-6s+19k+8,
\\
h(2s-2k+3,s)
= & -2nk-2n-s^2+7sk-3k^2+3s+10k+8,
\end{align*}
 $2\le k\le s\le 2k-2$ and $n\ge 3s^2$, we have $h(3,s)$ is increasing of $s$  on $[k,2k-2]$ and
\begin{align*}
h(2s-2k+3,s)-h(3,s)=(s-k)(2s+4k+9-2n)\le 0.
\end{align*}
Combining these with \eqref{spectral-spanning-tree-eq5-0}, we have
\begin{align*}
h(p,s)\le h(3,s)\le h(3,2k-2) =-6n-k^2+21k+8<0,
\end{align*}
contrary to (\ref{spectral-spanning-tree-eq4}).

\noindent
\textbf{Case 2.} $t=p-2$. Then by \eqref{spectral-spanning-tree-eq1} and $G$ is $s$-edge-connected, we have $k(p-1)-1=e_G(\pi)\ge (p-2)s-e(G[\cup_{i=1}^{p-2}V_i])$, and so $e(G[\cup_{i=1}^{p-2}V_i])\ge (p-2)(s-k)+1-k$.  Therefore, $(p-2)(s-k)+1-k\le e(G[\cup_{i=1}^{p-2}V_i])\le \binom{p-2}{2}$, it follows that
\begin{align}\label{spectral-spanning-tree-eq9-3}
(p-3-2s+2k)(p-2)\ge 2(1-k).
\end{align}

We next prove that $\lambda(G)<n-k-p$. Suppose, to the contrary, that
$\lambda(G)\ge n-k-p$. By $G$ is $s$-edge-connected and Lemma \ref{le3-1},
\[
n-k-p\le \lambda(G)
\le
\frac{s-1+\sqrt{(s+1)^2+4(2e(G)-ns)}}{2},
\]
which implies that
\begin{align}\label{spectral-spanning-tree-eq0-3}
e(G)\ge \frac{(2n-2k-2p-s+1)^2-(s+1)^2+4ns}{8}.
\end{align}
On the other hand, by Claim
\ref{spectral-spanning-tree-claim1}(ii) and Lemma \ref{le0-1},
\[
e(G)\le
\binom{n-k-p}{2}
+\binom{k+2}{2}
+k(p-1)-1.
\]
Combining this  with \eqref{spectral-spanning-tree-eq0-3}, we obtain
\begin{align}\label{spectral-spanning-tree-eq4-3}
&\binom{n-k-p}{2}+\binom{k+2}{2}+k(p-1)-1
-\frac{1}{8}[(2n-2k-2p-s+1)^2-(s+1)^2+4ns] \nonumber
\\
=&\frac{1}{2}\bigl(p(2k+2-s)+k(k+3-s)+s-2n\bigr)\ge0.
\end{align}
As $2\le k\le s\le 2k-2$, $n\ge 3s^2$ and  $p\le 2s-2k+3$, we have
 \begin{align*}
& p(2k+2-s)+k(k+3-s)+s-2n
 \\
 \le & (2s-2k+3)(2k+2-s)+k(k+3-s)+s-6s^2
\\
=&-6k^2+7k+6+(s-k)(2-11k-8s+8k)<0,
\end{align*}
contrary to \eqref{spectral-spanning-tree-eq4-3}.
Thus  $\lambda(G)<n-k-p$.

As $n-2s+2k-3<\lambda(G)<n-k-p$, we have
\begin{align}\label{spec-span-tree-case3-1}
p\le 2s-3k+2.
\end{align}
Combining this with \eqref{spectral-spanning-tree-eq9-3} and $p\ge 3$,  we get $2(1-k)\le (p-3-2s+2k)(p-2)\le  -(1+k)(p-2)$.
Then $2(k-1)\ge (1+k)(p-2)$, and  so $p=3$.  Combining this with \eqref{spec-span-tree-case3-1} and \eqref{spectral-spanning-tree-eq8},
we have $ \frac{3k+1}{2}\le s\le \frac{4k-2}{3}$, a contradiction.

Thus both of the above cases are impossible.  Hence, Claim \ref{spectral-spanning-tree-claim3} holds.
\q

\par\medskip
By Claim \ref{spectral-spanning-tree-claim3}, we have $|V_1|=|V_2|=\cdots=|V_{p-1}|=1$.
Hereafter, let
$$
\text{ $U_1=\{u:\,\,d_G(u)=n-1$ and $u\in V_p\}$, $U_2=V_p\setminus U_1$ and $V(G)\setminus V_p=\cup_{i=1}^{p-1}V_i=U_3$.}
$$
\begin{Claim}\label{spectral-spanning-tree-claim4}
 $p=|\pi|= 2s-2k+3$.
\end{Claim}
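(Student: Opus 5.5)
The plan is to prove Claim \ref{spectral-spanning-tree-claim4} by pure counting, using only the structure from Claim \ref{spectral-spanning-tree-claim3} and the edge connectivity of $G$; the spectral radius is not needed. By Claim \ref{spectral-spanning-tree-claim2} we already have $p\le 2s-2k+3$, so it suffices to show $p\ge 2s-2k+3$.

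By Claim \ref{spectral-spanning-tree-claim3}, the parts $V_1,\dots,V_{p-1}$ are singletons, so $U_3=\cup_{i=1}^{p-1}V_i$ has exactly $p-1$ vertices and $V_p$ is the only non-singleton part. Every edge with at least one end in $U_3$ joins two distinct parts of $\pi$. Every edge inside $V_p$ does not. Hence
\[
e_G(\pi)=e(G[U_3])+e(U_3,V_p)=\sum_{u\in U_3}d_G(u)-e(G[U_3]).
\]

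Next I bound both terms. Since $G$ is $s$-edge-connected, $\delta(G)\ge s$, so $\sum_{u\in U_3}d_G(u)\ge s(p-1)$. Trivially $e(G[U_3])\le\binom{p-1}{2}$. Substituting into \eqref{spectral-spanning-tree-eq1} gives
\[
k(p-1)-1=e_G(\pi)\ge s(p-1)-\binom{p-1}{2},
\]
that is,
\[
\frac{(p-1)(p-2)}{2}\ge (s-k)(p-1)+1 .
\]

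Dividing by $p-1>0$ (recall $p\ge 3$ by Claim \ref{spectral-spanning-tree-claim1}(i)) yields
\[
\frac{p-2}{2}\ge s-k+\frac{1}{p-1}>s-k,
\]
so $p-2>2(s-k)$. Since $p$ is an integer, $p\ge 2s-2k+3$. Together with Claim \ref{spectral-spanning-tree-claim2}, this gives $p=2s-2k+3$.

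The only point needing care is the identity for $e_G(\pi)$. Edges inside $U_3$ must be counted once in $e_G(\pi)$ but appear twice in the degree sum, which is why $e(G[U_3])$ is subtracted once. I expect no real obstacle beyond this bookkeeping.
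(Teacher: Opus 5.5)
Your proposal is correct and follows the paper's proof essentially verbatim. The paper also combines $k(p-1)-1=e_G(\pi)\ge s(p-1)-e(G[U_3])$, which comes from the singleton structure in Claim~\ref{spectral-spanning-tree-claim3} and $s$-edge-connectivity, with $e(G[U_3])\le\binom{p-1}{2}$ to get $p>2s-2k+2$, and then invokes Claim~\ref{spectral-spanning-tree-claim2} for the matching upper bound.
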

\p
By Claim \ref{spectral-spanning-tree-claim3} and $G$ is $s$-edge-connected, we have $k(p-1)-1=e_G(\pi)\ge (p-1)s-e(G[U_3])$. Thus $e(G[U_3])\ge  (p-1)(s-k)+1$. Combining this with  $e(G[U_3])\le \binom{p-1}{2}$,
\begin{align}\label{spectral-spanning-tree-eq9}
 (p-1)(s-k)+1\le e(G[U_3])\le {p-1\choose 2}.
\end{align}
Hence,  $p>2s-2k+2$. This, together with Claim \ref{spectral-spanning-tree-claim2}, yields that $p=|\pi|=2s-2k+3$. This proves Claim \ref{spectral-spanning-tree-claim4}.
\q

\begin{Claim}\label{spectral-spanning-tree-claim5}
Each of the following holds:
\\
(i)  $G[V_p]$ is a complete graph with  $|V_p|=n-p+1=n-2s+2k-2$;
\\
(ii)  $|U_1|=2k-s-1$;
\\
(iii)  $K_{2k-s-1}\vee (K_{n-s-1}+(2s-2k+2)K_1)\subseteq G$.
\end{Claim}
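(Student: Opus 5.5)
The plan is to use the extremality of $G$: it has maximum spectral radius among $s$-edge-connected graphs with $\tau(G)\le k-1$. Any local modification that keeps the partition $\pi$ satisfying (\ref{spectral-spanning-tree-eq1}), keeps $G$ $s$-edge-connected, and strictly increases $\lambda$ gives a contradiction. By Claims \ref{spectral-spanning-tree-claim3} and \ref{spectral-spanning-tree-claim4}, $U_3$ consists of $p-1=2s-2k+2$ singleton parts, and $|V_p|=n-p+1=n-2s+2k-2$.

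\textbf{Part (i).} I argue by contradiction. Suppose two vertices $a,b\in V_p$ are nonadjacent, and let $G'=G+ab$. Since $ab$ lies inside a part of $\pi$, we have $e_{G'}(\pi)=e_G(\pi)$. By Theorem \ref{Nash-Williams-Tutte} this gives $\tau(G')\le k-1$. Adding an edge cannot decrease edge connectivity, and $\lambda(G')>\lambda(G)$ by Perron--Frobenius. This contradicts the choice of $G$, so $G[V_p]\cong K_{n-2s+2k-2}$. In particular $G\in\mathcal{H}_1(s,k)$, with $G_1=G[V_p]$ and $V(G_2)=U_3$.

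\textbf{Part (ii), upper bound.} This is a counting argument. Every $u\in U_1$ is adjacent to all of $U_3$, so $e(U_1,U_3)=|U_1|(2s-2k+2)$. From (\ref{spectral-spanning-tree-eq9}) we have $e(G[U_3])\ge (2s-2k+2)(s-k)+1$. Since $e_G(\pi)=e(G[U_3])+e(U_3,V_p)=k(2s-2k+2)-1$, assuming $|U_1|\ge 2k-s$ gives
\[
e_G(\pi)\ge (2k-s)(2s-2k+2)+(2s-2k+2)(s-k)+1=k(2s-2k+2)+1,
\]
which is a contradiction. Hence $|U_1|\le 2k-s-1$.

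\textbf{Part (ii), lower bound.} This is the main step, and I would use Lemma \ref{le2.2}. Order $V_p$ as $y_1,\dots,y_m$ with $x_{y_1}\ge x_{y_2}\ge\cdots\ge x_{y_m}$. Suppose some $v\in U_3$ is adjacent to $y_j$ but not to $y_i$, where $x_{y_i}\ge x_{y_j}$. Replace the edge $vy_j$ by $vy_i$; Lemma \ref{le2.2}, applied with the vertex pair $y_i,y_j$, gives a strictly larger spectral radius.
\begin{itemize}
\item The new graph still contains $G_1+G_2$, and every degree is unchanged, so by Lemma \ref{EDST-connect-le1} it is still $s$-edge-connected ($n\ge 3s^2\ge 3s-2k+3$).
\item The number of edges crossing $\pi$ is unchanged, so (\ref{spectral-spanning-tree-eq1}) still holds and $\tau\le k-1$.
\end{itemize}
This contradicts extremality. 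Consequently, for each $v\in U_3$, $N_{V_p}(v)$ is upward closed with respect to the values $x_y$. Next, each $v\in U_3$ satisfies $d_G(v)\ge s$ and $d_{U_3}(v)\le 2s-2k+1$, so $d_{V_p}(v)\ge 2k-s-1$. Together with upward closedness, this forces the $2k-s-1$ vertices of $V_p$ with the largest Perron entries to be adjacent to every vertex of $U_3$. Under ties any initial segment works, because upward closedness forces adjacency to all vertices of equal or larger entry. These vertices are also adjacent to all of $V_p$ by (i), so they have degree $n-1$ and lie in $U_1$. Thus $|U_1|\ge 2k-s-1$, and so $|U_1|=2k-s-1$. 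The delicate points are that the modified graph stays in the admissible class and the tie-handling; both are covered above.

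\textbf{Part (iii).} This follows directly. $U_1$ is a clique of order $2k-s-1$ joined to all other vertices. The set $V_p\setminus U_1$ induces a clique of order $(n-2s+2k-2)-(2k-s-1)=n-s-1$. Finally, $U_3$ has $2s-2k+2$ vertices. Hence $K_{2k-s-1}\vee\bigl(K_{n-s-1}+(2s-2k+2)K_1\bigr)\subseteq G$.
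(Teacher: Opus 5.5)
Your proof is correct. Parts (i) and (iii) follow the paper; the differences are in (ii).

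\textbf{Upper bound $|U_1|\le 2k-s-1$.} The paper redoes a degree count with the quantities $r_i,t_i$ to reach $e_G(\pi)\ge\frac12(p-1)(|U_1|+s)\ge k(p-1)$. You instead insert the already available bound (\ref{spectral-spanning-tree-eq9}) into $e_G(\pi)\ge e(G[U_3])+e(U_1,U_3)$. This is the same counting, only shorter.

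\textbf{Lower bound $|U_1|\ge 2k-s-1$.} The paper assumes $|U_1|\le 2k-s-2$. It picks $u\in U_2$ maximizing $d_{U_3}(u)$, and a second vertex $u_2\in U_2$ adjacent to some $w_1\in U_3\setminus N_G(u)$. It then applies the Kelmans transformation $G(u,u_2)$. The maximality of $d_{U_3}(u)$ is what makes $G(u,u_2)$ genuinely different from $G$, so that Lemma \ref{le2.1}(ii) gives a strict increase.

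You use Lemma \ref{le2.2} instead, whose strictness comes directly from $x_{y_i}\ge x_{y_j}$. This gives you a stronger structural statement: each $N_{V_p}(v)$ with $v\in U_3$ has the form $\{y\in V_p: x_y\ge c_v\}$. That pins down $2k-s-1$ explicit members of $U_1$, namely those with the largest Perron entries. It also shows that the sets $N_{U_3}(y)$, $y\in V_p$, form a chain, which is the content of Claim \ref{spectral-spanning-tree-claim8}(i). The paper's Kelmans step, by contrast, is purely combinatorial and avoids the Perron vector.

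\textbf{One small correction.} Replacing $vy_j$ by $vy_i$ changes $d_G(y_i)$ and $d_G(y_j)$, so it is not true that every degree is unchanged. Lemma \ref{EDST-connect-le1} only needs $\delta\ge s$, however. This still holds, because $V_p$ remains a clique of order $n-2s+2k-2>s+1$, so $y_j$ keeps degree at least $s$.
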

\p
As $G$ is a graph with the maximum spectral radius  among all $s$-edge-connected graphs satisfying $\tau(G)\le k-1$, by the choice of $\pi$, we have $G[V_i]$ is a complete graph for any $i\in \{1,2,\ldots,p\}$. Combining this with Claims \ref{spectral-spanning-tree-claim3} and \ref{spectral-spanning-tree-claim4}, we have $G[V_p]$ is a complete graph with  $|V_p|=n-p+1=n-2s+2k-2$. This proves Claim \ref{spectral-spanning-tree-claim5}(i).

Since $p=2s-2k+3$, if  Claim \ref{spectral-spanning-tree-claim5}(i)--(ii) hold, then   by Claims \ref{spectral-spanning-tree-claim3} and  \ref{spectral-spanning-tree-claim4}, we must have
\[
K_{2k-s-1}\vee \bigl(K_{n-s-1}+(2s-2k+2)K_1\bigr) \subseteq G,
\]
which confirms that  Claim \ref{spectral-spanning-tree-claim5}(iii) follows. Thus, to complete the proof, it suffices to show that Claim \ref{spectral-spanning-tree-claim5}(ii) holds.

By contradiction and assume that  $|U_1|\neq 2k-s-1$. We discuss the following two cases.
\\ \noindent
\textbf{Case 1.} $|U_1|\le 2k-s-2$. Since $G$ is $s$-edge-connected, for every vertex $w\in U_3$,
we have
\begin{align}\label{SPEC-EDST-eq1}
d_{V_p}(w)=|N_G(w)\cap V_p|\ge s-(|U_3|-1)=s-p+2 = 2k-s-1.
\end{align}
Combining this with $|U_1|\le 2k-s-2$, we get that there exists a vertex $u\in U_2 $ such that $U_3 \cap N_G(u)\ne \emptyset$.
Among all such vertices, choose $u\in  U_2 $ such that
$d_{U_3}(u)$ is as large as possible.
Set
\[
W_1=V(G)\setminus (V_p\cup N_G(u)).
\]
Then $\emptyset\ne W_1\subsetneqq U_3$. Thus by (\ref{SPEC-EDST-eq1}),  we have for every $w_1\in W_1$,  $d_{V_p}(w_1)\ge 2k-s-1$. Together with $|U_1|\le 2k-s-2$, this yields
\[
\big(\cup_{w_1\in W_1}N_G(w_1)\big)\cap
\bigl(U_2\setminus  \{u\}\bigr)\neq \emptyset.
\]
Assume that $u_2\in \big(\cup_{w_1\in W_1}N_G(w_1)\big)\cap
\bigl(U_2\setminus  \{u\}\bigr)$. By Lemma \ref{EDST-connect-le1}, we get $G_1=G(u,u_2)$  is $s$-edge-connected. Furthermore, by Lemma \ref{le2.1} and $G_1\not\cong G$ by the choose of $u$, we have
$\lambda(G_1)>\lambda(G)$,
contradicting the choice of $G$.
\\ \noindent
\textbf{Case 2.} $|U_1|\ge 2k-s$.  Suppose that the $i$-th vertex of $U_3$ is adjacent to exactly  $r_i$ vertices of $ U_2$ and $t_i$ vertices of $U_3$, where $i\in \{1,2,\ldots,p-1\}$.
 As $G$ is $s$-edge-connected,   we have
\begin{align*}
e_G(\pi) =& e(U_1, U_3) + e(U_2, U_3)+e(G[U_3])
\\
=& |U_1|(p-1)+\sum_{1\le i\le p-1} r_i+\frac{1}{2}\sum_{1\le i\le p-1} t_i
\\
\ge& |U_1|(p-1)+\sum_{1\le i\le p-1} r_i+\frac{1}{2}\sum_{1\le i\le p-1} (s-|U_1|-r_i)
\\
\ge& |U_1|(p-1)+\frac{1}{2}\sum_{1\le i\le p-1} (s-|U_1|)
\\
= &  \frac{1}{2}(p-1)(|U_1|+s)\ge k(p-1),
\end{align*}
contrary to (\ref{spectral-spanning-tree-eq1}).

Thus both of the above cases are impossible and so  $|U_1| = 2k-s-1$.
This proves Claim \ref{spectral-spanning-tree-claim5}.\q

\par\medskip
 From Claim \ref{spectral-spanning-tree-claim5}, we have $|U_1\cup U_3|=s+1$. Now, given  $w\in U_3$. Since $G$ is $s$-edge-connected, we have $d_{U_2}(w)+d_{U_3}(w)+d_{U_1}(w)=d_G(w)\ge s$, which implies that $d_{U_2}(w)\ge d_{\overline{G}[U_3]}(w)$.
Moreover, when $d_{U_2}(w)=0$, we have $N_G[w]=U_1\cup U_3$. Let $W_2=\{u_1,u_2,\ldots,u_{t_1}\}\subseteq U_2$ satisfies   $d_{U_3}(u_1)\ge d_{U_3}(u_2)\ge \cdots \ge d_{U_3}(u_{t_1})\ge 1$, and let $W_3=\{w_1,w_2,\ldots,w_{t_2}\}\subseteq U_3$  satisfies that   $d_{U_2}(w_1)\ge d_{U_2}(w_2)\ge \cdots \ge d_{U_2}(w_{t_2})\ge 1$.
Particularly, if there exists $i\in \{1,2,\ldots,t_2\}$ such that $d_{U_2}(w_i)= d_{U_2}(w_{i+1})$, then $W_3$  satisfies that $x_{w_{i}}\ge x_{w_{i+1}}$.

\begin{Claim}\label{spectral-spanning-tree-claim8}
Each of the following holds:
\\
(i) $N_{U_3}(u_1)\supseteq N_{U_3}(u_2)\supseteq \cdots \supseteq N_{U_3}(u_{t_1})$;
\\
(ii) $N_{U_2}(w_1)\supseteq N_{U_2}(w_2)\supseteq \cdots \supseteq N_{U_2}(w_{t_2})$, where  $t_1=d_{U_2}(w_1)$  and  $t_2=d_{U_3}(u_1)$.
\end{Claim}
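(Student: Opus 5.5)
We argue by contradiction using the extremality of $G$ together with the Kelmans transformation, exactly as in Case~1 of the proof of Claim~\ref{spectral-spanning-tree-claim5}. First we record which modifications keep us inside the admissible class. Let $G'$ be obtained from $G$ by a Kelmans transformation $G(u,v)$ with $u,v\in U_2$, or with $u,v\in U_3$. Then $V_p$ still induces a complete graph, since $U_1\cup U_2=V_p$ is untouched internally. Every vertex keeps degree at least $s$: the vertex $v$ loses only neighbours $w\notin N_G[u]$, and $d_G(v)\ge s$ is not obviously preserved. So we will instead verify directly that $G'$ contains $K_{2k-s-1}\vee(K_{n-s-1}+(2s-2k+2)K_1)$ and has $\delta(G')\ge s$, so that $G'\in\mathcal H_1(s,k)$ and $G'$ is $s$-edge-connected by Lemma~\ref{EDST-connect-le1}. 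Moreover, $e_{G'}(\pi)=e_G(\pi)$, because the Kelmans transformation between two vertices of the same part (or between two singletons, where we treat $U_3$-internal edges) preserves the number of crossing edges. Hence $\tau(G')\le k-1$ by (\ref{spectral-spanning-tree-eq1}). If $G'\neq G$, Lemma~\ref{le2.1}(ii) gives $\lambda(G')>\lambda(G)$, contradicting the choice of $G$.

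For (i), suppose there are $i<j$ with $N_{U_3}(u_j)\not\subseteq N_{U_3}(u_i)$. We apply $G(u_i,u_j)$, which moves the edges from $u_j$ to $N_{U_3}(u_j)\setminus N_{U_3}(u_i)$ over to $u_i$. Since $u_i,u_j\in V_p$ and $V_p$ is a clique, the neighbours of $u_j$ outside $N[u_i]$ all lie in $U_3$, so the cross count $e(V_p,U_3)$ is unchanged. The degree of $u_j$ drops, but $u_j\in U_2$ has at least $|V_p|-1=n-2s+2k-3\ge s$ neighbours inside $V_p$ because $n\ge 3s^2$. Vertices of $U_3$ keep their degree, since each moved edge $u_jw$ becomes $u_iw$. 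The subgraph $K_{2k-s-1}\vee(\cdots)$ is preserved because $U_1$ is untouched. Thus the general argument applies. The identity $t_2=d_{U_3}(u_1)$ then follows: once the neighbourhoods are nested, every $w\in U_3$ with $d_{U_2}(w)\ge1$ is adjacent to $u_1$, so $W_3=N_{U_3}(u_1)$.

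For (ii), suppose $N_{U_2}(w_j)\not\subseteq N_{U_2}(w_i)$ for some $i<j$. Here $w_i,w_j$ are singleton parts, so we cannot move edges freely. The plan is to use $G(w_i,w_j)$ restricted to the $U_2$-neighbours only, that is, to move the edges $w_jy$ for $y\in N_{U_2}(w_j)\setminus N_{U_2}(w_i)$ over to $w_i$. This is the operation of Lemma~\ref{le2.2}, and it requires $x_{w_i}\ge x_{w_j}$. The resulting graph $G^*$ satisfies $e_{G^*}(\pi)=e_G(\pi)$, since the moved edges still cross between $U_3$ and $V_p$. The main obstacle is to keep $d(w_j)\ge s$. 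To handle this, we simultaneously move back to $w_j$ a suitable number of $U_3$-neighbours of $w_i$ that are non-neighbours of $w_j$, or use the inequality $d_{U_2}(w)\ge d_{\overline G[U_3]}(w)$ to balance degrees. If this fails, we fall back to the ordering convention on equal $d_{U_2}$ together with a Perron-vector comparison, $\lambda x_{w_i}-\lambda x_{w_j}=\sum_{N(w_i)}x-\sum_{N(w_j)}x$, to show $x_{w_i}\ge x_{w_j}$ whenever $d_{U_2}(w_i)\ge d_{U_2}(w_j)$, with vertices of $U_2$ carrying large entries relative to those of $U_3$. With $x_{w_i}\ge x_{w_j}$ in hand, Lemma~\ref{le2.2} gives the contradiction. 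Finally, $t_1=d_{U_2}(w_1)$ follows by nestedness, exactly as in (i).
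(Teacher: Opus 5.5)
Your argument for (i) is correct and is essentially the paper's. The transformation $G(u_i,u_j)$ with $u_i,u_j\in U_2$ only moves edges into $U_3$, so $V_p$ stays a clique and $\delta\ge s$ and $e_G(\pi)$ are preserved; Lemmas~\ref{EDST-connect-le1} and~\ref{le2.1} then give the contradiction. For the strict inequality you should also use the ordering $d_{U_3}(u_i)\ge d_{U_3}(u_j)$, which forces $N_{U_3}(u_i)\setminus N_{U_3}(u_j)\neq\emptyset$. If that set were empty, $G(u_i,u_j)$ would just be $G$ with $u_i$ and $u_j$ interchanged, and $\lambda$ would not change.

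Part (ii), however, is not proved. Moving $U_2$-edges from $w_j$ to $w_i$ lowers the degree of $w_j$. Since $|U_1\cup U_3|=s+1$ and $U_1$ is complete to $U_3$, we have $d_G(w_j)=s+d_{U_2}(w_j)-d_{\overline G[U_3]}(w_j)$. Whenever this equals $s$ (which in fact holds for every vertex of $U_3$ in the extremal graph), the graph produced by Lemma~\ref{le2.2} has a vertex of degree less than $s$. It is then not $s$-edge-connected, so it is not admissible. Your compensating move needs a vertex of $N_{U_3}(w_i)\setminus N_{U_3}[w_j]$. Such a vertex need not exist: it fails whenever $N_{\overline G[U_3]}(w_j)\setminus\{w_i\}\subseteq N_{\overline G[U_3]}(w_i)$. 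The alternative of balancing degrees via $d_{U_2}(w)\ge d_{\overline G[U_3]}(w)$ is not an actual construction. Finally, outside the tie case covered by the ordering convention, the inequality $x_{w_i}\ge x_{w_j}$ is asserted rather than derived.

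The missing idea is that (ii) needs no spectral argument at all: it follows from (i) combinatorially.
Suppose $i<j$ and $y\in N_{U_2}(w_j)\setminus N_{U_2}(w_i)$. Since $d_{U_2}(w_i)\ge d_{U_2}(w_j)$, there is $y'\in N_{U_2}(w_i)\setminus N_{U_2}(w_j)$. Both $y$ and $y'$ have a neighbour in $U_3$, so they lie in $W_2$, and by (i) their $U_3$-neighbourhoods are comparable.
\begin{itemize}
\item If $N_{U_3}(y')\subseteq N_{U_3}(y)$, then $w_i\in N_G(y)$.
\item If $N_{U_3}(y)\subseteq N_{U_3}(y')$, then $w_j\in N_G(y')$.
\end{itemize}
Either way the choice of $y,y'$ is contradicted. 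This is exactly the paper's proof: in a bipartite graph, nested neighbourhoods on one side force nested neighbourhoods on the other. After that, $t_1=d_{U_2}(w_1)$ and $t_2=d_{U_3}(u_1)$ follow as you indicate.
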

\p
On the contrary, suppose that there exist $i,j\in \{1, 2, \ldots, t_1\}$ with $i<j$  such that $N_{U_3}(u_i)\nsupseteq N_{U_3}(u_j)$. As $d_G(u_i)\ge d_G(u_j)$, it follows that    $N_{U_3}(u_j)\setminus  N_{U_3}(u_i)\neq \emptyset$ and $N_{U_3}(u_i)\setminus  N_{U_3}(u_i)\neq \emptyset$.
Then $\tau(G(u_i,u_j))\le k-1$ and $G(u_i,u_j)\ncong G$. Since $G(u_i,u_j)$ is $s$-edge-connected by Lemma \ref{EDST-connect-le1}, it follows from Lemma \ref{le2.1}  that $\lambda(G(u_i,u_j))>\lambda(G)$, contrary to the choice of $G$.
So Claim \ref{spectral-spanning-tree-claim8}(i) holds.

We prove $N_{U_2}(w_1)\supseteq N_{U_2}(w_2)\supseteq \cdots \supseteq N_{U_2}(w_{t_2})$ by contradiction. Suppose that there exist $i, j \in \{1,2,\dots,t_2\}$ with $i<j$, such that $N_{U_2}(w_j) \nsubseteq N_{U_2}(w_{i})$. Then there exists $u_{j'}\in N_{U_2}(w_j) \setminus  N_{U_2}(w_{i})$. Since $d_{U_2}(w_i)\ge  d_{U_2}(w_{j})$,   there exists $u_{i'}\in N_{U_2}(w_i) \setminus  N_{U_2}(w_{j})$.
If $j'<i'$, then by Claim \ref{spectral-spanning-tree-claim8}(i), we have
$w_i\in N_G[u_{i'}]\subseteq N_G[u_{j'}]$, contrary to  $u_{j'}\in N_{U_2}(w_j)  \setminus  N_{U_2}(w_{i})$. Thus we must have $j'>i'$, then by Claim \ref{spectral-spanning-tree-claim8}(i), we have
$w_j\in N_G[u_{j'}]\subseteq N_G[u_{i'}]$, contrary to  $u_{i'}\in N_{U_2}(w_i)  \setminus  N_{U_2}(w_{j})$.  Therefore, $N_{U_2}(w_1)\supseteq N_{U_2}(w_2)\supseteq \cdots \supseteq N_{U_2}(w_{t_2})$.
 Combining this with Claim \ref{spectral-spanning-tree-claim8}(i), we have $t_1=d_{U_2}(w_1)$ and $t_2=d_{U_3}(u_1)$. So Claim \ref{spectral-spanning-tree-claim8}(ii) holds.
\q


\begin{Claim}\label{spectral-spanning-tree-claim6}
Each of the following holds:
\\
(i)  $e(U_2, U_3)+e(G[U_3])=\binom{p-1}{2}+s-k$;
\\
(ii)  $\binom{p-1}{2}-(s-k)\le e(G[U_3])\le \binom{p-1}{2}$ and $s-k\le e(U_2, U_3)\le 2s-2k$.
\end{Claim}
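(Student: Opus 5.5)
The plan is to count the edges of $e_G(\pi)$ exactly using the structure established so far, and then use the degree inequality $d_{U_2}(w)\ge d_{\overline{G}[U_3]}(w)$, proved just before Claim~\ref{spectral-spanning-tree-claim8}, to control how many edges inside $U_3$ can be missing.

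\textbf{Part (i).} By Claims \ref{spectral-spanning-tree-claim3} and \ref{spectral-spanning-tree-claim4}, $U_3$ consists of the $p-1=2s-2k+2$ singleton parts. By Claim~\ref{spectral-spanning-tree-claim5}, $|U_1|=2k-s-1$, and every vertex of $U_1$ has degree $n-1$, so it is adjacent to all of $U_3$. The edges counted by $e_G(\pi)$ are exactly those between $V_p$ and $U_3$ together with those inside $U_3$. Hence, by \eqref{spectral-spanning-tree-eq1},
\[
k(p-1)-1=e_G(\pi)=(2k-s-1)(p-1)+e(U_2,U_3)+e(G[U_3]),
\]
which gives $e(U_2,U_3)+e(G[U_3])=(s-k+1)(p-1)-1$. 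Since $p-2=2s-2k+1$, we have
\[
\binom{p-1}{2}=\frac{(p-1)(2s-2k+1)}{2}=(s-k+1)(p-1)-(s-k+1).
\]
Substituting, the right-hand side becomes $\binom{p-1}{2}+s-k$, which proves (i).

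\textbf{Part (ii).} Let $m=\binom{p-1}{2}-e(G[U_3])\ge 0$ be the number of edges of $\overline{G}[U_3]$. The trivial bound $e(G[U_3])\le\binom{p-1}{2}$ together with (i) gives $e(U_2,U_3)=s-k+m\ge s-k$. For the remaining bounds, take any $w\in U_3$. Since $|U_1\cup U_3|=s+1$ and $d_G(w)\ge s$, we get $d_{U_2}(w)\ge d_{\overline{G}[U_3]}(w)$. Summing over $w\in U_3$ yields
\[
e(U_2,U_3)\ge \sum_{w\in U_3} d_{\overline{G}[U_3]}(w)=2m .
\]
Combining this with $e(U_2,U_3)=s-k+m$ gives $s-k+m\ge 2m$, i.e.\ $m\le s-k$. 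Therefore $e(G[U_3])\ge\binom{p-1}{2}-(s-k)$ and $e(U_2,U_3)=s-k+m\le 2s-2k$, which completes (ii).

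There is no real obstacle here. The only point needing care is the arithmetic identity $(s-k+1)(p-1)-1=\binom{p-1}{2}+s-k$, which uses $p=2s-2k+3$ from Claim~\ref{spectral-spanning-tree-claim4}.
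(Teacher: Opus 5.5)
Your proof is correct and follows essentially the paper's argument. Part (i) is the same exact count of $e_G(\pi)$ using $e(U_1,U_3)=(2k-s-1)(p-1)$. For part (ii), the paper simply invokes inequality \eqref{spectral-spanning-tree-eq9}, noting that $(p-1)(s-k)+1=\binom{p-1}{2}-(s-k)$ when $p=2s-2k+3$; this is the same degree-sum bound $\sum_{w\in U_3}d_G(w)\ge s(p-1)$ that you re-derive by summing $d_{U_2}(w)\ge d_{\overline{G}[U_3]}(w)$ over $U_3$.
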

\p
By Claims \ref{spectral-spanning-tree-claim3} and  \ref{spectral-spanning-tree-claim4}, we have
$
e(U_2, U_3)+e(G[U_3])=e_G(\pi)-e(U_1, U_3).
$
Combining this with (\ref{spectral-spanning-tree-eq1}),
Claim \ref{spectral-spanning-tree-claim4}
and Claim \ref{spectral-spanning-tree-claim5}(ii), we have
\begin{align*}
 e(U_2, U_3)+e(G[U_3])=k(p-1)-1-(2k-s-1)(p-1)
 =\binom{p-1}{2}+s-k.
\end{align*}
So Claim \ref{spectral-spanning-tree-claim6}(i) holds.

By (\ref{spectral-spanning-tree-eq9}) and Claim \ref{spectral-spanning-tree-claim4}, we get $\binom{p-1}{2}-(s-k)\le e(G[U_3])\le \binom{p-1}{2}$. This, together with
Claim \ref{spectral-spanning-tree-claim6}(i), yields that
$s-k\le e(U_2, U_3)\le 2s-2k$. This proves Claim \ref{spectral-spanning-tree-claim6}(ii). Hence Claim \ref{spectral-spanning-tree-claim6} holds.
\q

\par\medskip
Let $W_4=\{w\,| \,w\in U_3 \text{ and } d_{U_2}(w)=\emptyset\}$.  By  Claim \ref{spectral-spanning-tree-claim6}(ii), Claims \ref{spectral-spanning-tree-claim3} and \ref{spectral-spanning-tree-claim4},
we have
\begin{equation}\label{spectral-spanning-tree-eq11-0}
|W_4|\ge |U_3|-(2s-2k)=p-1-2s+2k=2.
\end{equation}
From Claim \ref{spectral-spanning-tree-claim5}, we have $|U_1\cup U_3|=s+1$. Combining this with $G$ is $s$-edge-connected,   we get
\begin{equation}\label{spectral-spanning-tree-eq11}
\text{for any $w\in W_4$, $d_G(w)=s$ and $N_G[w]=U_1\cup U_3$.}
\end{equation}

As $A(G)\mathbf{x}=\lambda(G)\mathbf{x}$,  by Claim \ref{spectral-spanning-tree-claim5}(i) and (ii),  for any $u\in U_1$, $z\in U_2$ and $w\in U_3$, we have
\begin{align}
\lambda(G)x_{u}&=\sum_{v\in U_1\setminus \{u\}}x_v+\sum_{v\in U_2}x_v+\sum_{v\in U_3}x_v,
\label{spectral-spanning-tree-eq13}
\\
\lambda(G)x_{z}&=\sum_{v\in U_1}x_v+\sum_{v\in U_2\setminus  \{z\}}x_v+\sum_{v\in N_{U_3}(z)}x_v,
\label{spectral-spanning-tree-eq14}
\\
\lambda(G)x_{w}&=\sum_{v\in U_1}x_v+\sum_{v\in N_{U_3}(w)}x_v+\sum_{v\in N_{U_2}(w)}x_v. \label{spectral-spanning-tree-eq15}
\end{align}

For short, we write $\lambda$ for $\lambda(G)$. Let $x_{z_0}=\min\{x_{v}: v\in U_2\}$,  $x_{w_0}=\max\{x_{v}: v\in U_3\}$ and $x_{w_0'}=\min\{x_{v}: v\in U_3\}$. By Claim  \ref{spectral-spanning-tree-claim3}, Claim \ref{spectral-spanning-tree-claim8}(ii) and Claim \ref{spectral-spanning-tree-claim6}(ii),    we have for any $w\in U_3$,
\begin{align}\label{spec-EDST-eq2}
p-2-(s-k)\le d_{U_3}(w)\le p-2  \text{~and~}  0\le d_{U_2}(w)\le 2(s-k).
\end{align}
Combining (\ref{spec-EDST-eq2}) with Claim \ref{spectral-spanning-tree-claim4}, Claim \ref{spectral-spanning-tree-claim6}(ii), (\ref{spectral-spanning-tree-eq14}) and (\ref{spectral-spanning-tree-eq15}), we have
\begin{align*}
\lambda x_{z_0}-\lambda x_{w_0}
= &\sum_{v\in U_2\setminus  \{z_0\}}x_v+\sum_{v\in N_{U_3}(z_0)}x_v -\sum_{v\in N_{U_2}(w_0)}x_v-\sum_{v\in N_{U_3}(w_0)}x_v
\\
\ge &  \sum_{v\in U_2\setminus  \{z_0\}}x_v-\sum_{v\in N_{U_2}(w_0)}x_v -\sum_{v\in N_{U_3}(w_0)}x_v
\\\nonumber
=&\sum_{v \in U_2 \setminus \big(\{z_0\} \cup N_G(w_0)\big)} x_v -\sum_{v\in N_{U_3}(w_0)}x_v
\\\ge &\big(n - s - 1 - 1 - 2(s - k)\big)x_{z_0} - (p - 2)x_{w_0}
\nonumber \\
= & (n-3s+2k-2)x_{z_0} - (2s-2k+1)x_{w_0},
\end{align*}
which implies that
\begin{align}\label{spectral-spanning-tree-eq16}
(\lambda-n+3s-2k+2)x_{z_0}\ge (\lambda-2s+2k-1)x_{w_0}.
\end{align}
As $n-1>\lambda> n-2s+2k-3$, $2\le k\le s\le 2k-2$ and $n\ge 3s^2$, we have $\lambda-2s+2k-1>\lambda-n+3s-2k+1>0$.
Combining this with (\ref{spectral-spanning-tree-eq16}), we have
\begin{align}\label{spectral-spanning-tree-eq17}
x_{z_0}\ge \frac{\lambda-2s+2k-1}{\lambda-n+3s-2k+2}x_{w_0}> \frac{n-2s+2k-2}{3s-2k+1}x_{w_0}> x_{w_0},
\end{align}
where the second inequality holds as $(\lambda-2s+2k-1)/(\lambda-n+3s-2k+2)$ is decreasing on $\lambda$  for $n-2s+2k-3<\lambda<n-1.$

Let $W_5=\{u\,|\,u\in U_2 \text{ and } N_{U_3}(u)=\emptyset\}$, that is,   $W_5=U_2\setminus W_2$.
By (\ref{spectral-spanning-tree-eq14}), we have for any $u\in W_5$ and $z\in W_2$,
\begin{align} \label{spectral-spanning-tree-eq18-1}
\lambda x_{u} - \lambda x_{z}= x_{z} - x_{u} - \sum_{v\in N_{U_3}(z) }x_v.
\end{align}
As $d_{U_3}(z)\le t_2$ for any  $z\in W_2$, by  (\ref{spectral-spanning-tree-eq18-1}), we have
\begin{align}\label{spectral-spanning-tree-eq18-2}
0> x_{u} - x_{z} \ge -\frac{d_{U_3}(z) x_{w_0} }{\lambda + 1} \ge -\frac{t_2 x_{w_0} }{\lambda + 1}.
\end{align}

\begin{Claim}\label{spectral-spanning-tree-claim7}
 Each of the following holds:
\\
(i) $e(U_2, U_3)= 2s-2k$ and $e(G[U_3])=\binom{p-1}{2}-s+k$;
\\
(ii) for any $v\in U_3$,  $d_G(v)=s$ and $d_{\overline{G}[U_3]}(v)=d_{U_2}(v)$;
\\
(iii)  $e(\overline{G}[U_3])=s-k$ and $e(G)=\binom{n-p+1}{2}+2k(s-k+1)-1$;
\\
(iv) $n-2s+2k-3 < \lambda < n-2s+2k-2$;
\\
 (v) $d_{\overline{G}[U_3]}(w_1)\ge d_{\overline{G}[U_3]}(w_2)\ge \cdots \ge d_{\overline{G}[U_3]}(w_{t_2})$.
\end{Claim}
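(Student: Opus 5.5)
The plan is to prove (i) first, by showing that $e(U_2,U_3)<2s-2k$ lets us build a graph $G'$ that contradicts the maximality of $G$. The other parts then follow by counting.

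For (i), suppose $e(U_2,U_3)<2s-2k$. By Claim \ref{spectral-spanning-tree-claim6}(i), $e(G[U_3])>\binom{p-1}{2}-(s-k)$. Every $w\in U_3$ has $d_G(w)=|U_1|+d_{U_3}(w)+d_{U_2}(w)$. Since $|U_1\cup U_3|=s+1$, we get $d_G(w)\ge s$ exactly when $d_{U_2}(w)\ge d_{\overline{G}[U_3]}(w)$. Summing over $U_3$ gives $e(U_2,U_3)\ge 2e(\overline{G}[U_3])$. Write $a=e(\overline{G}[U_3])$. Then $e(U_2,U_3)=s-k+a$ by Claim \ref{spectral-spanning-tree-claim6}(i), so $s-k+a\ge 2a$, i.e.\ $a\le s-k$. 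The case $a=s-k$ gives exactly (i). If $a<s-k$, then some $w\in U_3$ has $d_{U_2}(w)>d_{\overline{G}[U_3]}(w)$, so $d_G(w)>s$ and $w$ has a ``surplus'' edge $wz$ with $z\in U_2$. I would pick two vertices $w,w'\in U_3$, either both with surplus or nonadjacent ones whose degrees would stay at least $s$. Then I would delete two surplus edges $wz,w'z'$ and add the edge $ww'$ inside $U_3$, or use a one-edge version of this swap. Such a swap keeps $e_G(\pi)$ equal to $k(p-1)-1$ or lowers it, so $\tau\le k-1$ is preserved. By Definition \ref{EDST-connect-def1} the new graph lies in $\mathcal{H}_1(s,k)$, so it is $s$-edge-connected by Lemma \ref{EDST-connect-le1}. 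To compare spectral radii I would use the Rayleigh quotient: the change in $\mathbf x^TA\mathbf x$ is $2(x_wx_{w'}-x_wx_z-x_{w'}x_{z'})$. This is not obviously positive, because (\ref{spectral-spanning-tree-eq17}) says $x_z>x_{w_0}$. So that comparison goes the wrong way, and a different move is needed.

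The right move is to \emph{add} edges. When $a<s-k$ we have $e_G(\pi)=k(p-1)-1$, and the goal is an edge we can add while keeping $\pi$ deficient, which is impossible. So instead I would use the surplus to re-route. Take $w$ with surplus and its neighbour $z\in U_2$, and take $w''\in W_4$, which exists by (\ref{spectral-spanning-tree-eq11-0}) and is adjacent to all of $U_1\cup U_3$. Replacing $wz$ by $w''z$ does not directly help. A cleaner route is to take $u\in U_1$ and show that $a<s-k$ forces a contradiction with Claim \ref{spectral-spanning-tree-claim5}(ii) or with the extremality condition $e_G(\pi)=k(p-1)-1$ combined with $\delta\ge s$. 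Concretely, $2e_G(\pi)=\sum_{w\in U_3}d_G(w)+e(U_2\cup U_1,U_3)\ge (p-1)s+e(U_1\cup U_2,U_3)$. Inserting the known values and comparing with Claim \ref{spectral-spanning-tree-claim6} should force every $w$ to have $d_G(w)=s$, which gives $a=s-k$. This degree-sum identity is the step I would check first; I expect it to settle (i) and (ii) together without any spectral argument. If it does not, I fall back on the Kelmans transformation in Lemma \ref{le2.1} applied within $U_2$ to concentrate edges, together with the comparison (\ref{spectral-spanning-tree-eq18-2}).

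Once every $w\in U_3$ has $d_G(w)=s$, (ii) is immediate from the equivalence $d_G(w)=s\iff d_{U_2}(w)=d_{\overline{G}[U_3]}(w)$. For (iii), summing (ii) over $U_3$ gives $2a=e(U_2,U_3)=2s-2k$. Then
\[
e(G)=\binom{n-p+1}{2}+e_G(\pi)=\binom{n-p+1}{2}+k(p-1)-1,
\]
and with $p=2s-2k+3$ this equals $\binom{n-p+1}{2}+2k(s-k+1)-1$. Part (iv) follows from Lemma \ref{EDST-connect-le2}, since $G\in\mathcal{H}_1(s,k)$ by Claim \ref{spectral-spanning-tree-claim5}(iii) and $\delta(G)\ge s$. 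Part (v) follows from (ii) and the ordering of $W_3$ by $d_{U_2}$.
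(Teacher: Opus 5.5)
\textbf{The gap is in part (i).} Your reduction of (i) to excluding $a=e(\overline{G}[U_3])<s-k$ is fine: $a<s-k$ forces $e(U_2,U_3)=s-k+a>2a$, hence some $w\in U_3$ with $d_G(w)\ge s+1$ and a neighbour $z\in U_2$ of $w$. Your derivations of (ii)--(v) from (i) are also fine. But you never actually rule out $a<s-k$.

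The degree-sum identity you plan to check first cannot do it. Substituting $e_G(\pi)=k(p-1)-1$, $e(U_1,U_3)=(2k-s-1)(p-1)$ and $d_G\ge s$ into $2e_G(\pi)=\sum_{w\in U_3}d_G(w)+e(U_1\cup U_2,U_3)$ gives only $e(U_2,U_3)\le p-3=2s-2k$. That bound is already in Claim~\ref{spectral-spanning-tree-claim6}(ii).

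No counting argument can work here. Take $G[U_3]$ complete and any $s-k$ edges between $U_2$ and $U_3$. This satisfies $\delta\ge s$, lies in $\mathcal{H}_1(s,k)$, and has $e_G(\pi)=k(p-1)-1$, yet $a=0$. Only the spectral maximality of $G$ excludes it. Your fallback, Kelmans transformations inside $U_2$, leaves both $e(U_2,U_3)$ and $e(\overline{G}[U_3])$ unchanged, so it cannot help either.

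\textbf{The missing idea is a rotation pivoted at a vertex of $W_4$, not at $z$.}
\begin{itemize}
\item Pick $w'\in W_4$, which is nonempty by \eqref{spectral-spanning-tree-eq11-0}. Note $w\notin W_4$ since $wz\in E(G)$. By \eqref{spectral-spanning-tree-eq11}, $w'$ is adjacent to $w$ and has no neighbour in $U_2$, so $zw'\notin E(G)$.
\item Set $G_1=G-ww'+zw'$. Then $d_{G_1}(w)\ge s$, $w'$ keeps degree $s$, and $V_p$ is still a clique. So $G_1\in\mathcal{H}_1(s,k)$, and it is $s$-edge-connected by Lemma~\ref{EDST-connect-le1}.
\item An edge between two singleton parts is replaced by an edge between $V_p$ and a singleton. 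Hence $e_{G_1}(\pi)=e_G(\pi)$ and $\tau(G_1)\le k-1$.
\item By \eqref{spectral-spanning-tree-eq17}, $x_z\ge x_{z_0}>x_{w_0}\ge x_w$. Lemma~\ref{le2.2} (equivalently, the Rayleigh-quotient change $2x_{w'}(x_z-x_w)>0$) then gives $\lambda(G_1)>\lambda(G)$, a contradiction.
\end{itemize}
You did consider $w$, $z$ and a vertex of $W_4$, but your swap $wz\to w''z$ pivots at $z$. That compares $x_{w''}$ with $x_w$, which goes the wrong way. Pivoting at $w'$ instead compares $x_z$ with $x_w$, which is exactly what \eqref{spectral-spanning-tree-eq17} controls.
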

\p
We prove $e(U_2, U_3)= 2s-2k$ by contradiction.
Assume that $e(U_2, U_3)\neq 2s-2k$.  By Claim \ref{spectral-spanning-tree-claim6}(ii), we have $s-k\le e(U_2, U_3)\le 2s-2k-1$.  Let $e(U_2, U_3)=s-k+r$, where $0\le r\le s-k-1$.
By Claim \ref{spectral-spanning-tree-claim6}(i), we get $e(G[U_3])=\binom{p-1}{2}-r\ge \binom{p-1}{2}-s+k+1$.

We next prove there exists $w\in U_3$ such that  $d_G(w)\ge s+1$.
Otherwise, as $G$ is s-edge-connected, we have $d_G(w)= s$ for any $w\in U_3$. This, together with   $e(G[U_3])=\binom{p-1}{2}-r$, Claims \ref{spectral-spanning-tree-claim3}, \ref{spectral-spanning-tree-claim4} and \ref{spectral-spanning-tree-claim5}(ii), yields that
\begin{align*}
e(U_2, U_3)=&  s|U_3|-2e(G[U_3])-e(U_1, U_3)
\\
=& s(p-1)-2\binom{p-1}{2}+ 2r -(2k-s-1)(p-1)=
2r.
\end{align*}
Since $e(U_2, U_3)=s-k+r$, it follows that $r= s-k$, contrary to $0\le r\le s-k-1$. Hence, there exists $w\in U_3$ such that  $d_G(w)\ge s+1$.
Since $|U_1\cup U_3|=s+1$, we get $d_{U_1}(w)+d_{U_3}(w)\le s$. Then there exists $z\in U_2$ such that $z\in N_G(w)$.

By (\ref{spectral-spanning-tree-eq11-0}) and (\ref{spectral-spanning-tree-eq11}),  we get that there exist $w'\in U_3$ such that $w'w\in E(G[U_3])$ and  $N_G(w')\cap U_2=\emptyset$. Let $G_1=G-w'w+zw'$, where $z\in U_2\cap N_G(w)$. Then $G_1$ is $s$-edge-connected by Lemma \ref{EDST-connect-le1} and $\tau(G_1)\le k-1$ by \eqref{spectral-spanning-tree-eq1}. Moreover,
by (\ref{spectral-spanning-tree-eq17}), we get $x_z\ge x_{z_0}> x_{w_0}\ge x_{w}$. Combining this with Lemma \ref{le2.2}, we have $\lambda(G_1)>\lambda(G)$, contrary to the choice of $G$.
Thus $e(U_2, U_3)= 2s-2k$, and by Claim \ref{spectral-spanning-tree-claim6}(i), we have $e(G[U_3])=\binom{p-1}{2}-s+k$.
So Claim \ref{spectral-spanning-tree-claim7}(i) holds.

 We next show that  for any $v\in U_3$,  $d_G(v)=s$.
By contradiction, assume that there exists $w\in U_3$ such  that $d_G(w)\ge s+1$. As $G$ is $s$-edge-connected, by Claims \ref{spectral-spanning-tree-claim5}(ii) and    \ref{spectral-spanning-tree-claim7}(i),
\begin{align*}
 s+1+(p-2)s\le & \sum_{v\in U_3}d_G(v)
 \\
 = & e(U_1, U_3) + e(U_2, U_3)+2e(G[U_3])
 \\
 = &  (2k-s-1)(p-1) +  2s-2k + 2\binom{p-1}{2}-2s+2k=s(p-1),
\end{align*}
 a contradiction. Therefore, $d_G(v)=s$ holds  for any $v\in U_3$. This,   together with $|U_3 \cup U_1 |=s+1$, yields that $d_{\overline{G}[U_3]}(v)=d_{U_2}(v)$.
This proves Claim \ref{spectral-spanning-tree-claim7}(ii).

Claim \ref{spectral-spanning-tree-claim7}(iii)  follows immediately from Claims \ref{spectral-spanning-tree-claim3},  \ref{spectral-spanning-tree-claim5}  and \ref{spectral-spanning-tree-claim7}(i).

As $G\in\mathcal{H}_1(s,k)$, by Lemma \ref{EDST-connect-le2}, Claims \ref{spectral-spanning-tree-claim4} and \ref{spectral-spanning-tree-claim7}(iii), we have Claim \ref{spectral-spanning-tree-claim7}(iv)   holds.

Claim \ref{spectral-spanning-tree-claim7}(v)  follows immediately from
 Claims \ref{spectral-spanning-tree-claim8}(ii) and \ref{spectral-spanning-tree-claim7}(ii).
This proves Claim \ref{spectral-spanning-tree-claim7}.
\q

By (\ref{spectral-spanning-tree-eq17}) and $\lambda<n-2s+2k-2$ in Claim \ref{spectral-spanning-tree-claim7}(iv), we have
\begin{align}\label{spectral-spanning-tree-eq17-1}
x_{z_0}\ge \frac{\lambda-2s+2k-1}{\lambda-n+3s-2k+2}x_{w_0}> \frac{n-4s+4k-3}{s}x_{w_0}> x_{w_0},
\end{align}
where the second inequality holds as $(\lambda-2s+2k-1)/(\lambda-n+3s-2k+2)$ is decreasing on $\lambda$.

Recall that for any $w\in W_4$, $N_{U_3}[w]=U_3$  by (\ref{spectral-spanning-tree-eq11}).
For any $w'\in  U_3 \setminus W_4$, by the definition of $W_4$, we have $N_G(w')\cap U_2 \neq \emptyset$.  This, together with (\ref{spectral-spanning-tree-eq15}), Claim \ref{spectral-spanning-tree-claim7}(ii) and $G$ is $s$-edge-connected, yields that
for any $w'\in U_3\setminus W_4$ and $w\in W_4$,
\begin{align*}
\lambda x_{w'} - \lambda x_{w} &=\sum_{v\in N_{U_2}(w')}x_v+
\sum_{v\in N_{U_3}(w')}x_v-\sum_{v\in N_{U_3}(w)}x_v
\nonumber \\
&=\sum_{v\in N_{U_2}(w')}x_v +
\sum_{v\in N_{U_3}(w')}x_v
-\sum_{v\in U_3}x_v +x_w
\nonumber \\
& \ge d_{\overline{G} [U_3]}(w') x_{z_0} + x_w - x_{w'} - d_{\overline{G}\left[U_3\right]}(w') x_{w_0},
\end{align*}
Combining this with $x_{z_0}> x_{w_0}$ in \eqref{spectral-spanning-tree-eq17-1}, we have
\begin{align}\label{spectral-spanning-tree-eq18}
x_{w'} - x_{w} \ge \frac{d_{\overline{G}[U_3]}(w')(x_{z_0}-x_{w_0})}{\lambda + 1}>0.
\end{align}
Then by (\ref{spectral-spanning-tree-eq18}), Claim \ref{spectral-spanning-tree-claim7}(v)  and $N_G[w]=U_1\cup U_3$ for any $w\in W_4$  in (\ref{spectral-spanning-tree-eq11}), we have
\begin{equation}\label{spectral-spanning-tree-eq18-0}
\text{$x_{w_0}=x_{w_1}$  and $x_{w_0'}=x_{w}$ for any $w\in W_4$}.
\end{equation}

\begin{Claim}\label{spectral-spanning-tree-claim8-1}
If there exists $i\in \{1,2,\ldots,t_2\}$  such that $N_{U_2}(w_i)\supsetneqq N_{U_2}(w_{i+1})$, then $x_{w_{i}}>x_{w_{i+1}}$.
\end{Claim}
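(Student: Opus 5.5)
We compare the eigenvalue equations \eqref{spectral-spanning-tree-eq15} at $w_i$ and $w_{i+1}$. By Claim \ref{spectral-spanning-tree-claim7}(ii), every vertex of $U_3$ has degree $s$ and $N_{U_1}(w)=U_1$. Moreover, since $|U_1\cup U_3|=s+1$, its non-neighbours in $U_3$ (other than itself) number exactly $d_{U_2}(w)$. Subtracting the two equations gives
\begin{align*}
\lambda(x_{w_i}-x_{w_{i+1}})=\sum_{v\in N_{U_2}(w_i)\setminus N_{U_2}(w_{i+1})}x_v+\sum_{v\in N_{U_3}(w_i)}x_v-\sum_{v\in N_{U_3}(w_{i+1})}x_v .
\end{align*}
The $U_2$-part contributes at least $a\,x_{z_0}$, where $a=d_{U_2}(w_i)-d_{U_2}(w_{i+1})\ge 1$ by the strict containment.

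For the $U_3$-part we write $N_{U_3}(w)=U_3\setminus(\{w\}\cup \overline N(w))$, where $\overline N(w)$ is the set of non-neighbours of $w$ in $U_3$. Then
\begin{align*}
\sum_{v\in N_{U_3}(w_i)}x_v-\sum_{v\in N_{U_3}(w_{i+1})}x_v=x_{w_{i+1}}-x_{w_i}+\sum_{v\in \overline N(w_{i+1})}x_v-\sum_{v\in \overline N(w_i)}x_v .
\end{align*}
The first subtracted sum has $d_{U_2}(w_{i+1})$ terms and the second has $d_{U_2}(w_{i+1})+a$ terms. In the crude bound, we drop the nonnegative term $\sum_{\overline N(w_{i+1})}x_v$ only where needed and bound the rest by $x_{w_0}$. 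More carefully, we match $d_{U_2}(w_{i+1})$ of the terms of $\overline N(w_i)$ against terms of $\overline N(w_{i+1})$. If the matching cannot be made favourable, we just bound the $\overline N(w_i)$ sum by $(d_{U_2}(w_{i+1})+a)x_{w_0}$ and the $\overline N(w_{i+1})$ sum below by $d_{U_2}(w_{i+1})x_{w_0'}$. This yields
\begin{align*}
(\lambda+1)(x_{w_i}-x_{w_{i+1}})\ge a\,(x_{z_0}-x_{w_0})-d_{U_2}(w_{i+1})(x_{w_0}-x_{w_0'}).
\end{align*}

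The main obstacle is controlling the error term $x_{w_0}-x_{w_0'}$, which must be shown to be tiny relative to $x_{z_0}-x_{w_0}$. We estimate it from \eqref{spectral-spanning-tree-eq15} in the same way. Any two vertices of $U_3$ have neighbourhoods differing in at most $O(s-k)$ vertices, all lying in $U_2\cup U_3$. This gives $(\lambda+1)(x_{w_0}-x_{w_0'})\le 2(s-k)\max_{v\in U_2}x_v$, so $x_{w_0}-x_{w_0'}=O(s/n)\,x_{\max}$. On the other hand, by \eqref{spectral-spanning-tree-eq17-1} we have $x_{z_0}-x_{w_0}\ge \bigl(1-\frac{s}{n-4s+4k-3}\bigr)x_{z_0}$, which is of order $x_{z_0}$. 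We also need $\max_{v\in U_2}x_v$ to be comparable to $x_{z_0}$, which follows from \eqref{spectral-spanning-tree-eq14}: entries in $U_2$ differ by at most $t_2x_{w_0}/(\lambda+1)$, as in \eqref{spectral-spanning-tree-eq18-2}. Since $d_{U_2}(w_{i+1})\le 2(s-k)$ and $n\ge 3s^2$, the right-hand side is then $a(x_{z_0}-x_{w_0})-O(s^2/n)x_{z_0}>0$. This gives $x_{w_i}>x_{w_{i+1}}$.

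If the constants turn out too tight, the fallback is to use the nested structure from Claim \ref{spectral-spanning-tree-claim8}(ii) together with Claim \ref{spectral-spanning-tree-claim7}(v). The aim is to compare $\overline N(w_i)$ and $\overline N(w_{i+1})$ term by term, so that the error reduces to $a$ terms each at most $x_{w_0}$. These are then absorbed directly by $a\,x_{z_0}$ via $x_{z_0}>x_{w_0}$.
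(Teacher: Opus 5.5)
Your proof is correct and is essentially the paper's: subtract \eqref{spectral-spanning-tree-eq15} at $w_i$ and $w_{i+1}$, rewrite the $U_3$-sums through $N_{\overline{G}[U_3]}$ using Claim~\ref{spectral-spanning-tree-claim7}(ii), bound the $U_2$-part below by $x_{z_0}$, and finish with \eqref{spectral-spanning-tree-eq17-1}. The ``main obstacle'' you single out is not real, so neither the $x_{w_0}-x_{w_0'}$ estimate nor the fallback is needed. The paper simply discards the nonnegative sum $\sum_{v\in N_{\overline{G}[U_3]}(w_{i+1})}x_v$, bounds the remaining one by $(s-k)x_{w_0}$ via $\Delta(\overline{G}[U_3])\le s-k$, and uses $x_{z_0}>\frac{n-4s+4k-3}{s}x_{w_0}>(s-k)x_{w_0}$; equivalently, in your own displayed inequality the trivial bound $x_{w_0}-x_{w_0'}\le x_{w_0}$ together with $d_{U_2}(w_{i+1})\le s-k-1$ already gives a right-hand side of at least $x_{z_0}-(s-k)x_{w_0}>0$.
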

\p
Assume that there exists $i\in \{1,2,\ldots,t_2\}$  such that $N_{U_2}(w_i)\supsetneqq N_{U_2}(w_{i+1})$. By Claim \ref{spectral-spanning-tree-claim7}(i), we have $\triangle({\overline{G}[U_3]})\le s-k$. As $n\ge 3s^2$,
by (\ref{spectral-spanning-tree-eq15}), (\ref{spectral-spanning-tree-eq17-1}), Claims \ref{spectral-spanning-tree-claim5}(iii) and \ref{spectral-spanning-tree-claim7}(i), we have
\begin{align*}
 \lambda x_{w_i}- \lambda x_{w_{i+1}} &= \sum_{v\in N_{U_2}(w_i) \setminus  N_{U_2}(w_{i+1}) } x_v
 + \sum_{v\in N_{U_3}(w_i)}x_v - \sum_{v\in N_{U_3}(w_{i+1})}x_v
\\&=\sum_{v\in N_{U_2}(w_i) \setminus  N_{U_2}(w_{i+1})} x_v
+ \sum_{v\in U_3} x_v-\sum_{v\in N_{\overline{G}[U_3]}[w_i]} x_v
- \Big(\sum_{v\in U_3} x_v-\sum_{v\in N_{\overline{G}[U_3]}[w_{i+1}]}x_v\Big)  \\&\ge \big|N_{U_2}(w_i)\setminus N_{U_2}(w_{i+1})\big|x_{z_0}
-\sum_{v\in N_{\overline{G}[U_3]}(w_i)} x_v
-x_{w_i}+x_{w_{i+1}} \\
&\ge x_{z_0} - \sum_{v\in N_{\overline{G}[U_3]}(w_i)} x_v -x_{w_i}+x_{w_{i+1}}\\
&\ge x_{z_0} - (s - k)x_{w_0}-x_{w_i}+x_{w_{i+1}} \\
&> \frac{n - 4s + 4k - 3}{s} x_{w_0} - (s - k)x_{w_0} -x_{w_i}+x_{w_{i+1}}\\
&= \frac{n-s^2+sk-4s+4k-3}
     {s}x_{w_0}-x_{w_i}+x_{w_{i+1}} \\&>-x_{w_i}+x_{w_{i+1}}.
\end{align*}
Thus we have  $x_{w_i}>x_{w_{i+1}}$.  So Claim \ref{spectral-spanning-tree-claim8-1}   holds.
\q

By Claim \ref{spectral-spanning-tree-claim7}(i) and (v), we have $d_{\overline{G}[U_3]}(w_1)=
\triangle({\overline{G}[U_3]})\le s-k$.
This, together with Claims \ref{spectral-spanning-tree-claim8}(ii) and \ref{spectral-spanning-tree-claim7}(ii), yields that
$t_1=d_{U_2}(w_1)=d_{\overline{G}[U_3]}(w_1)
=\triangle({\overline{G}[U_3]})\le s-k$ and
 $ t_1+1=\triangle({\overline{G}[U_3]})+1 \le |W_3|=t_2=d_{U_3}(u_1)\le e(U_2, U_3)= 2s-2k$. Thus
\begin{align}\label{spectral-spanning-tree-eq19}
\begin{cases}
 t_1= s-k \text{~and~} G=H_1(s,k), & \text{if } 0\le s-k\le 1;  \\
1 \le t_1\le s-k \text{~and~} t_1+1\le t_2\le 2s-2k, & \text{if }  s-k\ge 2.
\end{cases}
\end{align}

\begin{Claim}\label{spectral-spanning-tree-claim9}
Assume that $s-k\ge 2$. Each of the following holds:
\\
(i) for every $r\in \{1,2,\ldots,t_2\}$, if there exist two distinct integers
$i, j\in \{1,2,\ldots,t_2\}$ with $r<\min\{i,j\}$ such that $w_iw_j\not\in E(G[U_3])$, then $ w_i,w_j\not\in  N_{U_3}(w_r)$;
\\
(ii)  $W_3=N_{\overline{G}[U_3]}[w_1]  \supseteq   N_{\overline{G}[U_3]}[w_2]$ and   $N_{\overline{G}[U_3]}[w_i]\supseteq N_{\overline{G}[U_3]}(w_{i+1})$ for any $i\in \{2,3,\ldots,t_2-1\}$.
\end{Claim}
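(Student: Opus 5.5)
The plan is to prove (i) by an extremality argument using a degree-preserving $2$-switch inside $U_1\cup U_3$, and then to derive (ii) from (i) by a combinatorial argument on $\overline{G}[U_3]$.

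\textbf{Setup.} By Claim \ref{spectral-spanning-tree-claim7}(ii), every $v\in U_3$ has $d_G(v)=s$ and $d_{\overline{G}[U_3]}(v)=d_{U_2}(v)$. Hence the vertices of $W_4$ are isolated in $\overline{G}[U_3]$, and every edge of $\overline{G}[U_3]$ lies inside $W_3$. Every $w\in W_3$ has $d_{\overline{G}[U_3]}(w)=d_{U_2}(w)\ge 1$. The ordering $w_1,\dots,w_{t_2}$ has nested $U_2$-neighbourhoods (Claim \ref{spectral-spanning-tree-claim8}(ii)), hence non-increasing complement degrees (Claim \ref{spectral-spanning-tree-claim7}(v)). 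By Claim \ref{spectral-spanning-tree-claim8-1} and the tie-breaking rule in the definition of $W_3$, we also have $x_{w_r}\ge x_{w_j}$ whenever $r<j$.

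\textbf{Proof of (i).} Suppose $r<\min\{i,j\}$, $w_iw_j\notin E(G)$, and, say, $w_rw_i\in E(G)$. Since $w_r$ is adjacent to $w_i$ but $w_j$ is not, and $d_{\overline{G}[U_3]}(w_r)\ge d_{\overline{G}[U_3]}(w_j)$, there is a vertex $y\in U_3\setminus\{w_r,w_j\}$ with $y\in N_G(w_j)\setminus N_G[w_r]$. Let
\[
G^{*}=G-w_rw_i-w_jy+w_iw_j+w_ry .
\]
This switch preserves every degree, keeps $K_{n-2s+2k-2}=G[V_p]$, and does not change any edge between parts of $\pi$ other than inside $U_3$, so $e_{G^*}(\pi)=e_G(\pi)$. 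Therefore $\tau(G^*)\le k-1$ by \eqref{spectral-spanning-tree-eq1}, and $G^*\in\mathcal{H}_1(s,k)$ is $s$-edge-connected by Lemma \ref{EDST-connect-le1}. By the Rayleigh quotient,
\[
\mathbf{x}^{T}A(G^*)\mathbf{x}-\mathbf{x}^{T}A(G)\mathbf{x}=2(x_{w_r}-x_{w_j})(x_y-x_{w_i}).
\]
The first factor is $\ge 0$. For the second factor, I would choose $y$ and the labelling of $\{i,j\}$ to make it nonnegative. If $x_y<x_{w_i}$, I would try the symmetric switch with the roles of $i,j$ exchanged, or choose $y$ to be the complement neighbour of $w_r$ of largest Perron value. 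This uses the eigen-equations \eqref{spectral-spanning-tree-eq15}, which make $x_v$ increase with $d_{U_2}(v)$ up to a tiny error, as in \eqref{spectral-spanning-tree-eq18}. Once $\lambda(G^*)\ge\lambda(G)$ is established, strictness follows: if equality held, $\mathbf{x}$ would be a Perron vector of $G^*$, and comparing the eigen-equations at $w_r$ in $G$ and $G^*$ gives $x_y=x_{w_i}$ and $x_{w_r}=x_{w_j}$. I expect these to contradict Claim \ref{spectral-spanning-tree-claim8-1}, or the strict inequalities obtained in this step, after a short check. So $\lambda(G^*)>\lambda(G)$, contradicting the choice of $G$.

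\textbf{Main obstacle.} Securing the sign $x_y\ge x_{w_i}$, together with strictness, is the step I expect to be hardest.

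\textbf{Proof of (ii).} Take any $w_i\in W_3$ with $i\ge 2$. It has complement degree $\ge 1$, so it lies on some edge $w_iw_j$ of $\overline{G}[U_3]$ with $j\ne i$. If $j=1$, then $w_i\in N_{\overline{G}[U_3]}(w_1)$ directly. Otherwise $\min\{i,j\}>1$, and (i) with $r=1$ gives $w_1w_i\notin E(G)$. Thus $W_3\subseteq N_{\overline{G}[U_3]}[w_1]$. The reverse inclusion holds because complement edges stay inside $W_3$, so $W_3=N_{\overline{G}[U_3]}[w_1]\supseteq N_{\overline{G}[U_3]}[w_2]$.

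For $i\ge 2$, let $v\in N_{\overline{G}[U_3]}(w_{i+1})$ with $v\neq w_i$. We have $v\ne w_1$ as well, since the case $v=w_1$ is not needed for the inclusion $N[w_i]\supseteq N(w_{i+1})$ beyond $w_1\in N[w_i]$, which was just shown. If $v=w_m$ with $m>i$, then (i) with $r=i$ applied to the non-edge $w_{i+1}w_m$ gives $v\in N_{\overline{G}[U_3]}(w_i)$. If $m<i$, I would use induction on $i$ together with the nesting already proved for smaller indices to place $v$ in $N_{\overline{G}[U_3]}[w_i]$. This yields the chain claimed in (ii).
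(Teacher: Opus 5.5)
Your argument for (i) has a genuine gap, and the problem is structural rather than one of signs. The switch $G^{*}=G-w_rw_i-w_jy+w_iw_j+w_ry$ lives inside $U_3$ and preserves every degree and every $U_2$--$U_3$ edge. So it never changes how $\overline{G}[U_3]$ sits relative to the $U_2$-neighbourhoods, and that relationship is exactly what (i) has to control.

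Concretely, take $s-k=3$ (so $k\ge 5$). Let $\overline{G}[U_3]$ consist of a path $abc$, an edge $de$ and isolated vertices, with $N_{U_2}(b)=\{u_1,u_2\}$ and $N_{U_2}(a)=N_{U_2}(c)=N_{U_2}(d)=N_{U_2}(e)=\{u_1\}$. This configuration is compatible with everything proved before Claim~\ref{spectral-spanning-tree-claim9}: $e(\overline{G}[U_3])=s-k$, $e(U_2,U_3)=2s-2k$, $d_{U_2}=d_{\overline{G}[U_3]}$ on $U_3$, nested neighbourhoods, and $|W_4|\ge 2$. It violates (i) at $r=1$, with $w_1=b$ and $\{w_i,w_j\}=\{d,e\}$, so your proof must exclude it.

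Your only admissible choices are $y\in\{a,c\}$. Each resulting $G^{*}$ has complement on $U_3$ again a $P_3+K_2$ plus isolated vertices, with the same $U_2$-attachments. Hence $G^{*}\cong G$ and $\lambda(G^{*})=\lambda(G)$. Consistently, the eigen-equations give $(\lambda+1)(x_a-x_d)=x_e-x_b<0$, so $x_y<x_{w_i}$ for every choice. Exchanging $i$ and $j$, or maximising over $y$, cannot help. The ``main obstacle'' you flag therefore cannot be overcome with this move.

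What is needed is a switch that transfers a $U_2$--$U_3$ edge. The paper uses $G_1=G+z'w_r-w_rw_i+w_iw_j-w_ju_1$. Here $u_1w_j\in E(G)$ by Claim~\ref{spectral-spanning-tree-claim8}(i), and $z'\in U_2\setminus W_2$ is a vertex with no neighbour in $U_3$. The degree changes at $z'$ and $u_1$ are harmless because both lie in the clique $V_p$, so $G_1\in\mathcal{H}_1(s,k)$ and $e_{G_1}(\pi)=e_G(\pi)$. In the example this turns $\overline{G}[U_3]$ into a $K_{1,3}$ centred at $b$ with $N_{U_2}(b)=\{u_1,u_2,z'\}$, i.e.\ into $H_1(s,k)$. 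Since $G_1$ is not obtained by a degree-preserving switch, there is no one-line Rayleigh identity. The paper instead bounds $\mathbf{y}^{T}(A(G_1)-A(G))\mathbf{x}$ from below, with $\mathbf{y}$ the Perron vector of $G_1$. It uses $x_{z_0}\gg x_{w_0}$, the analogous $y_{z_0^1}\gg y_{w_0^1}$, and lower bounds on $y_{w_r}-y_{w_j}$, $y_{z'}-y_{u_1}$ and $x_{z'}-x_{u_1}$.

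In (ii), your case $v=w_m$ with $m<i$ is left to an unspecified induction, but it closes directly. Suppose $w_m\notin N_{\overline{G}[U_3]}(w_i)$. Then (i) with $r=m$ gives $N_{\overline{G}[U_3]}(w_i)\subseteq\{w_1,\dots,w_{m-1}\}$. On the other hand, (i) with every $r<m$, applied to the complement edge $w_mw_{i+1}$, gives $\{w_1,\dots,w_m\}\subseteq N_{\overline{G}[U_3]}(w_{i+1})$. Together these contradict Claim~\ref{spectral-spanning-tree-claim7}(v).
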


\p
By contradiction, assume that $ |\{w_i,w_j\}\cap  N_{U_3}(w_r)|\ge 1$.
Without loss of generality, assume that  $w_iw_r\in E(G)$.
By  Claim \ref{spectral-spanning-tree-claim8}(i) and \ref{spectral-spanning-tree-claim7}(ii), we have $u_1w_j\in E(G)$.
As  $n\ge 3s^2$ and $2\le k\le s\le 2k-2$, we have $|U_2|=n-s-1\ge 3s^2-s+1\ge 2s-2k+1$.  Combining this with  $e(U_2, U_3)= 2s-2k$ in  Claim \ref{spectral-spanning-tree-claim7}(i),   there exists $z'\in U_2\setminus   W_2$ such that $z'w_r\notin E(G)$. Let $G_1=G+z'w_r-w_rw_i+w_iw_j-w_ju_1$.
Then $G_1$ is $s$-edge-connected  by Lemma \ref{EDST-connect-le1} and $\tau(G_1)\le k-1$.

Let $\mathbf y$ be the Perron vector of $A(G_1)$.
As $A(G_1)\mathbf{y}=\lambda(G_1)\mathbf{y}$,  for any $u\in U_1$, $z\in U_2$ and $w\in U_3$, we have
\begin{align}
\lambda(G_1)y_{u}&=\sum_{v\in U_1\setminus \{u\}}y_v+\sum_{v\in U_2}y_v+\sum_{v\in U_3}y_v,
\label{spectral-spanning-tree-eq20}
\\
\lambda(G_1)y_{z}&=\sum_{v\in U_1}y_v+\sum_{v\in U_2\setminus  \{z\}}y_v+\sum_{v\in N_{G_1}(z)\cap U_3}y_v,
\label{spectral-spanning-tree-eq21}
\\
\lambda(G_1)y_{w}&=\sum_{v\in U_1}y_v+\sum_{v\in N_{G_1}(w)\cap U_3}y_v+\sum_{v\in N_{G_1}(w)\cap U_2}y_v. \label{spectral-spanning-tree-eq22}
\end{align}

For short, we  write $\lambda_1$ for $\lambda(G_1)$. Let $y_{z_0^1}=\min\{y_{v}: v\in U_2\}$ and $y_{w_0^1}=\max\{y_{v}: v\in U_3\}$.
By Claim \ref{spectral-spanning-tree-claim7}(v) and $e(\overline{G}[U_3])=s-k$ in Claim \ref{spectral-spanning-tree-claim7}(iii), we
 have $\triangle({\overline{G}[U_3]})\le s-k-1$ by our hypothesis on the existence of $r$. By the construction of $G_1$, we have $\triangle({\overline{G_1}[U_3]})\le s-k.$
Combining this with (\ref{spec-EDST-eq2}),  we have for any  $w\in U_3$,
\begin{align}\label{spec-EDST-eq3}
p-2-(s-k)\le  d_{G_1[U_3]}(w)\le p-2  \text{~and~} 0\le  d_{G_1[U_2]}(w)\le s-k.
\end{align}
This, together with (\ref{spectral-spanning-tree-eq21}), (\ref{spectral-spanning-tree-eq22}), Claims  \ref{spectral-spanning-tree-claim3} and \ref{spectral-spanning-tree-claim4},   we have
\begin{align*}
\lambda_1y_{z_0^1}-\lambda_1y_{w_0^1} &= \sum_{v\in U_2\setminus \{z_0^1\}}y_v+\sum_{v\in N_{G_1}(z_0^1)\cap U_3}y_v
 -\sum_{v\in N_{G_1}(w_0^1)\cap U_3}y_v-\sum_{v\in N_{G_1}(w_0^1)\cap U_2}y_v
\\
&\ge \sum_{v\in U_2\setminus  \{z_0^1\}}y_v - \sum_{v\in N_{G_1}(w_0^1)\cap U_2}y_v
- \sum_{v\in N_{G_1}(w_0^1)\cap U_3}y_v
\nonumber \\
&\ge \sum_{v\in U_2\setminus  N_{G_1}(w_0^1)}y_{z_0^1} - y_{z_0^1} -(p-2)y_{w_0^1}
\nonumber \\
 &\ge (|U_2|-(s-k))y_{z_0^1}-y_{z_0^1}-(2s-2k+1)y_{w_0^1}
\nonumber \\
 &= (n-2s+k-2)y_{z_0^1}-(2s-2k+1)y_{w_0^1} ,
\end{align*}
which implies that
\begin{align}\label{spectral-spanning-tree-eq23}
(\lambda_1-n+2s-k+2)y_{z_0^1}\ge (\lambda_1-2s+2k-1)y_{w_0^1}.
\end{align}
By Claim \ref{spectral-spanning-tree-claim5}(iii) and $G_1=G+z'w_r-w_rw_i+w_iw_j-w_ju_1$, we have  $K_{n-2s+2k-2}\subsetneqq G_1$.
Thus $n-1>\lambda(G_1)=\lambda_1 > n-2s+2k-3$. This, together with $4\le k+2\le s\le 2k-2$, $n\ge 3s^2$ and  (\ref{spectral-spanning-tree-eq23}), yields that
\begin{align}\label{spectral-spanning-tree-eq24}
y_{z_0^1}\ge \frac{\lambda_1-2s+2k-1}{\lambda_1-n+2s-k+2}y_{w_0^1}\ge \frac{n-2s+2k-2}{2s-k+1}y_{w_0^1}> y_{w_0^1}.
\end{align}

As $G_1=G+z'w_r-w_rw_i+w_iw_j-w_ju_1$,  we have $d_{\overline{G_1}[U_3]}(w_r)=
d_{\overline{G}[U_3]}(w_r)+1$ and $d_{G_1[U_2]}(w_r)=d_{U_2}(w_r)+1$. As $d_{\overline{G}[U_3]}(w_r)=d_{U_2}(w_r)$
in Claims \ref{spectral-spanning-tree-claim7}(ii), we have  $d_{\overline{G_1}[U_3]}(w_r)=d_{G_1[U_2]}(w_r)$.
Combining this with  (\ref{spectral-spanning-tree-eq22}), (\ref{spec-EDST-eq3}) and  Claim \ref{spectral-spanning-tree-claim8}(ii),
 we have
\begin{align*}
&\lambda_1y_{w_r} - \lambda_1y_{w_j}
\nonumber\\
=&
\sum_{v\in N_{G_1}(w_r)\cap   U_2}y_v
  - \sum_{v\in N_{G_1}(w_j)\cap U_2}y_v
  +\sum_{v\in N_{G_1}(w_r)\cap U_3}y_v
  - \sum_{v\in N_{G_1}(w_j)\cap U_3}y_v
  \nonumber\\
=&
\sum_{v\in N_{G_1}(w_r)\cap U_2}y_v
  - \sum_{v\in N_{G_1}(w_j)\cap  U_2}y_v
  +\big(\sum_{v\in U_3}y_v
  -\sum_{v\in N_{{\overline{G_1}}[U_3]}[w_r]}y_v\big)
   - \big(\sum_{v\in U_3}y_v
  -\sum_{v\in N_{{\overline{G_1}}[U_3]}[w_j]}y_v\big)
   \nonumber\\
=&
\sum_{v\in N_{G_1}(w_r)\cap U_2}y_v
  - \sum_{v\in N_{G_1}(w_j)\cap U_2}y_v
    +\sum_{v\in N_{{\overline{G_1}}[U_3]}[w_j]}y_v
  -\sum_{v\in N_{{\overline{G_1}}[U_3]}[w_r]}y_v
\nonumber \\
\ge &
y_{z'}+y_{u_1}+ y_{w_j} -y_{w_r} -\sum_{v\in N_{{\overline{G_1}}[U_3]}(w_r)}y_v
\\
\ge &
y_{z'}+y_{u_1}+ y_{w_j} -y_{w_r} -  d_{G_1[U_2]}(w_r) y_{w_0^1}
\\
\ge & 2y_{z_0^1} -  (s-k)y_{w_0^1} + y_{w_j} -y_{w_r}.
\end{align*}
Combining this with (\ref{spectral-spanning-tree-eq24}),  $k+2\le s\le 2k-2$ and $n\ge 3s^2$, we have
\begin{equation}\label{spectral-spanning-tree-eq25}
y_{w_r} - y_{w_j}\ge \frac{2y_{z_0^1} - (s-k)y_{w_0^1}}{\lambda_1+1}\ge
\frac{2n-2s^2+3sk-k^2-5s+5k-4}{(2s-k+1)(\lambda_1+1)}y_{w_0^1} >0.
\end{equation}

Since $z'\in U_2\setminus W_2$, we have $N_G(z')\cap U_3=\emptyset$. Combining this with
 $G_1=G+z'w_r-w_rw_i+w_iw_j-w_ju_1$,  (\ref{spectral-spanning-tree-eq21}) and  Claim   \ref{spectral-spanning-tree-claim8}(i), we have
\begin{align*}
\lambda_1y_{z'} - \lambda_1y_{u_1}&= y_{u_1} - y_{z'} + \sum_{v\in N_{G_1}(z')\cap U_3}y_v  - \sum_{v\in N_{G_1}(u_1)\cap U_3}y_v
\nonumber \\
&= y_{u_1} - y_{z'} - \sum_{v\in N_{G_1}(u_1)\cap (U_3\setminus\{w_r\})}y_v
\nonumber \\
&\ge
y_{u_1}-y_{z'}-(t_2-1)y_{w_0^1}
\end{align*}
By $t_2\le 2s-2k$ in (\ref{spectral-spanning-tree-eq19}), we have
\begin{equation}\label{spectral-spanning-tree-eq26}
y_{z'} - y_{u_1}\ge \frac{1- t_2}{\lambda_1+1}y_{w_0^1} \ge \frac{1- 2s+2k}{\lambda_1+1}y_{w_0^1}.
\end{equation}

As $G_1=G+z'w_r-w_rw_i+w_iw_j-w_ju_1$ and $t_2\le 2s-2k$ in (\ref{spectral-spanning-tree-eq19}), by (\ref{spectral-spanning-tree-eq18-2}), (\ref{spectral-spanning-tree-eq17-1}) and  (\ref{spectral-spanning-tree-eq24})--(\ref{spectral-spanning-tree-eq26}),
we have
\begin{align}\nonumber
&{\mathbf y}^T (\lambda(G_1)-\lambda(G)) {\mathbf x}= {\mathbf y}^T (A(G_1)-A(G)) {\mathbf x}
\nonumber\\
=& y_{z'}  x_{w_r}+y_{w_r}x_{z'} -y_{w_r}x_{w_i}-y_{w_i}x_{w_r}
+y_{w_i}x_{w_j}+y_{w_j}x_{w_i}-y_{w_j}x_{u_1}-y_{u_1}x_{w_j}
\nonumber\\
=& (y_{z'} -y_{w_i} )(x_{w_r}-x_{w_j})+(y_{z'} -y_{u_1})x_{w_j}+
(x_{z'} -x_{w_i} )(y_{w_r}-y_{w_j})+(x_{z'} -x_{u_1})y_{w_j}
\nonumber\\
\ge& (y_{z'} -y_{u_1})x_{w_j}+
(x_{z'} -x_{w_i} )(y_{w_r}-y_{w_j})+(x_{z'} -x_{u_1})y_{w_j}
\nonumber\\
\ge& (y_{z'} -y_{u_1})x_{w_j}+
(x_{z_0} -x_{w_0} )(y_{w_r}-y_{w_j})+(x_{z'} -x_{u_1})y_{w_j}
\nonumber\\
>& \frac{1-2s+2k}{\lambda_1+1}y_{w_0^1}x_{w_j} + \frac{(2n-2s^2+3sk-k^2-5s+5k-4)(n-5s+4k-3)}{(\lambda_1+1)(2s-k+1)s}x_{w_0} y_{w_0^1}-\frac{t_2}{\lambda+1}x_{w_0}y_{w_j}
\nonumber\\\label{spec-EDST-eq6}
>& \left(-\frac{2s-2k}{\lambda_1+1} + \frac{(2n-2s^2+3sk-k^2-5s+5k-4)(n-5s+4k-3)}{(\lambda_1+1)(2s-k+1)s}-
\frac{2s-2k}{\lambda+1}\right)x_{w_0}y_{w_0^1}.
\end{align}
Since $n-1>\lambda_1 > n-2s+2k-3$, $\lambda > n-2s+2k-3$, $k+2\le s\le 2k-2$ and $n\ge 3s^2$,  we have
\begin{align*}
& -\frac{2s-2k}{\lambda_1+1} + \frac{(2n-2s^2+3sk-k^2-5s+5k-4)(n-5s+4k-3)}{(\lambda_1+1)(2s-k+1)s}-
\frac{2s-2k}{\lambda+1}
\\
>& \frac{(2n-2s^2+3sk-k^2-5s+5k-4)(n-5s+4k-3)}{sn(2s-k+1)}-
\frac{4s-4k}{ n-2s+2k-2}
\\
\ge& \frac{(2n-3k^2+5k-2)(n-6k+7)}{n(2s-k+1)s}-  \frac{2}{k}
\\
\ge& \frac{(2n-3k^2+5k-2)(n-6k+7)}{2kn(3k-3)}-  \frac{2}{k}
\\
= &  \frac{2n^2-(3k^2+19k-24)n
  +18k^3-51k^2+47k-14}{6kn(k-1)}
>0.
\end{align*}
This, together with (\ref{spec-EDST-eq6}), yields that
\begin{align*}
{\mathbf y}^T (\lambda(G_1)-\lambda(G)) {\mathbf x} >0.
\end{align*}
Thus we have $\lambda(G_1)>\lambda(G)$, a contradiction.
This proves Claim \ref{spectral-spanning-tree-claim9}(i).

By  Claims  \ref{spectral-spanning-tree-claim7}(ii) and   \ref{spectral-spanning-tree-claim9}(i), we have Claim \ref{spectral-spanning-tree-claim9}(ii)  holds. This proves Claim \ref{spectral-spanning-tree-claim9}.
\q

By  Claims \ref{spectral-spanning-tree-claim8}(ii), \ref{spectral-spanning-tree-claim7}(ii)  and \ref{spectral-spanning-tree-claim9}(ii), we have
\begin{align}\label{spectral-spanning-tree-eq27-0}
 d_{U_2}(w_1)=t_1=d_{\overline{G}[U_3]}(w_1)=t_2-1.
\end{align}

By Claim \ref{spectral-spanning-tree-claim7}(ii) and (v), we have  $d_{U_2}(w_2)\le d_{U_2}(w_1)=t_1$.
By (\ref{spectral-spanning-tree-eq18}) and Claim \ref{spectral-spanning-tree-claim8-1}, we have
\begin{align}\label{spec-EDST-add-eq1}
x_{w_0}=x_{w_1}\ge x_{w_2}\ge \cdots \ge x_{w_{t_2}}>x_{w_0'}.
\end{align}


\begin{Claim}\label{spec-span-tree-claim10-1}
Each of the following holds:
\\
(i) $d_{U_2}(w_2)\le \frac{s-k+1}{2}$;
\\
(ii) for any $w\in W_4$, \begin{align}\label{n1e}x_{w}-x_{w_2}\ge \frac{1}{\lambda + 1}x_{w_0}-\frac{s-k+1}{2(\lambda + 1)}x_{z_0}-\frac{t_2(s-k+1)}{2(\lambda + 1)^2}x_{w_0}.\end{align} Particularly, if $t_1<s-k=3$, then \begin{align}\label{n2e}x_{w}-x_{w_2}=x_{w}-x_{w_0}\ge \frac{2}{\lambda + 1}x_{w_0}-\frac{2}{\lambda + 1}x_{z_0}-\frac{6}{(\lambda + 1)^2}x_{w_0}.\end{align}
\end{Claim}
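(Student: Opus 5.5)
The plan is to exploit the explicit structure of $\overline{G}[U_3]$ already obtained, under the standing assumption $s-k\ge 2$ of Claim \ref{spectral-spanning-tree-claim9}. By Claim \ref{spectral-spanning-tree-claim7}(ii), every $w\in U_3$ satisfies $d_{\overline{G}[U_3]}(w)=d_{U_2}(w)$. Hence the vertices of $W_4$ are isolated in $\overline{G}[U_3]$, and all $s-k$ edges of $\overline{G}[U_3]$ (Claim \ref{spectral-spanning-tree-claim7}(iii)) lie inside $W_3$. By Claim \ref{spectral-spanning-tree-claim9}(ii) and (\ref{spectral-spanning-tree-eq27-0}), $w_1$ is adjacent in $\overline{G}[U_3]$ to all of $W_3\setminus\{w_1\}$, which accounts for $t_1$ edges.

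For (i), the remaining $s-k-t_1$ complement edges are spanned by $W_3\setminus\{w_1\}$. Therefore
\[
d_{\overline{G}[U_3]}(w_2)\le 1+(s-k-t_1).
\]
On the other hand, the ordering gives $d_{U_2}(w_2)\le d_{U_2}(w_1)=t_1$. Taking the minimum of $t_1$ and $s-k+1-t_1$ yields $d_{U_2}(w_2)\le \frac{s-k+1}{2}$.

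For (ii), fix $w\in W_4$, so $N_G[w]=U_1\cup U_3$ by (\ref{spectral-spanning-tree-eq11}). Subtracting the eigen-equations (\ref{spectral-spanning-tree-eq15}) for $w$ and $w_2$ and rewriting $N_{U_3}(w_2)=U_3\setminus N_{\overline{G}[U_3]}[w_2]$ gives
\[
(\lambda+1)(x_w-x_{w_2})=\sum_{v\in N_{\overline{G}[U_3]}(w_2)}x_v-\sum_{v\in N_{U_2}(w_2)}x_v .
\]
\begin{itemize}
\item \emph{First sum (lower bound).} I bound it below by the single term $x_{w_1}=x_{w_0}$, using (\ref{spectral-spanning-tree-eq18-0}) and the fact that $w_1w_2\in E(\overline{G})$.
\item \emph{Second sum (upper bound).} Each $v\in N_{U_2}(w_2)$ lies in $W_2$. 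I will check that $z_0\in W_5$: $W_5\neq\emptyset$ because $e(U_2,U_3)=2s-2k<|U_2|$, and (\ref{spectral-spanning-tree-eq18-2}) shows that vertices of $W_5$ have smaller coordinates than those of $W_2$. Then (\ref{spectral-spanning-tree-eq18-2}) gives $x_v\le x_{z_0}+\frac{t_2}{\lambda+1}x_{w_0}$.
\end{itemize}
Combining these with (i) and dividing by $\lambda+1$ gives (\ref{n1e}).

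For the special case $t_1<s-k=3$, there are three complement edges, all inside $W_3$, and $|W_3|=t_2=t_1+1$. The case $t_1=1$ is impossible, since a $2$-set spans only one edge. So $t_1=2$, $t_2=3$, and $\overline{G}[U_3]$ is a triangle on $\{w_1,w_2,w_3\}$ with all complement degrees equal to $2$. By the nesting in Claim \ref{spectral-spanning-tree-claim8}(ii), equal degrees force $N_{U_2}(w_1)=N_{U_2}(w_2)=N_{U_2}(w_3)$. Thus $w_1,w_2,w_3$ have identical closed neighbourhoods, so their Perron coordinates coincide and equal $x_{w_0}$; this gives $x_{w_2}=x_{w_0}$.

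Redoing the estimate above, the first sum now equals exactly $2x_{w_0}$, and the second is at most $2\bigl(x_{z_0}+\frac{3}{\lambda+1}x_{w_0}\bigr)$. This yields (\ref{n2e}).

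The main obstacle is justifying $z_0\in W_5$ so that (\ref{spectral-spanning-tree-eq18-2}) applies, and the twin argument in the special case. Both seem straightforward given the earlier claims.
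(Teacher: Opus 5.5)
Your proposal is correct and follows the paper's proof essentially step by step. It uses the same count $d_{\overline{G}[U_3]}(w_1)+d_{\overline{G}[U_3]}(w_2)-1\le s-k$ for (i); the same identity $(\lambda+1)(x_w-x_{w_2})=\sum_{v\in N_{\overline{G}[U_3]}(w_2)}x_v-\sum_{v\in N_{U_2}(w_2)}x_v$, bounded via $x_{w_1}=x_{w_0}$ and (\ref{spectral-spanning-tree-eq18-2}), for (ii); and the triangle $\overline{G}[W_3]\cong K_3$ in the special case. You also make explicit two points the paper leaves tacit, namely $z_0\in W_5$ and $t_1\neq 1$.

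One small correction: $w_1,w_2,w_3$ are pairwise nonadjacent in $G$, so it is their open neighbourhoods, not their closed ones, that coincide. This still gives equal Perron coordinates.
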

\p
By Claim \ref{spectral-spanning-tree-claim9}(ii), we have
  $ 2d_{\overline{G}[U_3]}(w_2)-1\le d_{\overline{G}[U_3]}(w_1)+
  d_{\overline{G}[U_3]}(w_2)-1\le e(\overline{G}[U_3])$.
Combining this with $e(\overline{G}[U_3])=s-k$ in Claim \ref{spectral-spanning-tree-claim7}(iii),   we have
$d_{\overline{G}[U_3]}(w_2)\le \frac{s-k+1}{2}$.
As $d_{U_2}(w_2)=d_{\overline{G}[U_3]}(w_2)$ in Claim  \ref{spectral-spanning-tree-claim7}(ii), we have
$d_{U_2}(w_2)\le \frac{s-k+1}{2}$. This proves Claim \ref{spec-span-tree-claim10-1}(i).

By (\ref{spectral-spanning-tree-eq18-2}), we have for any $i\in \{1,2,\dots,t_1\}$,
$x_{u_i}\le x_{z_0}+ \frac{t_2 x_{w_0} }{\lambda + 1}$. This, together with
 (\ref{spectral-spanning-tree-eq15}), Claims    \ref{spectral-spanning-tree-claim7}(ii)  and \ref{spec-span-tree-claim10-1}(i),
yields that for any $w\in W_4$,
\begin{align*}
\lambda(G)x_{w}-\lambda(G)x_{w_2}=&\sum_{v\in N_{U_3}(w)}x_v-\sum_{v\in N_{U_3}(w_2)}x_v-\sum_{v\in N_{U_2}(w_2)}x_v
\\
=&\sum_{v\in U_3}x_v-x_w-\sum_{v\in N_{U_3}(w_2)}x_v-\sum_{v\in N_{U_2}(w_2)}x_v
\\
\ge & x_{w_2}-x_{w}+ \sum_{v\in N_{\overline{G}[U_3]}(w_2)}x_v - d_{U_2}(w_2) \left(x_{z_0}+ \frac{t_2 x_{w_0} }{\lambda + 1}\right)
\\
\ge & x_{w_2}-x_{w}+ \sum_{v\in N_{\overline{G}[U_3]}(w_2)}x_v- \frac{s-k+1}{2}\left(x_{z_0}+ \frac{t_2 x_{w_0} }{\lambda + 1}\right),
\end{align*}
which implies that
\begin{align}\label{spec-tree-eq1}
x_{w}-x_{w_2}\ge & \sum_{v\in N_{\overline{G}[U_3]}(w_2)}\frac{x_v}{\lambda + 1}
-\frac{s-k+1}{2(\lambda + 1)}x_{z_0}-\frac{t_2(s-k+1)}{2(\lambda + 1)^2}x_{w_0}.
\end{align}
As $w_1\notin N_G(w_2)$ by Claim \ref{spectral-spanning-tree-claim9}(ii), by (\ref{spec-tree-eq1}) and  $x_{w_1}=x_{w_0} $ in (\ref{spec-EDST-add-eq1}), we have \eqref{n1e} holds.

Particularly, if $t_1<s-k=3$, then by  Claim \ref{spectral-spanning-tree-claim9}(ii) and $t_1=d_{\overline{G}[U_3]}(w_1)$ in (\ref{spectral-spanning-tree-eq27-0}), we have  $\overline{G}[W_3]\cong K_3$.
Thus $N_{\overline{G}[U_3]}[w_2]=\{w_1,w_2,w_3\}=W_3$ and $N_{U_2}(w_1)=N_{U_2}(w_2)=N_{U_2}(w_3)=\{u_1,u_2\}=W_2$ by Claim \ref{spectral-spanning-tree-claim8}(i).
Combining this with   (\ref{spectral-spanning-tree-eq15}), we have  $x_{w_1}=x_{w_2}=x_{w_3}$. Thus by (\ref{spec-tree-eq1}), we get
\eqref{n2e} holds.
So Claim \ref{spec-span-tree-claim10-1}(ii) holds. This proves Claim \ref{spec-span-tree-claim10-1}.
\q


\begin{Claim}\label{spectral-spanning-tree-claim10}
  $t_1=s-k$.
\end{Claim}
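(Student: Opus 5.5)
We argue by contradiction. Suppose $t_1<s-k$. By \eqref{spectral-spanning-tree-eq19} we may assume $s-k\ge 2$, and by \eqref{spectral-spanning-tree-eq27-0} we have $t_2=t_1+1\le s-k$. Claims \ref{spectral-spanning-tree-claim8}(i), \ref{spectral-spanning-tree-claim7}(ii) and \ref{spectral-spanning-tree-claim9}(ii) then fix the local picture. The vertex $w_1$ is adjacent to all of $W_2=\{u_1,\dots,u_{t_1}\}$ and misses exactly $W_3\setminus\{w_1\}$ in $\overline{G}[U_3]$. Also $e(\overline{G}[U_3])=s-k$, while the star at $w_1$ accounts for only $t_1<s-k$ of these non-edges. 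Hence there is a non-edge $w_aw_b$ of $G[U_3]$ with $a,b\ge 2$, and by Claim \ref{spectral-spanning-tree-claim9}(ii) it lies inside $N_{\overline{G}[U_3]}[w_2]$.

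The plan is to perform an edge switching that moves this ``extra'' non-edge onto the star centred at $w_1$. Pick $w\in W_4$, which is nonempty by \eqref{spectral-spanning-tree-eq11-0}; then $ww_1\in E(G)$ by \eqref{spectral-spanning-tree-eq11}. Pick also $z'\in U_2\setminus W_2$. Define
\[
G_1=G-w_1w+ww_a\ \text{or}\ G-w_1w-w_2u_j+z'w_1+\cdots,
\]
choosing the simplest variant that keeps every vertex of $U_3$ of degree $s$ and keeps $e(U_2,U_3)=2s-2k$, so that $G_1\in\mathcal{H}_1(s,k)$. The concrete variant I would try first is
\[
G_1=G-ww_1+z'w_1+w_aw_b-w_a u,
\]
where $u\in N_{U_2}(w_a)$. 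This variant keeps all degrees in $U_3$ equal to $s$, so by Lemma \ref{EDST-connect-le1} $G_1$ is $s$-edge-connected. Since $e_{G_1}(\pi)=e_G(\pi)$, also $\tau(G_1)\le k-1$.

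To compare spectral radii, compute ${\mathbf y}^T(A(G_1)-A(G)){\mathbf x}$ exactly as in the proof of Claim \ref{spectral-spanning-tree-claim9}(i), grouping the terms into products of differences. The needed estimates are already available:
\begin{itemize}
\item $x_{z_0}-x_{w_0}$ is large, by \eqref{spectral-spanning-tree-eq17-1}, and similarly for $\mathbf y$;
\item $x_u-x_{z'}\le t_2x_{w_0}/(\lambda+1)$, by \eqref{spectral-spanning-tree-eq18-2};
\item $x_{w_1}=x_{w_0}$, and $x_w$ versus $x_{w_2}$ is controlled by Claim \ref{spec-span-tree-claim10-1}(ii).
\end{itemize}
The dominant positive term should be of the form $(x_{z'}-x_w)(y_{w_1}-\cdots)$, which is of order $n\,x_{w_0}y_{w_0^1}/(\lambda+1)$. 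The negative terms are $O(s)/(\lambda+1)$ times $x_{w_0}y_{w_0^1}$. With $n\ge 3s^2$ the positive term wins, giving $\lambda(G_1)>\lambda(G)$ and contradicting the maximality of $G$.

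The main obstacle is the small case $t_1=2$, $s-k=3$, singled out in \eqref{n2e}. There $\overline{G}[W_3]\cong K_3$, so $x_{w_1}=x_{w_2}=x_{w_3}$. The switching must then be chosen so that the first-order term involving $x_w-x_{w_0}$ does not cancel. For this case I would use \eqref{n2e} directly, which shows $x_w<x_{w_0}$ by an amount of order $x_{z_0}/(\lambda+1)$. The switching should be oriented so that this difference appears with a favorable sign. The general case then uses \eqref{n1e} together with Claim \ref{spec-span-tree-claim10-1}(i).
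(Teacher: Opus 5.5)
There is a genuine gap, and it has two layers.

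First, the concrete switching is not admissible. In $G_1=G-ww_1+z'w_1+w_aw_b-w_au$ the vertex $w$ loses $w_1$ and gains nothing, so $d_{G_1}(w)=s-1$, while $d_{G_1}(w_b)=s+1$. Hence $\delta(G_1)<s$, $G_1\notin\mathcal{H}_1(s,k)$, and $G_1$ is not $s$-edge-connected, contrary to your assertion that all degrees in $U_3$ stay equal to $s$. Restoring the degrees forces you to add an edge such as $wu_1$ and delete some $w_bu'$ with $u'\in N_{U_2}(w_b)$. That is exactly one six-edge block of the paper's switching \eqref{spec-EDST-eq8}.

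Second, and more seriously, a single such block need not increase $\lambda$, so no one-step local switch can give the contradiction. Summing the eigen-equations over the clique $V_p$ gives $\lambda-(|V_p|-1)=\frac{\sum_{w\in U_3}d_{V_p}(w)x_w}{\sum_{v\in V_p}x_v}$. Normalize the entries on $V_p$ to $1+o(1)$; then $x_w=(d_{V_p}(w)+o(1))/\lambda$ on $U_3$. Hence, for fixed $s,k$, $\lambda=|V_p|-1+|V_p|^{-2}\sum_{w\in U_3}d_{V_p}(w)^2+O(n^{-3})$. Here $d_{V_p}(w)=2k-s-1+d_{\overline{G}[U_3]}(w)$, and $\sum_w d_{\overline{G}[U_3]}(w)=2(s-k)$ is fixed. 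So to leading order $\lambda$ is governed by $\sum_w d_{\overline{G}[U_3]}(w)^2$. Moving the non-edge $w_aw_b$ onto the star at $w_1$ changes this sum by $2(t_1+2-d_a-d_b)$, where $d_a,d_b$ are the degrees of $w_a,w_b$ in $\overline{G}[U_3]$, and this can be negative.

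Concretely, take $s-k=6$, $t_1=3$, $\overline{G}[U_3]\cong K_4$ on $W_3$, and $N_{U_2}(w_i)=\{u_1,u_2,u_3\}$ for $i\le 4$. This is compatible with everything established before Claim \ref{spectral-spanning-tree-claim10}; for instance $d_{U_2}(w_2)=3\le\frac{s-k+1}{2}$. Yet relocating any single non-edge of $G[U_3]$ lowers the sum from $36$ to at most $34$, while $H_1(s,k)$ has $42$. So for large $n$, every graph obtained from $G$ by one such relocation has smaller spectral radius than $G$.

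This also shows your size heuristic is wrong. The terms of order $x_{z_0}$ times a difference of $U_3$-entries occur with both signs and nearly cancel; there is no single order-$n$ positive term beating $O(s)$ negative ones.

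The paper avoids this by performing all $s-k-t_1$ blocks at once, landing on $G_2\cong H_1(s,k)$. It then evaluates $\boldsymbol{\eta}^T(A(G_2)-A(G))\mathbf{x}$ with the explicitly computed Perron vector of $H_1(s,k)$. The gain is $\eta_{w_1}-\eta_{w_2}=\frac{s-k-1}{\lambda_2}(\eta_{u_2}-\eta_{w_2})$ from \eqref{spec-span-tree-eq10}. The loss is controlled by $d_{U_2}(w_2)\le\frac{s-k+1}{2}$ from Claim \ref{spec-span-tree-claim10-1}(i). Together these leave a leading coefficient $\frac{s-k-3}{2}$. For $s-k=3$ this coefficient vanishes, since the triangle and the star both give $\sum d^2=12$. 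That is why Case~2 needs the second-order bound \eqref{n2e}, used as a lower bound on $x_w-x_{w_0}$, i.e.\ to show $x_w$ is not too small. Your suggestion to orient the switch so that this difference has a favorable sign does not address that cancellation.
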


\p
If $0\le s-k\le 2$, then by Claims  \ref{spectral-spanning-tree-claim8}(ii) and \ref{spectral-spanning-tree-claim9}(ii),
we have $t_1=s-k$. Thus we assume that $s-k\ge 3$.
On the contrary, suppose that $t_1\neq s-k$ when $s-k\ge 3$.
As $t_1\neq s-k$, by (\ref{spectral-spanning-tree-eq19}) and  (\ref{spectral-spanning-tree-eq27-0}), we have
\begin{align}\label{spectral-spanning-tree-eq27}
 1\le t_1<s-k \text{~and~}  t_2=t_1+1\le s-k.
\end{align}
This, together with Claims  \ref{spectral-spanning-tree-claim7}(iii) and \ref{spectral-spanning-tree-claim9}(ii), yields that $N_{\overline{G}[U_3]}(w_1)\neq \emptyset$.

For any $j\in \{1,2,\ldots,t_2\}$, let $d_{U_2}(w_j)=r_j$.
By Claim \ref{spectral-spanning-tree-claim8}(ii), we have for any $j\in \{1,2,\ldots,t_2\}$,
\begin{equation}\label{EDST-th1-eq80}
  \text{$N_{U_2}(w_j)=\{u_1,u_2,\ldots,u_{r_j}\} $ with $t_1=r_1\ge r_2\ge \cdots \ge r_{t_2}$.}
\end{equation}
Thus
\begin{equation}\label{EDST-th1-eq84}
 \bigcup_{j=2}^{t_2-1}\{w_{j}u_i : j\le i\le r_j\}\subseteq E(G).
\end{equation}
By \eqref{EDST-th1-eq80}, Claims  \ref{spectral-spanning-tree-claim7}(ii) and \ref{spectral-spanning-tree-claim9}(ii), we have
$N_{\overline{G}[U_3]}(w_j)=\{w_1,\ldots,w_{j-1},w_{j+1},
\ldots,w_{r_j+1}\}$ for any $j\in \{1,2,\ldots,t_2\}$. 
Combining this with Claim \ref{spectral-spanning-tree-claim9}(ii), we get for any $j\in \{1,2,\ldots,t_2\}$,
\[
\{w_{j+1},\ldots,w_{r_j+1}\}\subseteq  N_{\overline{G}[U_3]}(w_j)\subseteq N_{\overline{G}[U_3]}[w_{j-1}]\subseteq \cdots \subseteq  N_{\overline{G}[U_3]}[w_{1}].
\]
Thus for any $w\in \{w_{j+1},\ldots,w_{r_j+1}\}$, we have $d_{\overline{G}[U_3]}(w)\ge j$. This, together with Claim \ref{spectral-spanning-tree-claim7}(v) and $d_{\overline{G}[U_3]}(w)=d_{U_2}(w)$ in Claim \ref{spectral-spanning-tree-claim7}(ii), yields that
\[
N_{\overline{G}[U_3]}[w_j]=\{w_1,\ldots,w_{j-1},w_{j},w_{j+1},
\ldots,w_{r_j+1}\}  \subseteq N_{U_3}(u_j).
\]
As $t_1=r_1\ge r_2\ge \cdots \ge r_{t_2}$, we have for any $j\in \{1,2,\ldots,t_2\}$,
\begin{equation*}
N_{\overline{G}[U_3]}[w_j]=\{w_1,\ldots,w_{j-1},w_{j},w_{j+1},
\ldots,w_{r_j+1}\} = N_{U_3}(u_j).
\end{equation*}
Therefore,
\begin{equation}\label{EDST-th1-eq85}
 \bigcup_{j=2}^{t_2-1}\{w_{i+1}u_j : j\le i\le r_j\}\subseteq E(G).
\end{equation}
Let  $W_2^j=N_{\overline{G}[U_3]}(w_j)\setminus \{w_1,\ldots,w_{j-1}\}$ for any $j\in \{1,2,\ldots,t_2-1\}$. Then
\begin{equation*}
  \text{$W_2^j=\{w_{j+1},\ldots,w_{r_j+1}\}\subseteq N_{\overline{G}[U_3]}(w_j) \subseteq W_3$  and
$|W_2^j|=r_j-j+1$ for any $j\in \{1,2,\ldots,t_2-1\}$.}
\end{equation*} 
Thus for each $j\in\{1,2,\ldots,t_2-1\}$, we have
\begin{equation}\label{EDST-th1-eq81}
\{w_{i+1}w_j : j\le i\le r_j\}
\subseteq E(\overline{G}).
\end{equation}
Therefore,
\begin{equation}\label{EDST-th1-eq86}
\overline{G}[W_3]-\bigcup_{j=2}^{t_2-1}\{w_{i+1}w_j : j\le i\le r_j\}=K_{1,t_2-1},
\end{equation}
where $K_{1,t_2-1}$ is a star with center $w_1$
and leaves $w_2,w_3,\ldots,w_{t_2}$.

By Claim \ref{spectral-spanning-tree-claim9}(ii), we have
$\sum_{1\le j\le t_2-1}|W_2^j|= e(\overline{G}[U_3])$.
As $e(\overline{G}[U_3])=s-k$ by Claim \ref{spectral-spanning-tree-claim7}(i), we have
\begin{equation}\label{spec-span-tree-eq1}
\sum_{2\le j\le t_2-1}|W_2^j|= e(\overline{G}[U_3])-|W_2^1|=s-k-t_1<s-k.
\end{equation}
By Claims \ref{spectral-spanning-tree-claim3} and \ref{spectral-spanning-tree-claim4}, we have
$|U_3|=p-1=2s-2k+2$.
Combining this with (\ref{spectral-spanning-tree-eq27}), we have
\begin{equation}\label{spec-span-tree-eq2}
|N_{U_3}(w_1)|=|U_3|-|W_3|=
2s-2k+2-t_2\ge s-k+2.
\end{equation}
By Claim \ref{spectral-spanning-tree-claim9}(ii), we have $N_{U_3}(w_{i})\subseteq
N_{U_3}[w_{i+1}]$,  which implies that every vertex of $N_{U_3}(w_{i})$ is adjacent to $w_{i+1}$ for any $i\in \{1,2,\ldots,t_2-1\}$. Combining this with (\ref{spec-span-tree-eq1}) and (\ref{spec-span-tree-eq2}),   for every $j\in \{2,3,\ldots,t_2-1\}$, we can choose vertex set
\begin{equation}\label{EDST-th1-eq87}
  \text{$V^j=\{v_j^j,v_{j+1}^j,\dots,v_{r_j}^j\}\subseteq N_{U_3}(w_1)\setminus W_3=N_{U_3}(w_1) \subseteq W_4$ with  $|V^j|=|W_2^j|, $}
\end{equation}
such that the sets $V^2,V^3,\ldots,V^{t_2-1}$ are pairwise disjoint. 
By \eqref{EDST-th1-eq87} and $N_G(u_1)\cap W_4 =\emptyset$, we have
\begin{equation}\label{EDST-th1-eq88}
\bigcup_{j=2}^{t_2-1}\{w_1v_i^j: j\le i\le r_j\}\subseteq E(G) \text{~and~} \bigcup_{j=2}^{t_2-1}\{u_1v_i^j: j\le i\le r_j\}\subseteq E(\overline{G}).
\end{equation}

As $n\ge 3s^2$ and $5\le k+3\le s\le 2k-2$,
by (\ref{spectral-spanning-tree-eq27}), we have
$ |N_{\overline{G}[V_p]}(w_1)|= |U_2|- |W_2|=n-s-1-t_1> n-s-1-(s-k)\ge 3s^2-2s+k-1=3s(s-1)+2k-1+s-k >s-k$. Combining this with (\ref{spec-span-tree-eq1}) and Claim \ref{spectral-spanning-tree-claim8}(ii),   for every $j\in \{2,\ldots,t_2-1\}$,  we can choose vertex set
\begin{equation*}
\text{ $Z^j=\{z_j^j,z_{j+1}^j,\dots,z_{r_j}^j\}\subseteq N_{\overline{G}[V_p]}(w_1)\subseteq N_{\overline{G}[V_p]}(w_j)\subseteq W_5$ with  $|Z^j|=|W_2^j|$,}
\end{equation*}
such that the sets  $Z^2,Z^3,\ldots,Z^{t_2-1}$ are pairwise disjoint. Thus \begin{equation}\label{EDST-th1-eq89}
\bigcup_{j=2}^{t_2-1}\{w_1z_i^j: j\le i\le r_j\}\subseteq  E(\overline{G}).
\end{equation}

Let
\begin{align}\label{spec-EDST-eq8}
G_2=G+\sum_{2\le j\le t_2-1}\Big(\sum_{j\le i\le r_j}(z_i^jw_1-w_1v_i^j+v_i^ju_1-u_iw_j+w_{i+1}w_j-w_{i+1}u_j)\Big).
\end{align}
Then we have $G_2=H_1(s,k)\in \mathcal{H}_1(s,k)$ and $e(G_2)=e(G)$ by \eqref{EDST-th1-eq84}, \eqref{EDST-th1-eq85}, \eqref{EDST-th1-eq81}, \eqref{EDST-th1-eq88} and \eqref{EDST-th1-eq89}.  Combining these with Lemma \ref{EDST-connect-le2} and Claim \ref{spectral-spanning-tree-claim7}(iii), we have
\begin{align}\label{EDST-eigenvalue-le1}
  n-2s+2k-3<\lambda(G_2)<n-2s+2k-2.
\end{align}

Let $W_6=(\cup_{2\le j\le t_2-1} Z^j) \cup (W_2\setminus\{u_1\})$,
$W_7=U_2\setminus (\{u_1\}\cup W_6)$, $W_8=(\cup_{2\le j\le t_2-1} V^j)\cup (W_3\setminus \{w_1\})$ and $W_9=U_3\setminus (W_8\cup \{w_1\})$. Then we have $|W_6|=s-k-1$, $|W_7|=n-2s-1+k$, $|W_8|=s-k$ and $|W_9|=s-k+1$.

Let $\boldsymbol{\eta}$ be the Perron vector of $A(G_2)$ and  $\eta_v$ be the coordinate in $\boldsymbol{\eta}$  corresponding to  vertex $v\in V(G_2)$.
For short, write $\lambda_2$ for $\lambda(G_2)$. Let $u'\in U_1$, $u_0\in W_7$ and $v_0\in W_9$.
As $G_2=H_1(s,k)$, by $A(G_2)\boldsymbol{\eta}=\lambda_2\boldsymbol{\eta}$, we
have
\begin{align*}
& \eta_{u}=\eta_{u'} \text{~for any~} u\in U_1,
\\
& \eta_{u}=\eta_{u_2}  \text{~for any~} u\in W_6,
\\
& \eta_{u}=\eta_{u_0}  \text{~for any~} u\in W_7,
\\
& \eta_{w}=\eta_{w_2} \text{~for any~} w\in W_8,
\\
 & \eta_{w}=\eta_{v_0}   \text{~for any~}  w\in W_9.
\end{align*}
Moreover, we have
\begin{align}
& \lambda_2\eta_{u_1}=|U_1|\eta_{u'}+(s-k-1)\eta_{u_2}+(n-2s-1+k)\eta_{u_0}
                       +\eta_{w_1}+(s-k)\eta_{w_2},
\label{spec-span-tree-eq3}\\
& \lambda_2\eta_{u_2}=|U_1|\eta_{u'}+\eta_{u_1}+(s-k-2)\eta_{u_2}
                       +(n-2s-1+k)\eta_{u_0} +\eta_{w_1},
\label{spec-span-tree-eq4}\\
& \lambda_2\eta_{u_0}=|U_1|\eta_{u'}+\eta_{u_1}+(n-2s-2+k)\eta_{u_0}
                       +(s-k-1)\eta_{u_2},
\label{spec-span-tree-eq5}\\
& \lambda_2\eta_{w_1}=|U_1|\eta_{u'}+\eta_{u_1}+(s-k-1)\eta_{u_2}
                       +(s-k+1)\eta_{v_0},
\label{spec-span-tree-eq6}\\
&\lambda_2\eta_{w_2}=|U_1|\eta_{u'}+\eta_{u_1}+(s-k-1)\eta_{w_2}
                      +(s-k+1)\eta_{v_0},
\label{spec-span-tree-eq7}\\
&\lambda_2\eta_{v_0}=|U_1|\eta_{u'}+\eta_{w_1}+(s-k)\eta_{w_2}
                      +(s-k)\eta_{v_0}.
\label{spec-span-tree-eq8}
\end{align}

Let $\eta_{z_1}=\max\{\eta_{w_1},\eta_{w_2},\eta_{v_0}\}$. By \eqref{spec-span-tree-eq6}--\eqref{spec-span-tree-eq8}, we have
\begin{align*}
\lambda_2\eta_{z_1}\le |U_1|\eta_{u'}+\eta_{u_1}+(s-k-1)\eta_{u_2}
                       +(s-k+1)\eta_{v_0} +(s-k)\eta_{w_2}+\eta_{w_1},
\end{align*}
Combining this with \eqref{spec-span-tree-eq5}, we have
\begin{align*}
\lambda_2\eta_{u_0}-\lambda_2\eta_{z_1}
\ge & (n-2s-2+k)\eta_{u_0}- (s-k+1)\eta_{v_0} -(s-k)\eta_{w_2}-\eta_{w_1}
\\
\ge & (n-2s-2+k)\eta_{u_0}- 2(s-k+1)\eta_{z_1}.
\end{align*}
Thus  by $n\ge 3s^2$, $k+3\le s\le 2k-2$  and $n-2s+2k-3<\lambda_2$, we have
\begin{align*}
\eta_{u_0} \ge \frac{\lambda_2-2s+2k-2}{\lambda_2-n+2s+2-k}\eta_{z_1}>\eta_{z_1}.
\end{align*}
This, together with (\ref{spec-span-tree-eq3})--(\ref{spec-span-tree-eq8}), we have \begin{align}\label{spec-span-tree-eq0}
\eta_{u_1}>\eta_{u_2}>\eta_{u_0}>\eta_{z_1}=\eta_{w_1}>\eta_{w_2}>\eta_{v_0}.
\end{align}
By  (\ref{spec-span-tree-eq3}) and (\ref{spec-span-tree-eq4}), we have
\begin{align}\label{spec-span-tree-eq9}
\eta_{u_1} = \eta_{u_2}+\frac{s-k}{\lambda_2+1}\eta_{w_2}.
\end{align}
By (\ref{spec-span-tree-eq6}) and (\ref{spec-span-tree-eq7}), we have
\begin{align}\label{spec-span-tree-eq10}
\eta_{w_1} = \eta_{w_2}+\frac{s-k-1}{\lambda_2}(\eta_{u_2}-\eta_{w_2}).
\end{align}
By (\ref{spec-span-tree-eq5}), (\ref{spec-span-tree-eq7})  and  (\ref{spec-span-tree-eq0}), we have
\begin{align*}
\lambda_2\eta_{u_0}-\lambda_2\eta_{w_2}
=&(n-2s-2+k)\eta_{u_0} +(s-k-1)\eta_{u_2}
  -(s-k-1)\eta_{w_2} -(s-k+1)\eta_{v_0}
 \\
\ge & (n-s-3)\eta_{u_0}-(2s-2k)\eta_{w_2}.
\end{align*}
Combining this with $n\ge 3s^2$, $k+3\le s\le 2k-2$ and $n-2s+2k-3<\lambda_2<n-2s+2k-2$, we get that
\begin{align}\label{spec-span-tree-eq11}
\eta_{u_0}\ge \frac{\lambda_2-2s+2k}{\lambda_2-n+s+3}\eta_{w_2}
\ge \frac{n-4s+4k-3}{2k-s+1}\eta_{w_2}>2\eta_{w_2}.
\end{align}

As $n-2s+2k-3<\lambda<n-2s+2k-2$ and $n-2s+2k-3<\lambda_2<n-2s+2k-2$, we have
$\lambda+1>\lambda_2$. Combining this with (\ref{spectral-spanning-tree-eq18-2}), (\ref{spectral-spanning-tree-eq18-0}), (\ref{spectral-spanning-tree-eq27}), (\ref{spec-span-tree-eq9}), and (\ref{spec-span-tree-eq10}),   we have
\begin{align} \label{spectral-spanning-tree-eq66}
 &\eta_{z_i^j}x_{w_1}+\eta_{w_1}x_{z_i^j}
-\eta_{w_1}x_{v_i^j}-\eta_{v_i^j}x_{w_1}+\eta_{v_i^j}x_{u_1}+\eta_{u_1}x_{v_i^j}
-\eta_{u_{i}}x_{w_j}-\eta_{w_j}x_{u_{i}}+\eta_{w_j}x_{w_{i+1}}+
\nonumber\\
&~~ \eta_{w_{i+1}}x_{w_j}
  -\eta_{w_{i+1}}x_{u_j}-\eta_{u_j}x_{w_{i+1}}
\nonumber
\\ 
 =&
 \eta_{u_2}x_{w_1}+\eta_{w_1}x_{z_i^j}
-\eta_{w_1}x_{v_i^j}-\eta_{w_2}x_{w_1}+\eta_{w_2}x_{u_1}+\eta_{u_1}x_{v_i^j}
-\eta_{u_2}x_{w_j}-\eta_{w_2}x_{u_{i}}+\eta_{w_2}x_{w_{i+1}}+
\nonumber\\
& ~~\eta_{w_2}x_{w_j}
  -\eta_{w_2}x_{u_j}-\eta_{u_2}x_{w_{i+1}}
\nonumber
\\
=&
(x_{w_1}-x_{w_{i+1}})(\eta_{u_2}-\eta_{w_2})+
\eta_{w_2}(x_{u_1}-x_{u_{i}}-x_{u_j}+x_{w_j})+
 \eta_{w_1}(x_{z_i^j}-x_{v_i^j})
+\eta_{u_1}x_{v_i^j}-x_{w_j}\eta_{u_2}
\nonumber
\\
\ge &
\eta_{w_2}(-x_{u_j}+x_{w_j})+ \eta_{w_1}(x_{z_i^j}-x_{v_i^j})
+\eta_{u_1}x_{v_i^j}-x_{w_j}\eta_{u_2}
\nonumber
\\
= & (
\eta_{w_2}(-x_{u_j}+x_{w_j})+  \bigl(\eta_{w_2}+\frac{s-k-1}{\lambda_2}(\eta_{u_2}-\eta_{w_2})\bigr)
(x_{z_i^j}-x_{v_i^j})
 +(\eta_{u_2}+\frac{s-k}{\lambda_2+1}\eta_{w_2})x_{v_i^j}-x_{w_j}\eta_{u_2}
\nonumber\\
= &
(\eta_{u_2}-\eta_{w_2})\bigl(\frac{s-k-1}{\lambda_2}(x_{z_i^j}-x_{v_i^j})
+x_{v_i^j}-x_{w_j}\bigr)+\eta_{w_2}(x_{z_i^j}-x_{u_j}+ \frac{s-k}{\lambda_2+1}x_{v_i^j})
\nonumber\\
\ge &
(\eta_{u_2}-\eta_{w_2})\bigl(\frac{s-k-1}{\lambda_2}(x_{z_0}-x_{w_0'})
+x_{v_i^j}-x_{w_2}\bigr)+\eta_{w_2}(x_{z_i^j}-x_{u_j})
\nonumber\\
\ge &
(\eta_{u_0}-\eta_{w_2})\bigl(\frac{s-k-1}{\lambda_2}(x_{z_0}-x_{w_0'})
+x_{v_i^j}-x_{w_2}\bigr)-\frac{t_2 }{\lambda + 1}x_{w_0}\eta_{w_2}
\nonumber\\
> &
(\eta_{u_0}-\eta_{w_2})\bigl(\frac{s-k-1}{\lambda_2}(x_{z_0}-x_{w_0'})
+x_{v_i^j}-x_{w_2}\bigr)-\frac{s-k }{\lambda_2}x_{w_0}\eta_{w_2}.
\end{align}

\noindent
{\bf Case 1. $s-k\ge 4$.}
As $n\ge 3s^2$ and $s\le 2k-2$,
by Claim \ref{spectral-spanning-tree-claim7}(iv), Claim \ref{spec-span-tree-claim10-1}(ii),  (\ref{spectral-spanning-tree-eq17-1}), (\ref{spectral-spanning-tree-eq27}), and   (\ref{EDST-eigenvalue-le1}), we have for any $w\in W_4$,
\begin{align}\label{spec-span-tree-eq12}
 & \frac{s-k-1}{\lambda_2}(x_{z_0}-x_{w_0'}) +x_{w}-x_{w_2}
\nonumber\\
\ge & \frac{s-k-1}{\lambda_2}(x_{z_0}-x_{w_0}) +x_{w}-x_{w_2}
 \nonumber\\
\ge & \frac{s-k-1}{\lambda_2}(x_{z_0}-x_{w_0}) + \frac{1}{\lambda + 1}x_{w_0} -\frac{s-k+1}{2(\lambda + 1)}x_{z_0}-\frac{t_2(s-k+1)}{2(\lambda + 1)^2}x_{w_0}
\nonumber\\
= & (\frac{s-k-1}{\lambda_2}-\frac{s-k+1}{2(\lambda + 1)})x_{z_0}+
(\frac{1}{\lambda + 1}-\frac{s-k-1}{\lambda_2} -\frac{t_2(s-k+1)}{2(\lambda + 1)^2})x_{w_0}
\nonumber\\
\ge & (\frac{s-k-1}{\lambda_2}-\frac{s-k+1}{2\lambda_2})x_{z_0}+
(\frac{1}{\lambda + 1}-\frac{s-k-1}{\lambda_2} -\frac{(s-k)(s-k+1)}{2(\lambda + 1)^2})x_{w_0}
\nonumber\\
\ge & \frac{s-k-3}{2\lambda_2}x_{z_0}+
(\frac{1}{\lambda + 1}-\frac{s-k-1}{\lambda_2} -\frac{(k-2)(k-1)}{2(\lambda+1)^2})x_{w_0}
\nonumber\\
> & \frac{s-k-3}{2\lambda_2}x_{z_0} -\frac{s-k-1}{\lambda_2} x_{w_0}
\nonumber\\
\ge & (\frac{(s-k-3)(n-4s+4k-3)}{2s\lambda_2}-\frac{s-k-1}{\lambda_2} )x_{w_0}
\nonumber\\
\ge & (\frac{3s^2-4k+5}{2s\lambda_2}-\frac{s-k-1}{\lambda_2}) x_{w_0}
\nonumber\\
> & (\frac{s}{\lambda_2}-\frac{s-k-1}{\lambda_2}) x_{w_0}
\ge \frac{s-k}{\lambda_2} x_{w_0}.
\end{align}
Combining this with \eqref{spec-EDST-eq8}, \eqref{spectral-spanning-tree-eq66},  and $\eta_{u_0}>2\eta_{w_2}$ by \eqref{spec-span-tree-eq11},  we have $${\boldsymbol{\eta}}^T (\lambda(G_2)-\lambda(G)) {\mathbf x}>\frac{s-k }{\lambda_2}x_{w_0}\eta_{w_2}-\frac{s-k }{\lambda_2}x_{w_0}\eta_{w_2}=0,$$
and thus  $\lambda(G_2)>\lambda(G)$.
This, together with $G_2$ is $s$-edge-connected by Lemma \ref{EDST-connect-le1}, yields that $t_1<s-k$ is impossible. Thus we have $t_1=s-k$ in this case.

\noindent
{\bf Case 2. $s-k= 3$.}  As $n\ge 3s^2$ and $k+3=s\le 2k-2$,
by Claim \ref{spectral-spanning-tree-claim7}(iv), Claim \ref{spec-span-tree-claim10-1}(ii),  (\ref{spectral-spanning-tree-eq17-1}), (\ref{spectral-spanning-tree-eq18-0}) and  (\ref{EDST-eigenvalue-le1}),    we have for any $w\in W_4$,
\begin{align}\label{spec-span-tree-eq12}
 & \frac{s-k-1}{\lambda_2}(x_{z_0}-x_{w_0'}) +x_{w}-x_{w_2}
\nonumber\\
 =&
  \frac{2}{\lambda_2}(x_{z_0}-x_{w})+x_{w}-x_{w_0}
 \nonumber\\
 =&
     \frac{2}{\lambda_2}x_{z_0} +\frac{\lambda_2-2}{\lambda_2}x_{w}
      -x_{w_0}
 \nonumber\\
\ge &
       \frac{2}{\lambda_2}x_{z_0} +\frac{\lambda_2-2}{\lambda_2}
       \Big(x_{w_0}+ \frac{2}{\lambda + 1}x_{w_0} -\frac{2}{\lambda + 1}x_{z_0}-\frac{6}{(\lambda + 1)^2}x_{w_0} \Big)
       -x_{w_0}
\nonumber\\
\ge &
       \frac{2(\lambda -\lambda_2+ 3)}{\lambda_2(\lambda + 1)}x_{z_0} +\frac{\lambda_2-2}{\lambda_2}
       \Big( \frac{2}{\lambda + 1}x_{w_0}-\frac{6}{(\lambda + 1)^2}x_{w_0} \Big)
       -\frac{2}{\lambda_2}x_{w_0}
\nonumber\\
\ge &
       \frac{4}{\lambda_2(\lambda + 1)}x_{z_0} +\frac{\lambda_2-2}{\lambda_2}
       \Big( \frac{2}{\lambda + 1} -\frac{6}{(\lambda + 1)^2}\Big)x_{w_0}
       -\frac{2}{\lambda_2}x_{w_0}
\nonumber\\
= &
       \frac{4}{\lambda_2(\lambda + 1)}x_{z_0} +
      \frac{2(\lambda_2+2)-8}{\lambda_2(\lambda + 1)}x_{w_0} -\frac{6(\lambda_2-2)}{\lambda_2(\lambda + 1)^2}x_{w_0}
       -\frac{2}{\lambda_2}x_{w_0}
\nonumber\\
\ge &
       \frac{4}{\lambda_2(\lambda + 1)}x_{z_0} -
      \frac{8}{\lambda_2(\lambda + 1)}x_{w_0} -\frac{6(\lambda_2-2)}{\lambda_2(\lambda + 1)^2}x_{w_0}
\nonumber\\
      \ge &
       \frac{4(n-4s+4k-3)}{s\lambda_2(\lambda + 1)}x_{w_0} -
      \frac{14}{\lambda_2(\lambda + 1)}x_{w_0}
\nonumber\\
\ge &
       \frac{4(3s^2-4k+5)}{s\lambda_2(\lambda + 1)}x_{w_0} -
      \frac{14}{\lambda_2(\lambda + 1)}x_{w_0}
\nonumber\\
\ge &
       \frac{8s}{\lambda_2(\lambda + 1)}x_{w_0} -
      \frac{14}{\lambda_2(\lambda + 1)}x_{w_0}
\nonumber\\
\ge &
       \frac{4s}{\lambda_2(\lambda + 1)}x_{w_0},
\end{align}
Recall that $\lambda<n-2s+2k-2$ and $k+3= s\le 2k-2$, we have $4(n-4s+4k-3)>3(\lambda+1)$.
Combining this with (\ref{spec-span-tree-eq11}) and \eqref{spec-span-tree-eq12}, we have
for any $w\in W_4$,
\begin{align*}
&(\eta_{u_0}-\eta_{w_2})\Big(\frac{s-k-1}{\lambda_2}(x_{z_0}-x_{w_0'}) +x_{w}-x_{w_2}\Big)
\\
\ge& \frac{4s(n-3s+2k-4)}{\lambda_2(2k-s+1)(\lambda + 1)}x_{w_0}\eta_{w_2}
\\
>& \frac{4(n-3s+2k-4)}{\lambda_2(\lambda + 1)}x_{w_0}\eta_{w_2}\\
>&\frac{3}{\lambda_2} x_{w_0}\eta_{w_2}= \frac{s-k}{\lambda_2} x_{w_0}\eta_{w_2}.
\end{align*}
Then by \textcolor{blue}{ \eqref{spec-EDST-eq8} and \eqref{spectral-spanning-tree-eq66}, }    we have $\lambda(G_2)>\lambda(G)$.
This, together with $G_2$ is $s$-edge-connected by Lemma \ref{EDST-connect-le1}, yields that $t_1<s-k$ is impossible.
Thus we have $t_1=s-k$, and so Claim \ref{spectral-spanning-tree-claim10} holds.
\q
\par\medskip
\noindent{\bf Proof of Theorem \ref{spectral-spanning-tree-thm1}:}
By \eqref{spectral-spanning-tree-eq19}, (\ref{spectral-spanning-tree-eq27-0}), Claims \ref{spectral-spanning-tree-claim8}, \ref{spectral-spanning-tree-claim7} and \ref{spectral-spanning-tree-claim10},  we have $G=H_1(s,k)$. As
$G$ is the graph with the maximum spectral radius  among all $s$-edge-connected graphs satisfying $\tau(G)\le k-1$, we get Theorem \ref{spectral-spanning-tree-thm1} holds.
\q

\subsection{Proof of Theorem \ref{spectral-spanning-tree-thm2} }

As $K_{n-2k-1}\varsubsetneqq H_2(k)$, we have $\lambda(G)>\lambda (K_{n-2k-1})= n-2k-2$.
Combining this with Lemma \ref{le3-1}, we get
\[
n-2k-2<\lambda(G)\le
\frac{2k-2+\sqrt{4k^2+4(2e(G)-2kn+n)}}{2},
\]
which implies
\begin{align}\label{specST-th2-eq0}
e(G)>
\frac{(2n-6k-2)^2-4k^2+8kn-4n}{8}
=
\frac{1}{2} n^2-2kn-\frac{3}{2}n +4k^2+3k+\frac{1}{2}.
\end{align}
Combining \eqref{spectral-spanning-tree-eq-add1} with (\ref{specST-th2-eq0}), we have
\[
\binom{n-p+1}{2}+k(p-1)-1
>
\frac{1}{2} n^2-2kn-\frac{3}{2}n +4k^2+3k+\frac{1}{2},
\]
which is equivalent to
\begin{align}\label{specST-th2-eq3-0}
f(p)>0, \text{ where } f(p)=p^2+(2k-2n-1)p+4kn+4n-8k^2-8k-3.
\end{align}


\begin{Claim}\label{specST-th2-claim2}
 $p=|\pi|=2k+2$.
\end{Claim}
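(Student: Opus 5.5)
We need to prove $p=2k+2$, given $p\ge 2k+2$ from Claim \ref{spectral-spanning-tree-claim1}(i) (case $s=2k-1$). So it suffices to show that $p\ge 2k+3$ leads to a contradiction with \eqref{specST-th2-eq3-0}.

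Suppose $p\ge 2k+3$. The function $f(p)=p^2+(2k-2n-1)p+4kn+4n-8k^2-8k-3$ is a convex quadratic in $p$. Also $p\le n$, so on $[2k+3,n]$ we have $f(p)\le\max\{f(2k+3),f(n)\}$. I will compute both endpoint values and show they are negative under $n\ge 8k^2-2k$.

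At the left endpoint, expanding gives
\begin{align*}
f(2k+3)&=(2k+3)^2+(2k-2n-1)(2k+3)+4kn+4n-8k^2-8k-3\\
&=-2n+4k+3 .
\end{align*}
Here the $n$-terms are $-4kn-6n+4kn+4n=-2n$. The constant terms are $4k^2+12k+9+4k^2+4k-3-8k^2-8k-3=8k+3$, so in fact $f(2k+3)=-2n+8k+3$. Either way it is negative, since $n\ge 8k^2-2k>4k+2$ for $k\ge2$.

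At the right endpoint,
\begin{align*}
f(n)=-n^2+(2k+3)n-8k^2-8k-3 .
\end{align*}
This is negative as soon as $n>2k+3$, because then $-n^2+(2k+3)n<0$ and the remaining terms are negative. Hence $f(p)\le 0$ for all $p\in[2k+3,n]$, contradicting $f(p)>0$ in \eqref{specST-th2-eq3-0}. Therefore $p\le 2k+2$, and combined with the lower bound, $p=2k+2$.

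The only delicate point is making sure the upper estimate \eqref{spectral-spanning-tree-eq-add1} applies for all admissible $p$, in particular when $t=p$ and the parts are all singletons. In that case $e(G)\le e_G(\pi)=k(n-1)-1$, and the bound $\binom{n-p+1}{2}+k(p-1)-1$ with $p=n$ reduces to exactly this, so the argument is consistent. Beyond that, the proof is a careful expansion of the two endpoint values of $f$.
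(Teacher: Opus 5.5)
Your approach is the paper's: take $p\ge 2k+2$ from Claim~\ref{spectral-spanning-tree-claim1}(i), assume $p\ge 2k+3$, and use convexity of $f$ on $[2k+3,n]$ to reduce to the two endpoint values. Your final value $f(2k+3)=-2n+8k+3$ agrees with the paper, but the first display should read $-2n+8k+3$ rather than $-2n+4k+3$.

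The real problem is $f(n)$. You dropped the $4kn$ term; the correct value is
\[
f(n)=-n^2+(6k+3)n-8k^2-8k-3 .
\]
Your justification, ``negative as soon as $n>2k+3$'', is false for this expression. For instance $f(4k)=4k-3>0$, and $4k>2k+3$ when $k\ge 2$. So that step, as written, does not go through.

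The repair is immediate:
\[
f(n)=-n(n-6k-3)-(8k^2+8k+3)<0 \quad\text{whenever } n\ge 6k+3,
\]
and $n\ge 8k^2-2k\ge 6k+3$ because $8k^2-8k-3>0$ for $k\ge 2$. With this correction the argument is complete and coincides with the paper's. Your check of the all-singleton case $t=p$ in \eqref{spectral-spanning-tree-eq-add1} is correct, though it only confirms a bound the paper had already established.
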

\p
Suppose, to the contrary, that $p \neq 2k+2$. Then by Claim \ref{spectral-spanning-tree-claim1}(i), we have $p \ge 2k+3$. Combining this with
(\ref{specST-th2-eq3-0}),  we obtain that for $p\in [2k+3,n]$, the maximum value  of $f(p)$ is attained at an endpoint. Thus $f(p)\le \max\{f(2k+3),\,f(n)\}$, where
\begin{align*}
f(2k+3) = 8k-2n+3 \text{ and }
f(n) = -n^2 + 6kn + 3n - 8k^2 - 8k - 3.
\end{align*}
By  $n\ge 8k^2-2k$, we have
$f(p)\le \max\{f(2k+3),\,f(n)\}<0$, contrary to (\ref{specST-th2-eq3-0}). So Claim \ref{specST-th2-claim2} holds.\q


\begin{Claim}\label{specST-th2-claim3}
 $t=p-1=2k+1$.
\end{Claim}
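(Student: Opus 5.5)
We argue as in the proof of Claim \ref{spectral-spanning-tree-claim3}: suppose $t\neq p-1$, where $p=2k+2$ by Claim \ref{specST-th2-claim2}. The case $t=p=n$ is impossible because $n\ge 8k^2-2k>2k+2$. So it suffices to exclude $0\le t\le p-3$ and $t=p-2$. In both cases we compare the edge bound \eqref{spectral-spanning-tree-eq-add1}, refined using Claim \ref{spectral-spanning-tree-claim1}(ii), against the lower bound \eqref{specST-th2-eq0} coming from $\lambda(G)>n-2k-2$ and Lemma \ref{le3-1}.

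\textbf{Case $0\le t\le p-3$.} There are at least two nonsingleton parts, each of size at least $k+2$. Corollary \ref{21c} (via Lemma \ref{le0-1}) then gives
\[
e(G)\le \binom{n-2k-p-1}{2}+2\binom{k+2}{2}+k(p-1)-1 .
\]
We substitute $p=2k+2$, so the largest part has at most $n-4k-3$ vertices. The right-hand side becomes $\binom{n-4k-3}{2}+(k+2)(k+1)+k(2k+1)-1$, which is roughly $\tfrac12 n^2-(4k+\tfrac72)n+O(k^2)$. The lower bound \eqref{specST-th2-eq0} is roughly $\tfrac12 n^2-(2k+\tfrac32)n+4k^2+O(k)$. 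The difference is about $-(2k+2)n+O(k^2)$, which is negative once $n\ge 8k^2-2k$. This contradicts \eqref{specST-th2-eq0}, and the check is a single quadratic in $n$.

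\textbf{Case $t=p-2=2k$.} There is exactly one nonsingleton part $V_{p-1}$ besides $V_p$. Lemma \ref{le0-1} gives
\[
e(G)\le \binom{n-t-(k+2)}{2}+\binom{k+2}{2}+k(p-1)-1=\binom{n-3k-2}{2}+\binom{k+2}{2}+k(2k+1)-1 .
\]
This is about $\tfrac12 n^2-(3k+\tfrac52)n+O(k^2)$. Compared with \eqref{specST-th2-eq0}, the deficit is about $(k+1)n-O(k^2)$, again positive for $n\ge 8k^2-2k$. So this case yields a contradiction directly, without the detour through $\lambda(G)<n-k-p$ used in Theorem \ref{spectral-spanning-tree-thm1}. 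For safety one can also use the complementary counting: $s=2k-1$-edge-connectivity gives
\[
k(p-1)-1=e_G(\pi)\ge 2k(2k-1)-e(G[\cup_{i\le 2k}V_i])\ge 2k(2k-1)-\binom{2k}{2}.
\]
This gives $2k^2+k-1\ge 2k^2-k$, which holds and so produces no contradiction; hence the spectral/edge-count estimate is the real tool here.

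The main obstacle is the careful constant bookkeeping. We must verify that after substituting $p=2k+2$ the quadratic inequality in $n$ fails throughout the whole range $n\ge 8k^2-2k$, including small $k$ such as $k=2$, $n\ge 28$. We will expand both sides exactly, as in Claim \ref{specST-th2-claim2}, and bound the resulting linear-in-$n$ expression using $n\ge 8k^2-2k$. Once both cases are excluded, $t=p-1=2k+1$.
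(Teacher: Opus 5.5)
Your proposal is correct and follows the paper's argument: you bound $e(G)$ from above using Claim \ref{spectral-spanning-tree-claim1}(ii) with Lemma \ref{le0-1}/Corollary \ref{21c}, and compare this with the spectral lower bound \eqref{specST-th2-eq0}; for $t=p-2$ this reduces exactly to the paper's inequality $(k+1)n-3k^2-7k-\frac{5}{2}<0$, which is impossible for $n\ge 8k^2-2k$. The only difference is that the paper does not split off $0\le t\le p-3$. The bound $\binom{n-k-p}{2}+\binom{k+2}{2}+k(p-1)-1$ from your case $t=p-2$ already dominates the Corollary \ref{21c} bound for every $0\le t\le p-2$, so one computation suffices; your separate computation, which gives $-(2k+2)n+7k^2+15k+\frac{13}{2}<0$, is also correct but redundant.
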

\p
On the contrary, suppose that $t\neq p-1$. Then $t=p=n$ or $0\le t\le p-2=2k$.
As $n\ge 8k^2-2k$ and $p=2k+2$ in Claim \ref{specST-th2-claim2}, we have $t=p=n$ is impossible. Thus we have $0\le t\le p-2=2k$.
By Claim \ref{spectral-spanning-tree-claim1}(ii),
Claim \ref{specST-th2-claim2}   and Corollary \ref{21c}, we have
\begin{align*}
e(G)\le& \binom{n-(p-t-1)(k+2)-t}{2}+(p-t-1)\binom{k+2}{2}+k(p-1)-1,
\\
\le& \binom{n-k-p}{2}+\binom{k+2}{2}+k(p-1)-1
\\
=& \frac{n^2}{2} - 3nk + 7k^2 - \frac{5}{2}n + 10k + 3.
\end{align*}
Combining this with (\ref{specST-th2-eq0}), we have
\begin{align}\label{specST-th2-eq4}
h(n) < 0, \text{ where } h(n)=nk + n - 3k^2 - 7k - \frac{5}{2}.
\end{align}
Since $n\ge 8k^2-2k$ and $k\ge 2$, it follows that  $h(n)\ge 0$,
contrary to (\ref{specST-th2-eq4}).
So Claim \ref{specST-th2-claim3} holds.
\q

Let $U_3=\cup_{i=1}^{p-1}V_i$ and $W=\{w_1,w_2,\ldots,w_{t_0}\}$ be a subset of $U_3$  satisfies that $d_{\overline{G}[U_3]}(w_i)\ge 1$ for any $i\in \{1,2,\ldots,t_0\}$ and $d_{V_p}(w_1)\ge d_{V_p}(w_2)\ge \cdots \ge d_{V_p}(w_{t_0})$. Recall that ${\bf x}$ is the corresponding Perron vector of $\lambda(G)$. If there exists $i\in \{1,2,\ldots,t_0\}$ such that $d_{V_p}(w_i)= d_{V_p}(w_{i+1})$, then we can further  define   $W$  satisfies that $x_{w_{i}}\ge x_{w_{i+1}}$.

\begin{Claim}\label{specST-th2-claim5}
Each of the following holds:
\\
(i) $G[V_p]$ is a complete graph with  $|V_p|=n-p+1=n-2k-1$;
\\
(ii) $e(G[U_3])=2k^2-k$, $e(\overline{G}[U_3])=2k$, $e(V_p, U_3)=2k-1$  and $d_G(v)=2k-1$ for any $v\in U_3$;
\\
(iii)  $d_{\overline{G}[U_3]}(v)=d_{V_p}(v)+1$ for any $v\in U_3$;
\\
(iv) $U_3=W$ and $t_0=|U_3|=2k+1=e(\overline{G}[U_3])+1$;
\\
(v) $2\le \triangle({\overline{G}[U_3]})\le k$.
\end{Claim}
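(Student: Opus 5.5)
We are in the setting $s=2k-1$, with $p=2k+2$ and $t=p-1$, so $U_3$ consists of $2k+1$ singleton parts and $V_p$ has $n-2k-1$ vertices. The plan is to settle (i)--(v) in that order. Each item is either pure counting from \eqref{spectral-spanning-tree-eq1} together with the degree condition $\delta(G)\ge 2k-1$, or uses the maximality of $\lambda(G)$ through edge additions.

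For (i), we argue as in Claim \ref{spectral-spanning-tree-claim5}(i). Adding an edge inside a part of $\pi$ leaves $e_G(\pi)$ unchanged, so $\tau\le k-1$ is kept, and edge-connectivity cannot drop. Strict monotonicity of $\lambda$ then forces every part, in particular $G[V_p]$, to be complete, and $|V_p|=n-p+1=n-2k-1$.

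For (ii), \eqref{spectral-spanning-tree-eq1} gives $e_G(\pi)=k(2k+1)-1=2k^2+k-1$, and this equals $e(G[U_3])+e(V_p,U_3)$. Summing degrees over $U_3$ and using $\delta\ge 2k-1$ gives
\[
(2k+1)(2k-1)\le \sum_{v\in U_3}d_G(v)=2e(G[U_3])+e(V_p,U_3)=e_G(\pi)+e(G[U_3]).
\]
Hence $e(G[U_3])\ge 4k^2-1-(2k^2+k-1)=2k^2-k$. Since $\binom{2k+1}{2}=2k^2+k$, this means $e(\overline{G}[U_3])\le 2k$, and therefore $e(V_p,U_3)=e_G(\pi)-e(G[U_3])\le 2k-1$.

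For the reverse inequality I would first show $e(V_p,U_3)\ge 2k-1$. The cut $(V_p,U_3)$ is an edge cut, so $e(V_p,U_3)\ge\kappa'(G)\ge 2k-1$. Combined with the upper bound this gives $e(V_p,U_3)=2k-1$ and $e(G[U_3])=2k^2-k$. Then equality holds throughout the degree sum, so $d_G(v)=2k-1$ for every $v\in U_3$.

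Item (iii) is immediate from (ii). For $v\in U_3$ we have $2k-1=d_G(v)=d_{U_3}(v)+d_{V_p}(v)=2k-d_{\overline G[U_3]}(v)+d_{V_p}(v)$.

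For (iv), (iii) gives $d_{\overline G[U_3]}(v)\ge 1$ for every $v\in U_3$. So every vertex of $U_3$ lies in $W$, giving $U_3=W$ and $t_0=2k+1=e(\overline G[U_3])+1$.

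For (v), the lower bound comes from $\overline G[U_3]$ having $2k+1$ vertices, all of positive degree, and $2k$ edges. If every degree were $1$ we would get a perfect matching on an odd number of vertices, which is impossible, so $\Delta\ge 2$.

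The upper bound $\Delta(\overline G[U_3])\le k$ is the main obstacle, and it is where spectral extremality is really needed. Suppose $w\in U_3$ has $d_{\overline G[U_3]}(w)\ge k+1$. By (iii), $w$ then has at least $k$ neighbours in $V_p$.

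I would first try to derive a contradiction with edge-connectivity or with the choice of $\pi$ (maximum number of singletons). Intuitively, a high-degree vertex of $\overline G$ forces the cut around $U_3\setminus\{w\}$ to be too small. Mirroring the last step of Lemma \ref{EDST-thm2-le1}, the cut between $U_3\setminus\{w\}$ and $V_p\cup\{w\}$ has size $2k-1+d_{U_3}(w)-d_{V_p}(w)=2k-1+2d_{U_3}(w)-(2k-1)=2d_{U_3}(w)$. Since $d_{U_3}(w)=2k-d_{\overline G[U_3]}(w)\le k-1$, this cut has size at most $2k-2<2k-1$, contradicting $\kappa'(G)\ge 2k-1$.

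So (v) should follow from counting as well. If this cut argument fails in some degenerate case, the fallback is a Kelmans/Lemma \ref{le2.2} edge shift from $w$ to a vertex of $U_3$ of smaller $\overline G$-degree.
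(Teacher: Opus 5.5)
Your proposal is correct and follows the paper's proof essentially step for step. Completeness of the parts comes from maximality of $\lambda(G)$, (ii) comes from the degree-sum bound squeezed against the edge cut $(V_p,U_3)$, and (iii)--(iv) follow from direct degree counting. For the upper bound in (v) you use exactly the paper's cut $e(V_p\cup\{w\},U_3\setminus\{w\})=2d_{U_3}(w)\le 2k-2$. The spectral fallback you mention is unnecessary: that cut argument has no degenerate cases, since $|U_3|=2k+1\ge 5$ and $V_p\neq\emptyset$.
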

\p
As $G$ is a graph with the maximum spectral radius  among all $(2k-1)$-edge-connected graphs satisfying $\tau(G)\le k-1$, by the choice of $\pi$, we have $G[V_i]$ is a complete graph for any $i\in \{1,2,\ldots,p\}$. Combining this with Claims \ref{specST-th2-claim2} and \ref{specST-th2-claim3}, we have $G[V_p]$ is a complete graph with  $|V_p|=n-p+1=n-2k-1$. This proves Claim \ref{specST-th2-claim5}(i).

By Claim \ref{specST-th2-claim3}, we have $|U_3|=p-1=2k+1$.
Since
\begin{align*}
\frac{\sum_{v\in U_3}d_G(v)-e(V_p, U_3)}{2}=
e(G[U_3])=e_G(\pi)-e(V_p, U_3),
\end{align*}
it follows that
\begin{align*}
\sum_{v\in U_3}d_G(v) = 2e_G(\pi)-e(V_p, U_3).
\end{align*}
This, together with $G$ being  $(2k-1)$-edge-connected, (\ref{spectral-spanning-tree-eq1}),  Claims \ref{specST-th2-claim2} and \ref{specST-th2-claim3}, yields that
\begin{align*}
(2k+1)(2k-1)\le \sum_{v\in U_3}d_G(v) = 2e_G(\pi)-e(V_p, U_3)\le 2\big[k(2k+1)-1\big]-(2k-1).
\end{align*}
By the above expression,   we have $\sum_{v\in U_3}d_G(v)=4k^2-1$, $e_G(\pi)=2k^2+k-1$ and $e(V_p, U_3)=2k-1$. Thus $d_G(v)=2k-1$ for any $v\in U_3$ and $2e(G[U_3])=\sum_{v\in U_3}d_G(v)-e(V_p, U_3)=2k(2k-1).$ Furthermore, we have
 $$
 e(\overline{G}[U_3])=
 \binom{|U_3|}{2}-e(G[U_3])
 =\binom{2k+1}{2}-(2k^2-k)=2k.
 $$
 This proves Claim \ref{specST-th2-claim5}(ii).

As $d_G(v)=2k-1$ for any $v\in U_3$, we have
 $d_{U_3}(v)+d_{V_p}(v)=2k-1$.
 Since  $|U_3|=2k+1$, it follows that
 $d_{U_3}(v)+d_{\overline{G}[U_3]}(v)=2k$.
 Thus we have
 \[d_{\overline{G}[U_3]}(v)=
 2k-d_{U_3}(v)=d_{V_p}(v)+1.
 \]
This proves Claim \ref{specST-th2-claim5}(iii).

By Claim \ref{specST-th2-claim5}(ii) and
the definition of $W$, we have  $U_3=W$.
As $|U_3|=|\cup_{i=1}^{p-1}V_i|=p-1$, by Claims \ref{specST-th2-claim3} and   \ref{specST-th2-claim5}(ii), we have $t_0=|W|=|U_3|=2k+1=e(\overline{G}[U_3])+1$.
This proves Claim \ref{specST-th2-claim5} (iv).

As $e(\overline{G}[U_3])=2k$ in Claim \ref{specST-th2-claim5}(ii) and $|U_3|=e(\overline{G}[U_3])+1$ in Claim \ref{specST-th2-claim5}(iv), we get $\triangle({\overline{G}[U_3]})\ge 2$. We next prove $\triangle({\overline{G}[U_3]})\le k$ by contradiction.
Suppose that $\triangle({\overline{G}[U_3]})\ge k+1$.  Then there exists a vertex $v\in U_3$ such that $d_{\overline{G}[U_3]}(v)\ge k+1$.
By Claim \ref{specST-th2-claim5}(ii)-(iii), we have \begin{align}\label{specST-th2-eq19}
  & e(V_p\cup \{v\},U_3\setminus\{v\})
\nonumber\\
= & e(V_p,U_3)-e(V_p, \{v\})+e(\{v\},U_3\setminus \{v\})
\nonumber\\
= & 2k-1-d_{V_p}(v)+d_{U_3}(v)
\nonumber\\
= & 2k-1-d_{V_p}(v)+(d_{G}(v)-d_{V_p}(v))
\nonumber\\
= & 4k-2-2(d_{\overline{G}[U_3]}(v)-1)
\le  2k-2,
\end{align}
contrary to $\kappa'(G)=2k-1$.  This proves Claim \ref{specST-th2-claim5}(v), and so Claim \ref{specST-th2-claim5} holds.
\q


As $A(G)\mathbf{x}=\lambda(G)\mathbf{x}$,  by Claim \ref{specST-th2-claim5}(i) and (ii),  for any $u\in V_p$ and $v\in U_3$, we have
\begin{align}
\lambda(G)x_{u}&=\sum_{w\in V_p\setminus \{u\}}x_w + \sum_{w\in N_G(u)\cap U_3 }x_w,
\label{specST-th2-eq13}
\\
\lambda(G)x_{v}&=\sum_{w\in N_G(v)\cap U_3 }x_w+\sum_{w\in N_G(v)\cap V_p} x_w. \label{specST-th2-eq15}
\end{align}

For short, we write $\lambda$ for $\lambda(G)$.
Let $x_{u_0}=\min\{x_{w}: w\in V_p\}$ and $x_{v_0}=\max\{x_{w}: w\in U_3\}$. By (\ref{specST-th2-eq13}), (\ref{specST-th2-eq15})  and Claim  \ref{specST-th2-claim5}(i)--(ii),  we have
\begin{align*}
\lambda x_{u_0}
&= \lambda x_{v_0}-\sum_{w\in N_G(v_0)\cap U_3 }x_w-\sum_{w\in N_G(v_0)\cap V_p}x_w
 +\sum_{w\in V_p\setminus \{u_0\}}x_w+\sum_{w\in N_G(u_0)\cap U_3 }x_w
\\
&\ge \lambda x_{v_0}-\sum_{w\in N_G(v_0)\cap U_3 }x_w-\sum_{w\in N_G(v_0)\cap V_p}x_w
 +\sum_{w\in V_p\setminus \{u_0\}}x_w
\nonumber \\
&\ge \lambda x_{v_0}-(2k-1)x_{v_0}
 +\sum_{w\in V_p\setminus N_G(v_0)}x_{w} -x_{u_0}
\nonumber \\
 &\ge (\lambda -2k+1)x_{v_0}+(n-2k-1-2k+1)x_{u_0}-x_{u_0},
\end{align*}
which implies that
\begin{align}\label{specST-th2-eq16}
(\lambda -n+4k+1)x_{u_0}\ge (\lambda -2k+1)x_{v_0}.
\end{align}
As $n\ge 8k^2-2k$, $n-1>\lambda > n-2k-2$ and $k\ge 2$, we have $\lambda -n+4k+1>0$.
Combining this with (\ref{specST-th2-eq16}), we have
\begin{align}\label{specST-th2-eq17}
x_{u_0}\ge \frac{\lambda -2k+1}{\lambda -n+4k+1} x_{v_0}> \frac{n-2k}{4k} x_{v_0} > x_{v_0},
\end{align}
where the second inequality holds as $(\lambda -2k+1)/(\lambda -n+4k+1)$ is decreasing on $\lambda$.

Let $U_1=\{u_1,u_2,\ldots,u_{t_1}\}$ satisfies that $d_{U_3}(u_i)\ge 1$ for any $i\in \{1,2,\ldots,t_1\}$ and $d_G(u_1)\ge d_G(u_2)\ge \cdots \ge d_G(u_{t_1})$, and  $U_2=V_p\setminus U_1$. Since $e(V_p, U_3)=2k-1$ by Claim \ref{specST-th2-claim5}(ii) and $n\ge 8k^2-2k$, we have $u_0\in U_2$ by \eqref{specST-th2-eq13}.

Let $W_1=\{w_1,w_2,\ldots,w_{t_2}\}\subseteq W=U_3$  satisfies that $d_{\overline{G}[U_3]}(w_i)\ge 2$ for any $i\in \{1,2,\ldots,t_2\}$.
As $W_1\subseteq W$,  we have $d_{V_p}(w_1)\ge d_{V_p}(w_2)\ge \cdots \ge d_{V_p}(w_{t_2})$. By Claim \ref{specST-th2-claim5}(iii), we have
$d_{V_p}(w_i)\ge 1$ for any $i\in \{1,2,\ldots,t_2\}$.
Let $W_2=U_3\setminus W_1$. By the definition of $W_1$ and Claim \ref{specST-th2-claim5}(ii), we get
for any $v\in U_3\setminus W_1$, $d_{U_3}(v)=d_G(v)=2k-1$.
This, together with $|U_3|=2k+1$  in Claim \ref{specST-th2-claim5}(iv), yields that every vertex of
$W_2$ is a leaf vertex in $\overline{G}[U_3]$, that is,
\begin{equation}\label{specST-th2-w2eq1}
\text{ for any $v\in W_2$, we have  $d_{\overline{G}[U_3]}(v)=1$. }
\end{equation}



\begin{Claim}\label{specST-th2-claim6}
Each of the following holds:
\\
(i) $N_{U_3}(u_1)\supseteq N_{U_3}(u_2)\supseteq \cdots \supseteq N_{U_3}(u_{t_1})$;
\\
(ii) $N_{V_p}(w_1)\supseteq N_{V_p}(w_2)\supseteq \cdots \supseteq N_{V_p}(w_{t_2})$, where $t_1=d_{V_p}(w_1)$ and $t_2=d_{U_3}(u_1)$;
\\
(iii)  if there exist $i,j\in \{1,2,\ldots,t\}$ with $i<j$ and $N_{V_p}(w_i)\supsetneqq N_{V_p}(w_{j})$, then $x_{w_{i}}>x_{w_{j}}$;
\\
(iv) $1\le t_1=d_{V_p}(w_1)=\triangle({\overline{G}[U_3]})-1\le k-1$.
\end{Claim}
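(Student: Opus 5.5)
Parts (i) and (ii) can be proved exactly as Claim~\ref{spectral-spanning-tree-claim8} in the proof of Theorem~\ref{spectral-spanning-tree-thm1}, using the Kelmans transformation. Then (iii) follows from the eigen-equations in the style of Claim~\ref{spectral-spanning-tree-claim8-1}, and (iv) comes from combining (ii) with Claim~\ref{specST-th2-claim5}.

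For (i), suppose $i<j$ and $N_{U_3}(u_i)\nsupseteq N_{U_3}(u_j)$. Consider $G(u_i,u_j)$. It still contains $K_{n-2k-1}$ on $V_p$, keeps all degrees in $U_3$, keeps $e(V_p,U_3)=2k-1$, and keeps $\overline{G}[U_3]$ unchanged. Since $\Delta(\overline{G}[U_3])\le k$ by Claim~\ref{specST-th2-claim5}(v), we get $G(u_i,u_j)\in\mathcal{H}_2(k)$, so it is $(2k-1)$-edge-connected by Lemma~\ref{EDST-thm2-le1}. The partition $\pi$ still satisfies \eqref{spectral-spanning-tree-eq1}, so $\tau(G(u_i,u_j))\le k-1$. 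By Lemma~\ref{le2.1}(ii), $\lambda(G(u_i,u_j))>\lambda(G)$ whenever $G(u_i,u_j)\neq G$, which contradicts the maximality of $G$. The one point to check is that the transformation actually changes $G$: it does, because some vertex of $N_{U_3}(u_j)\setminus N_{U_3}(u_i)$ is moved.

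For (ii), I would copy the order argument of Claim~\ref{spectral-spanning-tree-claim8}(ii). If $N_{V_p}(w_j)\nsubseteq N_{V_p}(w_i)$ with $i<j$, then $d_{V_p}(w_i)\ge d_{V_p}(w_j)$ gives vertices $u_{j'}\in N_{V_p}(w_j)\setminus N_{V_p}(w_i)$ and $u_{i'}\in N_{V_p}(w_i)\setminus N_{V_p}(w_j)$, both in $U_1$. Comparing $i'$ and $j'$ against the nested chain from (i) gives a contradiction either way. With the two nested chains, $N_{V_p}(w_1)=U_1$ and $N_{U_3}(u_1)=W_1$, so $t_1=d_{V_p}(w_1)$ and $t_2=d_{U_3}(u_1)$. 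Here one must note that every vertex with a $V_p$-neighbour lies in $W_1$. This holds because $d_{V_p}(v)\ge1$ means $d_{\overline{G}[U_3]}(v)\ge2$ by Claim~\ref{specST-th2-claim5}(iii).

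For (iii), subtract the two instances of \eqref{specST-th2-eq15} for $w_i$ and $w_j$. Write each $U_3$-neighbour sum as $\sum_{U_3}x-\sum_{N_{\overline{G}[U_3]}[\cdot]}x$. The $V_p$-part contributes at least $|N_{V_p}(w_i)\setminus N_{V_p}(w_j)|\,x_{u_0}\ge x_{u_0}$. The complement part is bounded below by $-\Delta(\overline{G}[U_3])\,x_{v_0}-x_{w_i}+x_{w_j}$, using Claim~\ref{specST-th2-claim5}(v) to get $\Delta(\overline{G}[U_3])\le k$. Hence $(\lambda+1)(x_{w_i}-x_{w_j})\ge x_{u_0}-kx_{v_0}$. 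By \eqref{specST-th2-eq17}, $x_{u_0}>\frac{n-2k}{4k}x_{v_0}>kx_{v_0}$, since $n\ge 8k^2-2k$, so the right side is positive. This step is routine.

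For (iv), apply Claim~\ref{specST-th2-claim5}(iii): $t_1=d_{V_p}(w_1)=d_{\overline{G}[U_3]}(w_1)-1$. It remains to show that $w_1$ attains $\Delta(\overline{G}[U_3])$, and this is the main obstacle. The ordering of $W$ is by $d_{V_p}$, which equals $d_{\overline{G}[U_3]}-1$, so $w_1$ maximizes $d_{\overline{G}[U_3]}$. This also needs the convention that the maximum-degree vertex lies in $W_1$; that holds because $\Delta(\overline{G}[U_3])\ge2$ and $W=U_3$ by Claim~\ref{specST-th2-claim5}(iv). The bounds then follow from $2\le\Delta(\overline{G}[U_3])\le k$ in Claim~\ref{specST-th2-claim5}(v).
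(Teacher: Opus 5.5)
Your proofs of (ii)--(iv) follow the paper. In (iii), your bound $(\lambda+1)(x_{w_i}-x_{w_j})\ge x_{u_0}-kx_{v_0}$, obtained from $\Delta(\overline{G}[U_3])\le k$, is slightly sharper than the paper's $x_{u_0}-(2k-1)x_{v_0}$, and either suffices. There is, however, a genuine gap in (i).

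A labelled change $G(u_i,u_j)\neq G$ is not enough for a strict increase of $\lambda$. Suppose $N_{U_3}(u_i)\subsetneq N_{U_3}(u_j)$. Then the Kelmans transformation merely exchanges the roles of $u_i$ and $u_j$, so $G(u_i,u_j)\cong G$ and $\lambda(G(u_i,u_j))=\lambda(G)$. Lemma~\ref{le2.1}(ii) must be read with $G'\not\cong G$, which is how the paper uses it. A symptom of the gap is that your argument never uses $i<j$ or the ordering $d_G(u_1)\ge\cdots\ge d_G(u_{t_1})$. It would therefore apply equally to the reversed ordering and force all the sets $N_{U_3}(u_i)$ to coincide. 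That is false in $H_2(k)$ for $k\ge 3$: there $u_1$ is adjacent to the common leaf and $u_2$ is not.

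The missing step is exactly where the paper invokes the ordering. Since $G[V_p]$ is complete, $d_G(u_i)\ge d_G(u_j)$ gives $d_{U_3}(u_i)\ge d_{U_3}(u_j)$. Hence $N_{U_3}(u_j)\setminus N_{U_3}(u_i)\neq\emptyset$ forces $N_{U_3}(u_i)\setminus N_{U_3}(u_j)\neq\emptyset$ as well. With both differences nonempty the two neighbourhoods are incomparable, so $G(u_i,u_j)\not\cong G$ and the strict inequality applies. Combined with your correct check that $G(u_i,u_j)\in\mathcal{H}_2(k)$ and still satisfies $\tau\le k-1$, this gives the contradiction. Part (ii) uses only the fact that the sets $N_{U_3}(u)$ form a chain, so it goes through once (i) is repaired this way.
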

\p
On the contrary, suppose that there exist $i,j\in \{1, 2, \ldots, t_1\}$ with $i<j$  such that $N_{U_3}(u_i)\nsupseteq N_{U_3}(u_j)$. As $G[V_p]$ is a complete graph and $d_G(u_i)\ge d_G(u_j)$, it follows that  $N_{U_3}(u_j)\setminus  N_{U_3}(u_i)\neq \emptyset$.
Then $\tau(G(u_i,u_j))\le k-1$ and $G(u_i,u_j)$ is $s$-edge-connected by Claim \ref{specST-th2-claim5}(v) and  Lemma \ref{EDST-thm2-le1}.
 Moreover, by Lemma \ref{le2.1} and $G(u_i,u_j)\ncong G$ from  $N_{U_3}(u_i)\setminus  N_{U_3}(u_j)\neq \emptyset$, we have $\lambda(G(u_i,u_j))>\lambda(G)$, contrary to the choice of $G$.
So Claim \ref{specST-th2-claim6}(i) holds.

We prove $N_{V_p}(w_1)\supseteq N_{V_p}(w_2)\supseteq \cdots \supseteq N_{V_p}(w_{t_2})$ by contradiction. Suppose that there exist $i, j \in \{1,2,\dots,t_2\}$ with $i<j$, such that $N_{V_p}(w_j) \nsubseteq N_{V_p}(w_{i})$. Then there exists $u_{j'}\in N_{V_p}(w_j) \setminus  N_{V_p}(w_{i})$. Since $d_{V_p}(w_i)\ge  d_{V_p}(w_j)$,   there exists $u_{i'}\in N_{V_p}(w_i) \setminus  N_{V_p}(w_{j})$.
If $j'<i'$, then by Claim \ref{specST-th2-claim6}(i), we have
$w_i\in N_{U_3}(u_{i'})\subseteq N_{U_3}(u_{j'})$, contrary to  $u_{j'}\in N_{V_p}(w_j) \setminus  N_{V_p}(w_i)$. Thus we must have $j'>i'$, again by Claim \ref{specST-th2-claim6}(i), we have
$w_j\in N_{U_3}(u_{j'})\subseteq N_{U_3}(u_{i'})$, contrary to  $u_{i'}\in N_{V_p}(w_i) \setminus  N_{V_p}(w_{j})$.  Therefore, $N_{V_p}(w_1)\supseteq N_{V_p}(w_2)\supseteq \cdots \supseteq N_{V_p}(w_{t_2})$.
 Combining this with Claim \ref{specST-th2-claim6}(i), we have $t_1=d_{V_p}(w_1)$ and $t_2=d_{U_3}(u_1)$. So Claim \ref{specST-th2-claim6}(ii) holds.

Next, we show (iii).
 As $N_{V_p}(w_i)\setminus N_{V_p}(w_j)\neq \emptyset$,
by (\ref{specST-th2-eq15}), (\ref{specST-th2-eq17}), $n\ge 8k^2-2k$ and Claim \ref{specST-th2-claim5}(ii), we have
 \begin{align*}
 \lambda  x_{w_i}- \lambda x_{w_{j}} &= \sum_{w\in N_G(w_i)\cap U_3 }x_w+\sum_{w\in N_G(w_i)\cap V_p} x_w -\sum_{w\in N_G(w_{j})\cap U_3 }x_w - \sum_{w\in N_G(w_{j})\cap V_p} x_w
 \\
 &\ge  \sum_{w\in N_G(w_i)\cap V_p} x_w - \sum_{w\in N_G(w_{j})\cap V_p} x_w-\sum_{w\in N_G(w_{j})\cap U_3 }x_w
 \\
 &= \sum_{w\in N_{V_p}(w_i)\setminus N_{V_p}(w_{j}) } x_w - \sum_{w\in N_G(w_{j})\cap U_3 }x_w
 \nonumber \\
 &\ge  x_{u_0}-(2k-1)x_{v_0}
 \nonumber \\
 &> \frac{n-2k}{4k} x_{v_0}-(2k-1)x_{v_0}
 = \frac{n-8k^2+2k}{4k} x_{v_0} \ge 0.
\end{align*}
Thus we have  $x_{w_i}>x_{w_{j}}$. This proves Claim \ref{specST-th2-claim6}(iii).

By \eqref{specST-th2-w2eq1}, Claims \ref{specST-th2-claim5}(iii)  and   \ref{specST-th2-claim6}(ii), we have $t_1=d_{V_p}(w_1)=\triangle({\overline{G}[U_3]})-1$. Thus by  Claim \ref{specST-th2-claim5}(v), we get
 $1\le t_1=d_{V_p}(w_1)=\triangle({\overline{G}[U_3]})-1\le k-1$.
This proves Claim \ref{specST-th2-claim6}(iv).
\q

By Claim \ref{specST-th2-claim5}(i)  and (\ref{specST-th2-eq13}), we have for any $u\in V_p\setminus U_1=U_2$,
\begin{align*}
\lambda x_{u} - \lambda x_{u_1}= x_{u_1} - x_{u} - \sum_{v\in N_G(u_1)\cap U_3}x_v.
\end{align*}
Combining this with $t_2=d_{U_3}(u_1)$ in  Claim \ref{specST-th2-claim6}(ii),
  we have
\begin{align}\label{specST-th2-eq66}
0> x_{u} - x_{u_1} \ge  -\frac{t_2 x_{v_0} }{\lambda + 1}.
\end{align}


\begin{Claim}\label{specST-th2-claim6-7}
Let $C_1, C_2, \ldots, C_r$ be the components of $\overline{G}[U_3]$. If $r\ge 2$, each of the following holds:
\\
(i) there exists a component  with at least one cycle;
\\
(ii)  $\Delta( C_i)\ge 2$ for any $i\in \{1,2,\ldots,r\}$;
\\
(iii) for any $w\in U_3\setminus \{w_1,w_2\}$,  $d_{\overline{G}[U_3]}(w)\le k-1$ when $k\ge 3$.
\end{Claim}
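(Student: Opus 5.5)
The plan is to combine counting in $\overline{G}[U_3]$ with one local edge switching that increases $\lambda$. Recall from Claim \ref{specST-th2-claim5} that $\overline{G}[U_3]$ has $2k+1$ vertices and exactly $2k$ edges. By Claim \ref{specST-th2-claim5}(iii), every vertex has $d_{\overline{G}[U_3]}(v)=d_{V_p}(v)+1\ge 1$, so there are no isolated vertices. By Claim \ref{specST-th2-claim5}(v), $\Delta(\overline{G}[U_3])\le k$.

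For (i), I count edges. If every component $C_i$ were a tree, then $e(\overline{G}[U_3])=(2k+1)-r\le 2k-1<2k$, which is a contradiction. Hence some component contains a cycle.

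For (ii), every component has at least two vertices, so $\Delta(C_i)=1$ forces $C_i\cong K_2$, say on $\{a,b\}$. Then $d_{V_p}(a)=d_{V_p}(b)=0$ and $N_G(a)=N_G(b)=U_3\setminus\{a,b\}$, so $x_a=x_b$ by the eigen-equation (\ref{specST-th2-eq15}). By (i), some other component $C_j$ has a cycle; pick an edge $cd$ of that cycle, so $cd\notin E(G)$ and $ac,bd\in E(G)$.

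I will switch to either $G'=G+ab+cd-ac-bd$ or $G''=G+ab+cd-ad-bc$. In both, the degrees in $U_3$ and $e(V_p,U_3)$ are unchanged, and $\overline{G'}[U_3]$ still has $2k$ edges. In the complement, $a$ and $b$ now have degree $2$ and the degrees of $c,d$ are unchanged, so $\Delta\le k$ is preserved. Hence $G',G''\in\mathcal H_2(k)$, which are $(2k-1)$-edge-connected by Lemma \ref{EDST-thm2-le1}. Also $\tau\le k-1$, since $e_{G'}(\pi)=e_G(\pi)$.

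Using $x_a=x_b$, the Rayleigh quotient gives
\[
\mathbf{x}^T(A(G')-A(G))\mathbf{x}=2(x_a-x_d)(x_a-x_c),
\]
and the same for $G''$. So whenever $x_a$ is not strictly between $x_c$ and $x_d$, one of $G',G''$ satisfies $\lambda\ge\lambda(G)$. Strictness then follows in the standard way: equality would make $\mathbf x$ an eigenvector of the new graph, and comparing the eigen-equations at $a$ gives a contradiction.

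The main obstacle is the case where $x_a$ lies strictly between $x_c$ and $x_d$ for every edge $cd$ of the cycle. If the cycle has two consecutive edges whose endpoint values are not separated by $x_a$, I switch with that edge instead. Otherwise the cycle alternates above and below $x_a$, so it is even. In that case I would use two of its edges at once and perform a double switch: merge $\{a,b\}$ into the cycle as a path of length $4$, which has the same edge count. The quadratic form then becomes a sum of two products whose signs can be arranged to be nonnegative.

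For (iii), let $k\ge3$ and suppose some $w\notin\{w_1,w_2\}$ has $d_{\overline{G}[U_3]}(w)=k$. Then $d_{V_p}(w)=k-1$, which is the maximum allowed by Claim \ref{specST-th2-claim6}(iv). Because the $w_i$ are ordered by $d_{V_p}$, it follows that $w_1$ and $w_2$ also have complement degree $k$. Three vertices of degree $k$ span at least $3k-3$ edges, and $3k-3>2k$ when $k\ge4$. When $k=3$, equality forces $w_1,w_2,w$ to be pairwise nonadjacent in $G$ with all their other complement edges going to distinct vertices. This exhausts all $7$ vertices and all $6$ edges and yields a connected complement, so $r=1$, contradicting $r\ge2$.
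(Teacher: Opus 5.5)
Part (i) is the paper's counting argument, and the switch you use in (ii) is exactly the paper's $H_1$. The gap is in how you control the sign of $\mathbf{x}^T(A(G')-A(G))\mathbf{x}=2(x_a-x_c)(x_a-x_d)$. You call the case $x_c<x_a<x_d$ the main obstacle, and for it you offer only an unspecified ``double switch''. You do not say which edges are added and removed. You do not check that the new graph still has $d(v)=2k-1$ on $U_3$, $e(V_p,U_3)=2k-1$ and $\Delta\le k$; inserting $a,b$ into the cycle would raise their complement degrees, which forces changes to the $V_p$--$U_3$ edges. You also never verify the claimed signs. As written, (ii) is not proved.

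The missing idea is that this case cannot occur. Since $c,d$ lie on a cycle of $\overline{G}[U_3]$, each has complement degree at least $2$, so $d_{V_p}(c),d_{V_p}(d)\ge 1$, whereas $N_{V_p}(a)=N_{V_p}(b)=\emptyset$. Claim~\ref{specST-th2-claim6}(iii) then gives $x_c,x_d>x_a=x_b$. You can also see this directly: $\lambda(x_c-x_a)\ge x_{u_0}-(2k-1)x_{v_0}>0$ by \eqref{specST-th2-eq17} and $n\ge 8k^2-2k$. So both factors are negative and the form is strictly positive, with no equality case to discuss. Your sketched strictness argument would in fact fail if $x_a=x_c=x_d$, since then $(A(G')-A(G))\mathbf{x}=0$. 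A smaller slip: after the switch, $a$ and $b$ still have complement degree $1$, not $2$; degree $2$ would give $d_{G'}(a)=2k-2$.

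In (iii), your count for $k\ge4$ is fine, but the conclusion for $k=3$ is wrong. A triangle on $\{w_1,w_2,w\}$ plus one further complement edge at each of these vertices covers at most $6$ of the $7$ vertices of $U_3$. So it neither exhausts $U_3$ nor yields a connected complement. The correct contradiction is that some vertex of $U_3$ would be isolated in $\overline{G}[U_3]$, contrary to $d_{\overline{G}[U_3]}(v)=d_{V_p}(v)+1\ge 1$, which you noted at the start. The paper handles all $k\ge3$ at once with the degree sum $4k=2e(\overline{G}[U_3])\ge 3k+(2k+1-3)$, which forces $k\le2$. That argument does not use $r\ge2$ at all.
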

\p
By contradiction, suppose that all components are acyclic.
Then $|U_3|=|V(C_1)|+\cdots + |V(C_r)| \ge |e(C_1)|+\cdots + |e(C_r)|+r =e(\overline{G}[U_3])+r\ge e(\overline{G}[U_3])+2$. Combining this with Claim \ref{specST-th2-claim5}(ii), we have $|U_3|\ge 2k+2$, contrary to $|U_3|=2k+1$ by Claim \ref{specST-th2-claim3}.  So Claim \ref{specST-th2-claim6-7}(i) holds.

We will show (ii). On the contrary, suppose that there exists $i\in \{1,2,\ldots,r\}$ such that $\Delta( C_i)\le 1$. By Claim \ref{specST-th2-claim5}(iii), we have $\Delta( C_i)= 1$, and so $C_i=K_2$. By Claim \ref{specST-th2-claim6-7}(i), there exists $j\in \{1,2,\ldots,r\}$ with $j\neq i$ such that  $C_j$ has one cycle.
Let $V(C_i)=\{v_{i}',v_i''\}$ and $v_{j}'v_j''$ be an edge of a cycle in $C_j$.
Then $d_{\overline{G}[U_3]}(v_{i}')=d_{\overline{G}[U_3]}(v''_{i})=1$ and
$d_{\overline{G}[U_3]}(v_{j}')=d_{\overline{G}[U_3]}(v''_{j})\ge 2$.
By Claim \ref{specST-th2-claim5}(iii),  we have $d_{V_p}(v_{i}')=d_{V_p}(v''_{i})=0$ and
$d_{V_p}(v_{j}')=d_{V_p}(v''_{j})\ge 1$.
Thus by Claim \ref{specST-th2-claim6}(i) and (iii), we have $\min\{x_{v_{j}'},x_{v_{j}''}\}> \max\{x_{v_{i}'},x_{v_{i}''}\} $.
Let $H_1=G+v_{i}'v_{i}''-v_{i}'v_{j}' +v_{j}'v_{j}''-v_{j}''v_{i}''$.
Then  $\tau(H_1)\le k-1$ and  $H_1$  is $(2k-1)$-edge-connected by Lemma \ref{EDST-thm2-le1}. Moreover,
\begin{align*}
{\mathbf x}^T (A(H_1)-A(G)) {\mathbf x}
=2(x_{v_{i}'}x_{v_{i}''}-x_{v_{i}'}x_{v_{j}'}+
x_{v_{j}'}x_{v_{j}''}-x_{v_{j}''}x_{v_{i}''})
=2(x_{v_{i}'}-x_{v_{j}''})(x_{v_{i}''}-x_{v_{j}'})>0.
\end{align*}
Thus $\lambda(H_1)>\lambda(G)$, a contradiction.
So Claim \ref{specST-th2-claim6-7}(ii) holds.

On the contrary, suppose that there exists $w\in U_3\setminus \{w_1,w_2\}$ such that  $d_{\overline{G}[U_3]}(w)\ge k$.  By Claim \ref{specST-th2-claim6}(iv), we have $d_{\overline{G}[U_3]}(w)= k$. Combining this with Claim \ref{specST-th2-claim5}(iii), we get $d_{\overline{G}[U_3]}(w_1)=d_{\overline{G}[U_3]}(w_2)= k$. Thus $e(\overline{G}[U_3])=\frac{1}{2}\sum_{w\in U_3}  d_{\overline{G}[U_3]}(w)\ge    \frac{1}{2}(d_{\overline{G}[U_3]}(w_1)+d_{\overline{G}[U_3]}(w_2)
+d_{\overline{G}[U_3]}(w)+|U_3|-3)\ge \frac{1}{2}(5k-2)$, contrary to $k\ge 3$ and  $e(\overline{G}[U_3])=2k$ in Claim \ref{specST-th2-claim5}(ii). So Claim \ref{specST-th2-claim6-7}(iii) holds. This proves Claim \ref{specST-th2-claim6-7}.
\q

 \begin{Claim}\label{specST-th2-claim6v}
 If $k=2$, then $G\cong H_2(2)$.
 \end{Claim}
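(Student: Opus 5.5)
The plan is to show that for $k=2$ the structure forced by Claims \ref{specST-th2-claim5}--\ref{specST-th2-claim6-7} leaves only one possibility, and that this possibility is $H_2(2)$. With $k=2$, Claim \ref{specST-th2-claim5} gives the following.
\begin{itemize}
\item $G[V_p]$ is complete of order $n-5$.
\item $|U_3|=5$ and $e(\overline{G}[U_3])=4$.
\item $e(V_p,U_3)=3$.
\item $d_{\overline{G}[U_3]}(v)=d_{V_p}(v)+1\ge 1$ for every $v\in U_3$.
\item $\Delta(\overline{G}[U_3])=2$, since the bounds $2\le \Delta\le k$ collapse.
\end{itemize}
Hence $\overline{G}[U_3]$ is a graph on $5$ vertices with $4$ edges and all degrees in $\{1,2\}$. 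It is therefore a disjoint union of paths with at least one edge and of cycles.

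First I enumerate these unions. Each path component has one more vertex than it has edges, and each cycle has equally many. So with $5$ vertices and $4$ edges there is exactly one path component, and the remaining components are cycles. The options are $P_5$, $K_3+K_2$, and $C_4$ plus an isolated vertex. The last is excluded because every degree is at least $1$.

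To exclude $K_3+K_2$, I apply Claim \ref{specST-th2-claim6-7}(ii): here $r=2$, and the $K_2$ component has maximum degree $1<2$, a contradiction. Thus $\overline{G}[U_3]\cong P_5$. For $k=2$ this is exactly $H_2$: two copies of $K_{1,2}$ with one leaf identified, where the identified leaf is the middle vertex of the path and the two centres are its neighbours.

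Next I pin down the edges between $V_p$ and $U_3$. By Claim \ref{specST-th2-claim5}(iii):
\begin{itemize}
\item the three internal vertices of the $P_5$ (the two centres and the common leaf) each have exactly one neighbour in $V_p$;
\item the two end vertices have none.
\end{itemize}
These three internal vertices form the set $W_1$ of vertices with complement degree $2$. By Claim \ref{specST-th2-claim6}(ii),(iv) their $V_p$-neighbourhoods are nested, and $t_1=d_{V_p}(w_1)=\Delta(\overline{G}[U_3])-1=1$. Nested singletons must coincide, so all three are adjacent to the same vertex $u_1\in V_p$. Hence $e(V_p,U_3)=3$ is realised by $u_1$ joined to both centres and to the common leaf.

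This is precisely the description of $H_2(2)$. There $k-1=1$ vertex of $K_{n-2k-1}$ is adjacent to both degree-$k$ vertices of $H_2$ and to the common leaf, and every vertex of $K_5-E(H_2)$ has degree $3$, consistent with $d_G(v)=2k-1$. Therefore $G\cong H_2(2)$. The only step requiring care is excluding $K_3+K_2$; it relies entirely on Claim \ref{specST-th2-claim6-7}(ii), whose edge-switching argument has already been carried out, so I expect no real obstacle.
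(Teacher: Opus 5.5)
Your proof is correct and follows essentially the paper's route. The paper rules out a disconnected $\overline{G}[U_3]$ by counting edges: a cyclic component from Claim \ref{specST-th2-claim6-7}(i) plus another component of maximum degree at least $2$ from Claim \ref{specST-th2-claim6-7}(ii) would force at least $5>4$ edges, and then $|U_3|=e(\overline{G}[U_3])+1$ with $\Delta\le 2$ gives $P_5$; you instead enumerate the path/cycle unions directly and need only Claim \ref{specST-th2-claim6-7}(ii) to exclude $K_3+K_2$, and your explicit determination of the $V_p$--$U_3$ edges via Claims \ref{specST-th2-claim5}(iii) and \ref{specST-th2-claim6}(ii) fills in a step the paper leaves implicit.
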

 \p
 We first prove that $\overline{G}[U_3]$ is connected. By contradiction, assume that $\overline{G}[U_3]$ is disconnected. By Claim \ref{specST-th2-claim6-7}(i)-(ii), we have $e(\overline{G}[U_3])\ge e(C_1)+\cdots+e(C_r)\ge 3+2=5$, contrary to  $e(\overline{G}[U_3])=2k=4$ in Claim \ref{specST-th2-claim5}(ii). Thus $\overline{G}[U_3]$ is connected. By Claim \ref{specST-th2-claim5}(iv), we get $\overline{G}[U_3]$ is tree, and so $\overline{G}[U_3]=P_5$. This proves Claim \ref{specST-th2-claim6v}.
 \q

 In what follows, denote by $W_3=\{v\in U_3: v\notin N_{\overline{G}[U_3]}(w_2) \text{~and~} N_{\overline{G}[U_3]}(v)\cap  (W_1\setminus \{w_1,w_2\})\ne \emptyset\}$ and  $W_4=N_{\overline{G}[U_3]}[w_1]\cup N_{\overline{G}[U_3]}[w_2]$.

\begin{Claim}\label{specST-th2-claim7-0}
If $k\ge 3$ and  $d_{\overline{G}[U_3]}(w_2)\le k-1$, then $W_3= \emptyset$.
\end{Claim}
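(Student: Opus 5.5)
Suppose, to the contrary, that $k\ge 3$, $d_{\overline{G}[U_3]}(w_2)\le k-1$ and $W_3\ne\emptyset$. Pick $v\in W_3$ and an index $i\ge 3$ with $w_i\in W_1$ and $vw_i\in E(\overline{G}[U_3])$. By definition of $W_3$, $vw_2\in E(G)$. The plan is to move the complement edge $vw_i$ so that it becomes a complement edge at $w_2$, and to repair the degrees through $V_p$, so that the new graph again lies in $\mathcal{H}_2(k)$ but has larger spectral radius.

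\textbf{The switched graph.} By Claim \ref{specST-th2-claim6}(ii), $N_{V_p}(w_i)\subseteq N_{V_p}(w_2)$ and $N_{V_p}(w_i)\neq\emptyset$ (Claim \ref{specST-th2-claim5}(iii)). So I take $u\in N_{V_p}(w_i)$ (choose $u=u_{r}$ with $r$ the largest index available) and a vertex $z\in U_2$ with $zw_2\notin E(G)$; such a $z$ exists since $|U_2|\ge n-2k-1-(2k-1)$ is huge. Define
\[
G'=G-vw_2+vw_i-uw_i+zw_2 .
\]
Every vertex of $U_3$ still has degree $2k-1$, and $e(V_p,U_3)$ is still $2k-1$. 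The complement degree of $w_2$ rises by one, to at most $k$ by hypothesis, while all other complement degrees do not increase. Hence $\Delta(\overline{G'}[U_3])\le k$, so $G'\in\mathcal{H}_2(k)$ and $G'$ is $(2k-1)$-edge-connected by Lemma \ref{EDST-thm2-le1}. The partition $\pi$ still satisfies $e_{G'}(\pi)=k(p-1)-1$, so $\tau(G')\le k-1$.

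\textbf{Comparing spectral radii.} By Rayleigh,
\[
\tfrac12\,\mathbf x^T(A(G')-A(G))\mathbf x=(x_{w_2}-x_{w_i})(x_z-x_v)+x_{w_i}(x_z-x_u).
\]
For the second term, (\ref{specST-th2-eq66}) gives $x_z-x_u\ge -\frac{t_2}{\lambda+1}x_{v_0}$. For the first term, $x_z\ge x_{u_0}>\frac{n-2k}{4k}x_{v_0}$ by (\ref{specST-th2-eq17}), so $x_z-x_v$ is of order $\frac{n}{4k}x_{v_0}$. If $N_{V_p}(w_i)\subsetneqq N_{V_p}(w_2)$, the computation in Claim \ref{specST-th2-claim6}(iii) gives $x_{w_2}-x_{w_i}\ge \frac{x_{u_0}-(2k-1)x_{v_0}}{\lambda+1}$, which is of order $\frac{n}{4k\lambda}x_{v_0}$. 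Then the product dominates $\frac{t_2}{\lambda+1}x_{v_0}^2$ because $t_2\le 2k-1$ and $n\ge 8k^2-2k$. This gives $\lambda(G')>\lambda(G)$, a contradiction.

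\textbf{Main obstacle: equal neighbourhoods.} The hard case is $N_{V_p}(w_i)=N_{V_p}(w_2)$. There the ordering only gives $x_{w_2}\ge x_{w_i}$, and the negative term may win. In this case I would drop the quadratic form and argue as in Claim \ref{spectral-spanning-tree-claim9}. Let $\mathbf y$ be the Perron vector of $G'$ and expand $\mathbf y^T(\lambda(G')-\lambda(G))\mathbf x$. In $G'$, the vertex $w_2$ has strictly more $V_p$-neighbours than $w_i$, which has lost $u$. So the argument of Claim \ref{specST-th2-claim6}(iii), applied to $G'$, gives $y_{w_2}-y_{w_i}\gtrsim \frac{n}{4k\lambda'}y_{v_0'}$, where $y_{v_0'}$ is the largest $y$-coordinate on $U_3$. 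Pairing this with $x_z-x_v$, and bounding the cross terms $x_z-x_u$, $y_z-y_u$ via (\ref{specST-th2-eq66}) and its $G'$ analogue, should make the mixed form positive. Again this uses $n\ge 8k^2-2k$ and $t_2\le 2k-1$. I expect the careful bookkeeping of these mixed-vector inequalities to be the main work.
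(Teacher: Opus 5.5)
Your plan is correct and is essentially the paper's proof. Your $G'=G-vw_2+vw_i-uw_i+zw_2$ is the paper's $G_1=G+uw_2-w_2w'+w'w-wu_1$, with $z,v,w_i$ in the roles of $u,w',w$; deleting $w_iu_r$ rather than $wu_1$ changes nothing. Your fallback, the mixed form $\mathbf y^T(A(G')-A(G))\mathbf x$ with $\mathbf y$ the Perron vector of $G'$, is exactly how the paper concludes.

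The only difference is that the paper uses this form in every case, written as
\[
(y_z-y_v)(x_{w_2}-x_{w_i})+(y_z-y_u)x_{w_i}+(x_z-x_v)(y_{w_2}-y_{w_i})+(x_z-x_u)y_{w_i}.
\]
\begin{itemize}
\item The first term is nonnegative, since $x_{w_2}\ge x_{w_i}$ and $y_z>y_v$ by the $G'$-analogue of \eqref{specST-th2-eq17}.
\item The third term is large because $z,u\in N_{G'}(w_2)\setminus N_{G'}(w_i)$, whether or not $N_{V_p}(w_i)=N_{V_p}(w_2)$.
\item The remaining two terms are only $O(t_2/\lambda)$.
\end{itemize}
So your split into strict and equal neighbourhoods, and the Rayleigh-quotient argument for the strict case, are unnecessary.

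That is just as well, because the strict case as written is not supported by the estimates you cite. Inequality \eqref{specST-th2-eq17} only gives $x_{u_0}>\frac{n-2k}{4k}x_{v_0}$, which equals $(2k-1)x_{v_0}$ at $n=8k^2-2k$. There, the bound $x_{u_0}-(2k-1)x_{v_0}$ from Claim~\ref{specST-th2-claim6}(iii) is merely positive, not of order $\frac{n}{4k}x_{v_0}$, and your product need not dominate $\frac{t_2}{\lambda+1}x_{v_0}x_{w_i}$.

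This can be repaired. From $\lambda x_{u_1}\le (n-2k-2)x_{u_1}+t_2x_{v_0}$ and $t_2x_{v_0}<x_{u_0}\le x_{u_1}$ one gets $\lambda<n-2k-1$, and hence $x_{u_0}>\frac{n-4k}{2k}x_{v_0}$. Dropping the case split is simpler, though.
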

\p
On the contrary, suppose that $W_3\neq \emptyset$. Let $w'\in W_3$. Then $w'w_2\in E(G)$ and  $N_{\overline{G}[U_3]}(w')\cap  (W_1\setminus \{w_1,w_2\})\neq \emptyset$. Thus there exits $w\in N_{\overline{G}[U_3]}(w')\cap  (W_1\setminus \{w_1,w_2\})$ such that $d_{\overline{G}[U_3]}(w)\ge 2$. Combining this with Claim \ref{specST-th2-claim5}(iii), we have  $d_{V_p}(w)=d_{\overline{G}[U_3]}(w)-1\ge 1$.  Therefore, by Claim \ref{specST-th2-claim6}(i), we have $wu_1\in E(G)$.

As $d_{\overline{G}[U_3]}(w_2)\le  k-1$, by Claim \ref{specST-th2-claim5}(iii), we have  $d_{V_p}(w_2)=d_{\overline{G}[U_3]}(w_2)-1\le k-2$. Since $|U_2|=n-|U_3|-|U_1|= n-2k-1-t_1$ and $n\ge 8k^2-2k$, by Claim \ref{specST-th2-claim6}(iv), we have  $|U_2|\ge 1$.
As $e(U_2,U_3)=0$, we have $uw_2\notin E(G)$ for any $u\in U_2=V_p\setminus U_1$.
Let $u\in U_2$ and
\begin{align}\label{spec-ST-mian2-1}
G_1=G+uw_2-w_2w'+w'w-wu_1.
\end{align}
Combining this with Claim \ref{specST-th2-claim5}(ii),  we get for any $v\in U_3$,
\begin{align}\label{spec-ST-mian2-1-add}
d_{G_1}(v)=d_G(v)=2k-1.
\end{align}
Moreover, by $\triangle({\overline{G}[U_3]})\le k$ in Claim \ref{specST-th2-claim5}(v)
and $d_{\overline{G}[U_3]}(w_2)\le k-1$, we have
\begin{align}\label{spec-ST-mian2-2}
\triangle({\overline{G_1}[U_3]})\le k.
\end{align}
Thus by Lemma \ref{EDST-thm2-le1}, $G_1$ is $(2k-1)$-edge-connected.
As for any $v\in U_3$,  $d_{\overline{G}[U_3]}(v)=d_{V_p}(v)+1$ in Claim \ref{specST-th2-claim5}(iii), we obtain $d_{\overline{G_1}[U_3]}(v)=d_{G_1[V_p]}(v)+1$.  So by \eqref{spec-ST-mian2-2},  for any $v\in U_3$,
\begin{align}\label{spec-ST-mian2-2-1}
 d_{G_1[V_p]}(v) \le k-1.
\end{align}

Let $\mathbf y$ be the Perron vector of $A(G_1)$.
By $A(G_1)\mathbf{y}=\lambda(G_1)\mathbf{y}$ and
Claim \ref{specST-th2-claim5}(i),  for any $z\in V_p$ and $w\in U_3$, we have
\begin{align}
\lambda(G_1)y_{z}&=\sum_{v\in V_p\setminus \{z\}}y_v+\sum_{v\in N_{G_1}(z)\cap U_3}y_v,
\label{spec-ST-mian2-3}
\\
\lambda(G_1)y_{w}&=\sum_{v\in N_{G_1}(w)\cap V_p}y_v+\sum_{v\in N_{G_1}(w)\cap U_3}y_v. \label{spec-ST-mian2-4}
\end{align}

For short, we  write $\lambda_1$ for $\lambda(G_1)$. Let $y_{z_0}=\min\{y_{v}: v\in V_p\}$ and $y_{w_0}=\max\{y_{v}: v\in U_3\}$.
By (\ref{spec-ST-mian2-1-add}), \eqref{spec-ST-mian2-2-1},    (\ref{spec-ST-mian2-3}), and  (\ref{spec-ST-mian2-4}),   we have
\begin{align*}
\lambda_1y_{z_0}-\lambda_1y_{w_0} &= \sum_{v\in V_p\setminus \{z_0\}}y_v+\sum_{v\in N_{G_1}(z_0)\cap U_3}y_v
-\sum_{v\in N_{G_1}(w_0)\cap V_p}y_v
 -\sum_{v\in N_{G_1}(w_0)\cap U_3}y_v
\\
&\ge \sum_{v\in V_p\setminus  \{z_0\}}y_v - \sum_{v\in N_{G_1}(w_0)\cap V_p}y_v
- \sum_{v\in N_{G_1}(w_0)\cap U_3}y_v
\nonumber \\
&\ge \sum_{v\in V_p\setminus  N_{G_1}(w_0)}y_{z_0} - y_{z_0} -(2k-1)y_{w_0}
\nonumber \\
 &\ge \bigl(|V_p|-(k-1)\bigr)y_{z_0}-y_{z_0}-(2k-1)y_{w_0}
\nonumber \\
 &= (n-3k-1)y_{z_0}-(2k-1)y_{w_0} ,
\end{align*}
which implies that
\begin{align}\label{spec-ST-mian2-5}
(\lambda_1-n+3k+1)y_{z_0}\ge (\lambda_1-2k+1)y_{w_0}.
\end{align}
As $K_{n-2k-1}\subsetneqq G_1 \subsetneqq K_n$, we have
 $n-1>\lambda(G_1)=\lambda_1 > n-2k-2$. This, together with $n\ge 8k^2-2k$ and  (\ref{spec-ST-mian2-5}), yields that
\begin{align}\label{spec-ST-mian2-6}
y_{z_0}\ge \frac{\lambda_1-2k+1}{\lambda_1-n+3k+1}y_{w_0}> \frac{n-2k}{3k}y_{w_0}> y_{w_0}.
\end{align}

Since $G_1=G+uw_2-w_2w'+w'w-wu_1$, where $u\in U_2$,  by Claim \ref{specST-th2-claim6}(i), we have $u,u_1\in N_{G_1[V_p]}(w_2)\setminus  N_{G_1[V_p]}(w)$.
Combining these with \eqref{spec-ST-mian2-2}, (\ref{spec-ST-mian2-4}),   and  Claim \ref{specST-th2-claim6}(ii),   we have
\begin{align*}
&\lambda_1y_{w_2} - \lambda_1y_{w}
\nonumber\\
=&
\sum_{v\in N_{G_1}(w_2)\cap   V_p}y_v
  - \sum_{v\in N_{G_1}(w)\cap V_p}y_v
  +\sum_{v\in N_{G_1}(w_2)\cap U_3}y_v
  - \sum_{v\in N_{G_1}(w)\cap U_3}y_v
  \nonumber\\
=&
\sum_{v\in N_{G_1}(w_2)\cap V_p}y_v
  - \sum_{v\in N_{G_1}(w)\cap  V_p}y_v
  +\big(\sum_{v\in U_3}y_v
  -\sum_{v\in N_{{\overline{G_1}}[U_3]}[w_2]}y_v\big)
   - \big(\sum_{v\in U_3}y_v
  -\sum_{v\in N_{{\overline{G_1}}[U_3]}[w]}y_v\big)
   \nonumber\\
=&
\sum_{v\in N_{G_1}(w_2)\cap V_p}y_v
  - \sum_{v\in N_{G_1}(w)\cap V_p}y_v
    +\sum_{v\in N_{{\overline{G_1}}[U_3]}[w]}y_v
  -\sum_{v\in N_{{\overline{G_1}}[U_3]}[w_2]}y_v
\nonumber \\
\ge &
y_{u}+y_{u_1}+ y_{w} -y_{w_2} -\sum_{v\in N_{{\overline{G_1}}[U_3]}(w_2)}y_v
\nonumber \\
\ge &
y_{u}+y_{u_1}+ y_{w} -y_{w_2} -  k y_{w_0}
\\
\ge & 2y_{z_0} -  k y_{w_0} + y_{w} -y_{w_2}.
\end{align*}
Combining this with (\ref{spec-ST-mian2-6}) and $n\ge 8k^2-2k$, we have
\begin{equation}\label{spec-ST-mian2-7}
y_{w_2} - y_{w}\ge \frac{2y_{z_0} - k y_{w_0}}{\lambda_1+1}\ge
\frac{2n-4k-3k^2}{3k(\lambda_1+1)}y_{w_0} >0.
\end{equation}

As $G_1=G+uw_2-w_2w'+w'w-wu_1$, where $u\in U_2$,  we have $N_{G_1[U_3]}(u_1)=N_{U_3}(u_1)\setminus \{w\}$ and $N_{G_1[U_3]}(u)= \{w_2\}$. Combining these with (\ref{spec-ST-mian2-3})
and  $t_2=d_{U_3}(u_1)$ in Claim \ref{specST-th2-claim6}(ii),   we have
\begin{align*}
\lambda_1y_{u} - \lambda_1y_{u_1}&= y_{u_1} - y_{u} + \sum_{v\in N_{G_1}(u)\cap U_3}y_v  - \sum_{v\in N_{G_1}(u_1)\cap U_3}y_v
\nonumber \\
&= y_{u_1} - y_{u}+y_{w_2} - \sum_{v\in N_{G_1}(u_1)\cap (U_3\setminus\{w\})}y_v
\nonumber \\
&\ge
y_{u_1}-y_{u}-(t_2-1)y_{w_0}.
\end{align*}
Thus
\begin{equation}\label{spec-ST-mian2-8}
y_{u} - y_{u_1}\ge \frac{1- t_2}{\lambda_1+1}y_{w_0}.
\end{equation}

As $e(V_p, U_3)=2k-1$ in Claim \ref{specST-th2-claim5}(ii) and $t_2\le e(V_p, U_3)$, we have $t_2\le 2k-1$. Since $w\in U_3\setminus\{w_1,w_2\}$,  it follows that $N_{V_p}(w) \subseteq N_{V_p}(w_2)$ by Claim \ref{specST-th2-claim6}(ii). Thus by Claim \ref{specST-th2-claim6}(iii), we have  $x_{w_2}\ge x_{w}$.
Combining this with \eqref{spec-ST-mian2-6}, \eqref{spec-ST-mian2-8}, \eqref{specST-th2-eq17}, \eqref{spec-ST-mian2-7}, \eqref{specST-th2-eq66}, $t_2\le 2k-1$, $n-1>\lambda_1 > n-2k-2$, $\lambda > n-2k-2$   and  $n\ge 8k^2-2k$, we have
\begin{align*}
&{\mathbf y}^T (\lambda(G_1)-\lambda(G)) {\mathbf x}= {\mathbf y}^T (A(G_1)-A(G)) {\mathbf x}
\\
=& y_{u}  x_{w_2}+y_{w_2}x_{u} -y_{w_2}x_{w'}-y_{w'}x_{w_2}
+y_{w'}x_{w}+y_{w}x_{w'}-y_{w}x_{u_1}-y_{u_1}x_{w}
\\
=& (y_{u} -y_{w'} )(x_{w_2}-x_{w})+(y_{u} -y_{u_1})x_{w}+
(x_{u} -x_{w'} )(y_{w_2}-y_{w})+(x_{u} -x_{u_1})y_{w}
\nonumber\\
\ge& (y_{u} -y_{u_1})x_{w}+
(x_{u} -x_{w'} )(y_{w_2}-y_{w})+(x_{u} -x_{u_1})y_{w}
\\
\ge& (y_{u} -y_{u_1})x_{w}+
(x_{u_0} -x_{v_0} )(y_{w_2}-y_{w})+(x_{u} -x_{u_1})y_{w}
\nonumber\\
\ge & \frac{1-t_2}{\lambda_1+1}y_{w_0}x_{w} + \frac{(2n-4k-3k^2)(n-6k)}{12k^2(\lambda_1+1)}x_{v_0} y_{w_0}-\frac{t_2}{\lambda+1}x_{v_0}y_{w}
\\
>& \left( -\frac{2t_2}{n-2k-1}  + \frac{(2n-4k-3k^2)(n-6k)}{12k^2n}\right)x_{w_0}y_{w_0}
\nonumber\\
\ge & \left( -\frac{2(2k-1)}{n-2k-1}  + \frac{(2n-4k-3k^2)(n-6k)}{12k^2n}\right)x_{w_0}y_{w_0}
\\
=& \frac{\left[(2n-3k^2-20k-2)n -24k^3+83k^2+16k \right]n -36k^4-66k^3-24k^2}
{12k^2n(n-2k-1)}  x_{w_0}y_{w_0}
>0,
\end{align*}
a contradiction.  Hence, $W_3= \emptyset$. This proves  Claim \ref{specST-th2-claim7-0}.\q

\begin{Claim}\label{specST-th2-claim7}
If $k\ge 3$, then each of the following holds:
\\
(i) $U_3=W_4$ when $d_{\overline{G}[U_3]}(w_2)\le k-1$;
\\
(ii) $\overline{G}[U_3]$ is a tree with $d_{\overline{G}[U_3]}(w_1)=d_{\overline{G}[U_3]}(w_2)=k$.
\end{Claim}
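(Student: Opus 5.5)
The plan is to prove (i) by pure structure from Claim~\ref{specST-th2-claim7-0}. Then (ii) follows by counting vertices and edges of $\overline{G}[U_3]$, plus one spectral exclusion. Throughout, recall from Claim~\ref{specST-th2-claim5} that $\overline{G}[U_3]$ has $2k+1$ vertices and $2k$ edges, with $2\le \Delta(\overline{G}[U_3])\le k$ and $d_{\overline{G}[U_3]}(v)=d_{V_p}(v)+1\ge 1$ for every $v\in U_3$. By \eqref{specST-th2-w2eq1} the vertices of $W_2$ are exactly the leaves of $\overline{G}[U_3]$, and $W_1$ is listed so that $d_{\overline{G}[U_3]}(w_1)=\Delta(\overline{G}[U_3])\ge d_{\overline{G}[U_3]}(w_2)\ge\cdots$.

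\emph{Proof of (i).} Assume $d_{\overline{G}[U_3]}(w_2)\le k-1$, so that $W_3=\emptyset$ by Claim~\ref{specST-th2-claim7-0}. Suppose some $v\in U_3\setminus W_4$ exists. No $\overline{G}[U_3]$-neighbour of $v$ equals $w_1$ or $w_2$, and $v$ is not adjacent to $w_2$. If $v$ had a neighbour in $W_1\setminus\{w_1,w_2\}$, then $v\in W_3$, which is impossible. Hence all neighbours of $v$ lie in $W_2$ and are leaves.

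If $d_{\overline{G}[U_3]}(v)\ge 2$, then $v\in W_1\setminus\{w_1,w_2\}$. Take a leaf neighbour $x$ of $v$. Its only neighbour is $v\ne w_2$, so $x\notin N_{\overline{G}[U_3]}(w_2)$, and $x$ has the neighbour $v\in W_1\setminus\{w_1,w_2\}$. Thus $x\in W_3$, a contradiction. Therefore $v$ is a leaf whose neighbour is also a leaf, i.e.\ $v$ lies in a component isomorphic to $K_2$. This component is different from the component containing $w_1$, so $r\ge 2$. This contradicts Claim~\ref{specST-th2-claim6-7}(ii). Hence $U_3=W_4$.

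\emph{Proof of (ii).} First I rule out $d_{\overline{G}[U_3]}(w_2)\le k-1$. By (i), $2k+1=|W_4|\le |N[w_1]|+|N[w_2]|\le (k+1)+k$. So equality holds throughout: $d(w_1)=k$, $d(w_2)=k-1$, and $N[w_1]\cap N[w_2]=\emptyset$. The two stars account for $2k-1$ edges, so there is exactly one further edge $e$. If $e$ joins the two stars, $\overline{G}[U_3]$ is a tree with degree sequence $(k,k-1,2,2,1,\dots)$ or similar. If $e$ lies inside one star's closed neighbourhood, it creates a triangle and leaves two components.

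In each configuration I would use a local switching of the same type as in Claims~\ref{specST-th2-claim6-7} and~\ref{specST-th2-claim7-0}. In the tree case, the switch moves the attachment of $e$ so that some leaf becomes a second neighbour of $w_2$; this turns $\overline{G}[U_3]$ into $H_2$. In the two-component case, I would use the two-edge swap $G+ab-ac+cd-db$ of Claim~\ref{specST-th2-claim6-7}(ii). For each switch I would verify $\tau\le k-1$ and $(2k-1)$-edge-connectivity via Lemma~\ref{EDST-thm2-le1}, keeping the maximum complement degree at most $k$. Then I would show $\mathbf{y}^T(A(G')-A(G))\mathbf{x}>0$ using \eqref{specST-th2-eq17}, \eqref{specST-th2-eq66}, Claim~\ref{specST-th2-claim6}(iii) and $n\ge 8k^2-2k$. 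This mirrors the estimate in Claim~\ref{specST-th2-claim7-0} and contradicts the maximality of $G$.

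Hence $d_{\overline{G}[U_3]}(w_2)=k=d_{\overline{G}[U_3]}(w_1)$. It remains to show $\overline{G}[U_3]$ is a tree. Since it has $2k+1$ vertices and $2k$ edges, it suffices to show it is connected. If it had $r\ge2$ components, then by Claim~\ref{specST-th2-claim6-7}(ii),(iii) every component has maximum degree at least $2$. All vertices other than $w_1,w_2$ have degree at most $k-1$, and one component contains a cycle. Summing degrees, $\sum d = 4k$ with $d(w_1)+d(w_2)=2k$ and the other $2k-1$ vertices each of degree at least $1$. So at most one vertex outside $\{w_1,w_2\}$ has degree $2$, and the rest are leaves. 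Then $w_1$ and $w_2$ must share a leaf or be adjacent for the count to close, which forces connectivity. Any residual cyclic configuration is killed by the same swap as in Claim~\ref{specST-th2-claim6-7}(ii).

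The main obstacle is the spectral exclusion of $d(w_2)=k-1$. There I must choose the switch carefully so that the Perron-vector comparison has a clean sign, and I expect it to need case-by-case bounds like those in Claim~\ref{specST-th2-claim7-0}.
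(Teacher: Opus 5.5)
\textbf{Verdict: there is a genuine gap.} Your part (i) is essentially the paper's argument. In (ii), the counting that forces $d(w_1)=k$, $d(w_2)=k-1$, disjoint closed neighbourhoods and one extra edge $e$ also matches the paper, and your connectivity argument for $d(w_2)=k$ works. The gap is the step you flag yourself: excluding $d_{\overline{G}[U_3]}(w_2)=k-1$ is only sketched, and the tool you name cannot do it.

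First, you missed that Claim~\ref{specST-th2-claim7-0} already kills two of your three configurations without any spectral argument. Suppose $e=ab$ with $a\in N(w_1)$ and $b\in N(w_2)$, or $e=aa'$ with $a,a'\in N(w_1)$. Then $a\notin N_{\overline{G}[U_3]}(w_2)$, and $a$ has a neighbour of complement-degree $2$ in $W_1\setminus\{w_1,w_2\}$. So $a\in W_3=\emptyset$, a contradiction. The only survivor is therefore $\overline{G}[U_3]\cong K_{1,k}+\bigl(K_1\vee(K_2+(k-3)K_1)\bigr)$, with a triangle $w_2w_3w_4$. By Claim~\ref{specST-th2-claim6}, $N_{V_p}(w_1)=\{u_1,\dots,u_{k-1}\}$, $N_{V_p}(w_2)=\{u_1,\dots,u_{k-2}\}$, $N_{V_p}(w_3)=N_{V_p}(w_4)=\{u_1\}$, and the star leaves have no $V_p$-neighbours.

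For this configuration the swap of Claim~\ref{specST-th2-claim6-7}(ii) is unavailable, because there is no $K_2$ component. Any switch inside $U_3$ that preserves complement degrees also preserves every $d_{V_p}(v)$. So it can never produce $H_2(k)$, where $w_2$ has $k-1$ neighbours in $V_p$; the same objection applies to your tree-case switch. Worse, the natural merging switches have the wrong sign. Take $a$ a leaf of the $w_1$-star and $G'=G+w_3w_4+w_1a-w_1w_3-aw_4$. Then $\mathbf{x}^T(A(G')-A(G))\mathbf{x}=2(x_{w_3}-x_a)(x_{w_4}-x_{w_1})<0$, because Claim~\ref{specST-th2-claim6}(iii) gives $x_{w_3}>x_a$ and $x_{w_1}>x_{w_4}$. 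Using $w_2w_3$ or $w_2w_4$ as the cycle edge gives negative products in the same way.

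The missing idea is a switch that also moves edges between $V_p$ and $U_3$. It should give $w_2$ a new $V_p$-neighbour and turn a leaf $w$ of the $w_1$-star into the common leaf. The paper takes
$G_2=G+u_1w-ww_2+w_2u_{k-1}-u_1w_3+w_3w_4-w_4u_1\cong H_2(k)$.
It then shows $\boldsymbol{\eta}^T(A(G_2)-A(G))\mathbf{x}>0$, using the Perron vectors of both graphs, the bounds \eqref{specST-th2-eq91}--\eqref{specST-th2-eq101-1}, and $r<\lambda,\lambda_2<R$.

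The two-vector comparison is genuinely needed. For $k=3$ one has $x_{w_2}=x_{w_3}=x_{w_4}$, and $\mathbf{x}^T(A(G_2)-A(G))\mathbf{x}<0$ even for this correct switch. So a Rayleigh-quotient test with $\mathbf{x}$ alone cannot finish the argument.
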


\p  By the definition of $W_4$, we have $W_4\subseteq U_3$.
We next prove $U_3\subseteq W_4$.
On the contrary, suppose that $U_3\nsubseteq W_4$.
By $W_4\subseteq U_3$,  we have $W_4\subsetneqq U_3$.
Choose a vertex $w\in U_3\setminus W_4$ such that $d_{\overline{G}[U_3]}(w)$ is as large as possible.
By Claim \ref{specST-th2-claim7-0}, we have $N_{\overline{G}[U_3]}(w)\cap  (W_1\setminus \{w_1,w_2\})=\emptyset $.
Thus $N_{\overline{G}[U_3]}(w)\subseteq  W_2$.
As $U_3=W$ in Claim \ref{specST-th2-claim5}(iv), we have $d_{\overline{G}[U_3]}(w)\ge 1$. Combining this with  $N_{\overline{G}[U_3]}(w)\subseteq  W_2$  and Claim \ref{specST-th2-claim6-7}(ii), we have  $d_{\overline{G}[U_3]}(w)\ge 2$.
Let $w'\in N_{\overline{G}[U_3]}(w)$. Since $N_{\overline{G}[U_3]}(w)\subseteq  W_2$, we have $d_{\overline{G}[U_3]}(w')=1$.
This implies that  $w'\in W_3$, contrary to $W_3 =\emptyset$ by   Claim \ref{specST-th2-claim7-0}. Thus,  Claim \ref{specST-th2-claim7}(i) holds.

To show  Claim \ref{specST-th2-claim7}(ii), it suffices to show that    $d_{\overline{G}[U_3]}(w_2)= k$. Firstly, if $d_{\overline{G}[U_3]}(w_2)= k$, then   Claim \ref{specST-th2-claim5}(iii) yields that  $d_{\overline{G}[U_3]}(w_1)=d_{\overline{G}[U_3]}(w_2)=k$.
If  $\overline{G}[W_4]$ is disconnected, then   $|U_3|\ge d_{\overline{G}[U_3]}(w_1)+1+d_{\overline{G}[U_3]}(w_2)+1=2k+2$, contrary to $|U_3|=2k+1$. Thus we have $\overline{G}[W_4]$ is connected.
If  $\overline{G}[U_3]$ is disconnected, then by Claim \ref{specST-th2-claim6-7}(ii), we have  $e(\overline{G}[U_3]) \ge e(\overline{G}[W_4])+2\ge 2k-1+2=2k+1$, contrary to $e(\overline{G}[U_3])=2k$ in Claim \ref{specST-th2-claim5}(ii). Thus we have $\overline{G}[U_3]$ is connected.
Combining this with $|U_3|=2k+1=e(\overline{G}[U_3])+1$ in Claim \ref{specST-th2-claim5}(iv), we get that
$\overline{G}[U_3]$ is a tree.

Next, we will show that $d_{\overline{G}[U_3]}(w_2)= k$ by contradiction. Suppose that $d_{\overline{G}[U_3]}(w_2)\neq k$. By Claim \ref{specST-th2-claim5}(v), we have
$d_{\overline{G}[U_3]}(w_2)\le k-1$. Combining this with Claims \ref{specST-th2-claim7}(i) and  \ref{specST-th2-claim5}(v), we have $$|U_3|\le d_{\overline{G}[U_3]}(w_1)+1+d_{\overline{G}[U_3]}(w_2)+1\le 2k+1.$$
As $|U_3|=2k+1$, we have $d_{\overline{G}[U_3]}(w_1)=k$,  $d_{\overline{G}[U_3]}(w_2)=k-1$  and $e_{\overline{G}}(N_{\overline{G}[U_3]}[w_1], N_{\overline{G}[U_3]}[w_2])=0$.
Thus by Claim \ref{specST-th2-claim7-0}, we have $\overline{G} (N_{\overline{G}[U_3]}[w_1])=K_{1,k}$. Combining this with $e(\overline{G}[U_3])=2k$ in Claim \ref{specST-th2-claim5}(ii), we get
$e({\overline{G}}[N_{\overline{G}[U_3]}[w_2]]=e(\overline{G}[U_3])-
e({\overline{G}}[N_{\overline{G}[U_3]}[w_1]]=2k-k=k$. Then $\overline{G} (N_{\overline{G}[U_3]}[w_2])=K_1\vee (K_2+(k-3)K_1)$. Therefore, $\overline{G}[U_3]=K_{1,k}+K_1\vee (K_2+(k-3)K_1)$.
By Claims  \ref{specST-th2-claim5}(iii) and    \ref{specST-th2-claim6}(i)-(iii), we have $w_3,w_4\in N_{\overline{G}[U_3]}(w_2)$, $\overline{G}[\{w_2,w_3,w_4\}]=K_3$,
$u_1w_3\in E(G)$, $u_1w_4\in E(G)$, $N_{V_p}(w_1)=\{u_1,u_2,\ldots,u_{k-1}\}$ and $N_{V_p}(w_2)=\{u_1,u_2,\ldots,u_{k-2}\}$.
Let $w\in N_{\overline{G}[U_3]}(w_1)$. As $\overline{G} (N_{\overline{G}[U_3]}[w_1])=K_{1,k}$, we have $N_{\overline{G}[U_3]}(w)=\{w_1\}$.  Thus  $ww_2\in E(G)$ and  $u_1w\notin E(G)$ by Claim \ref{specST-th2-claim5}(iii).
Now, $G$ is a graph as shown in   Figure \ref{fig3}.
By \eqref{specST-th2-eq15},
we have  $x_{w_3}=x_{w_4}$,
\begin{align*}
\lambda x_{w_2}=&\sum_{v\in N_G(w_2)\cap U_3 }x_w+\sum_{v\in N_G(w_2)\cap V_p} x_v >  \sum_{v\in N_G(w_2)\cap V_p} x_v \ge (k-2)x_{u_0},\,\,\text{and}
\\
\lambda x_{w_3} =&\sum_{v\in N_G(w_3)\cap U_3 }x_w+\sum_{v\in N_G(w_3)\cap V_p} x_v \le x_{u_1}+\sum_{v\in N_G(w_3)\cap U_3} x_v \le
x_{u_1}+(2k-2)x_{v_0},
\end{align*}
as  $N_{\overline{G}[U_3]}(w_3)=\{w_2,w_4\}.$ Thus
\begin{align}\label{specST-th2-eq91}
 x_{w_2} > \frac{k-2}{\lambda} x_{u_0}  \text{~and~}
 x_{w_3}\le \frac{1}{\lambda} x_{u_1}+\frac{2k-2}{\lambda} x_{v_0},
 \end{align}
Since  $\{w_1,w_2,w_3,w_4\}\subseteq N_{U_3}(u_1)$ and $u_0\in U_2$,  it follows from \eqref{specST-th2-eq13} that
\begin{align*}
  \lambda x_{u_1}-\lambda x_{u_0}=  \sum_{v\in V_p\setminus \{u_1\}}x_v -\sum_{v\in V_p\setminus \{u_0\}}x_v+ \sum_{v\in N_G(u_1)\cap U_3 }x_v-\sum_{v\in N_G(u_0)\cap U_3 }x_v
\le   x_{u_0} - x_{u_1}  + 4x_{v_0},
\end{align*}
which implies that
\begin{align}\label{specST-th2-eq91-1}
 x_{u_1}\le  x_{u_0}+\frac{4}{\lambda+1}x_{v_0}.
 \end{align}

\begin{figure}[htbp]
\centering
\begin{tikzpicture}[scale=0.7]

\coordinate (u1) at (-2.8,5.8);
\coordinate (u2) at (-1.5,6.0);
\coordinate (u{k-1}) at (1.35,6.0);
\coordinate (uk) at ( 2.8,5.8);

\coordinate (w1) at (-2.7,0.8);
\coordinate (w2) at ( 2.7,0.8);

\coordinate (v1)  at (-3.35,-0.85);
\coordinate (vk)  at (-2.10,-0.85);
\coordinate (v{k+1}) at (-1.10,-0.85);
\coordinate (w3) at (0.10,-0.85);
\coordinate (w4) at (1.10,-0.85);
\coordinate (vp1) at ( 2.10,-0.85);
\coordinate (vpk) at ( 3.35,-0.85);
\draw (-0.1,5.9) ellipse (5.0cm and 1.75cm);
\draw ( 0.0,0.0) ellipse (5.35cm and 2.45cm);
\node[right] at (4.95,5.90) {$K_{n-2k-1}$};
\node[right] at (5.30,0.10) {$K_{2k+1}-\overline{G}[U_3]$};

\draw (u1)--(w1);
\draw (u1)--(w2);
\draw (u1)--(w3);
\draw (u1)--(w4);
\draw (u2)--(w1);
\draw (u2)--(w2);
\draw (uk)--(w1);
\draw (u{k-1})--(w1);
\draw (u{k-1})--(w2);

\draw (w3)--(w2);
\draw (w1)--(v1);
\draw (w1)--(vk);
\draw (w1)--(v{k+1});
\draw (w2)--(vp1);
\draw (w2)--(vpk);
\draw (w2)--(w4);
\draw (w3)--(w4);
\node[inner sep=2pt] at (-3.03,-0.02) {$\times$};
\node[inner sep=2pt] at (-2.40,-0.02) {$\times$};
\node[inner sep=2pt] at (-1.82,-0.02) {$\times$};
\node[inner sep=2pt] at ( 1.42,-0.02) {$\times$};
\node[inner sep=2pt] at (1.95,-0.02) {$\times$};
\node[inner sep=2pt] at ( 2.40,-0.02) {$\times$};
\node[inner sep=2pt] at ( 3.03,-0.02) {$\times$};
\node[inner sep=2pt] at ( 0.62,-0.82) {$\times$};

\draw[fill=white] (u1) circle (0.075cm);
\draw[fill=white] (u2) circle (0.075cm);
\draw[fill=white] (u{k-1}) circle (0.075cm);
\draw[fill=white] (uk) circle (0.075cm);
\draw[fill=white] (w1) circle (0.075cm);
\draw[fill=white] (w3) circle (0.075cm);
\draw[fill=white] (w4) circle (0.075cm);
\draw[fill=white] (w2) circle (0.075cm);
\draw[fill=white] (v1) circle (0.075cm);
\draw[fill=white] (vk) circle (0.075cm);
\draw[fill=white] (v{k+1}) circle (0.075cm);
\draw[fill=white] (vp1) circle (0.075cm);
\draw[fill=white] (vpk) circle (0.075cm);

\node[above] at (u1) {$u_1$};
\node[above] at (u2) {$u_2$};
\node[above] at (u{k-1}) {$u_{k-2}$};
\node[above] at (uk) {$u_{k-1}$};
\node at (0.15,5.95) {$\cdots$};

\node[above left]  at (w1) {$w_1$};
\node[above right] at (w2) {$w_2$};

\node[below]  at (w3) {$w_3$};
\node[below]  at (w4) {$w_4$};

\node[below] at (v1)  {$v_1$};
\node[below] at (vk)  {$v_{k-1}$};
\node[below] at (v{k+1})  {$w$};
\node[below] at (vp1) {$v'_1$};
\node[below] at (vpk) {$v'_{k-3}$};
\node at (-2.72,-0.85) {$\cdots$};
\node at ( 2.72,-0.85) {$\cdots$};

\end{tikzpicture}
\caption{The graph $G$.~~~~~~~~}\label{fig3}
\end{figure}

Let $G_2=G+u_1w-ww_2+w_2u_{k-1}-u_1w_3+w_3w_4-w_4u_1$. Then $G_2\cong H_2(k)$.
Let $\boldsymbol{\eta}$ be the Perron vector of $A(G_2)$ and  $\eta_v$ be the coordinate in $\boldsymbol{\eta}$  corresponding to  vertex $v\in V(G_2)$.
For short, write $\lambda_2$ for $\lambda(G_2)$.
Let $\eta_{z_1}=\min\{\eta_{w}: w\in V_p\}$ and $ \eta_{z_2}=\max\{x_{w}: w\in U_3\}$. Since $e(U_3,V_p)=2k-1<n-2k-1=|V_p|$, we have $N_G[z_1]=V_p$ by the choice of $z_1$. Thus, $z_1z_2\not\in E(G_2)$.  As $N_{G_2}(w_3)\setminus\{w_4\}=N_{G_2}(w_4)\setminus\{w_3\}$, we get  $\eta_{w_3}=\eta_{w_4}$.  Furthermore,  we also have $d_{G_2[V_p]}(w_2)=d_{G[V_p]}(w_2)+1=k-1$,
$d_{G_2}(w_3)=d_{G}(w_3)=2k-1$ and $d_{G_2[V_p]}(w_3)=d_{G}(w_3)-1=0$. Thus
 \begin{align*}
 \lambda_2 \eta_{w_2}=& \sum_{v\in N_{G_2}(w_2)\cap V_p} \eta_v+\sum_{v\in N_{G_2}(w_2)\cap U_3} \eta_v >\sum_{v\in N_{G_2}(w_2)\cap V_p} \eta_v \ge (k-1)\eta_{z_1},
 \\
 \lambda_2 \eta_{w_3}=& \sum_{v\in N_{G_2}(w_3)\cap U_3} \eta_v \le   (2k-1)\eta_{z_2},
 \end{align*}
 which implies that
 \begin{align}\label{specST-th2-eq92}
 \eta_{w_2} \ge \frac{k-1}{\lambda_2}\eta_{z_1} \text{~and~}
 \eta_{w_3} \le \frac{2k-1}{\lambda_2}\eta_{z_2}.
 \end{align}
Since  $N_{G_2[U_3]}(u_1) \supseteq  N_{G_2[U_3]}(u)$ for any $u\in V_p$  and $d_{G_2[U_3]}(u_1)=3$,  we get for any $u\in V_p$,
\begin{align*}
 \lambda_2 \eta_{u_1} - \lambda_2 \eta_{u} = & \sum_{v\in N_{G_2}(u_1)\cap V_p}\eta_v-\sum_{v\in N_{G_2}(u)\cap V_p}\eta_v + \sum_{v\in N_{G_2}(u_1)\cap U_3}\eta_v - \sum_{v\in N_{G_2}(u)\cap U_3}\eta_v
\\
=& \eta_{u} - \eta_{u_1} + \sum_{v\in N_{G_2}(u_1)\cap U_3}\eta_v -\sum_{v\in N_{G_2}(u)\cap U_3}\eta_v
\ge   \eta_{u} - \eta_{u_1},
\end{align*}
and
\begin{align*}
 \lambda_2 \eta_{u} - \lambda_2 \eta_{u_1} = & \sum_{v\in N_{G_2}(u)\cap V_p}\eta_v- \sum_{v\in N_{G_2}(u_1)\cap V_p}\eta_v +\sum_{v\in N_{G_2}(u)\cap U_3}\eta_v - \sum_{v\in N_{G_2}(u_1)\cap U_3}\eta_v
\\
=& \eta_{u_1} - \eta_{u} +\sum_{v\in N_{G_2}(u)\cap U_3}\eta_v- \sum_{v\in N_{G_2}(u_1)\cap U_3}\eta_v
\\
\ge & \eta_{u_1} - \eta_{u} - \sum_{v\in N_{G_2}(u_1)\cap U_3}\eta_v \ge \eta_{u_1} - \eta_{u} - 3\eta_{z_2},
\end{align*}
which implies that
\begin{align}\label{specST-th2-eq93}
  \eta_{u} \le \eta_{u_1}\le \eta_{u} + \frac{ 3}{\lambda_2+1}\eta_{z_2}.
 \end{align}
Since $z_1z_2\not\in E(G_2)$, $N_{G_2[V_p]}[u]=N_{G[V_p]}[u]=V_p$ for any $u\in V_p$ and $d_{G_2}(v)=d_{G}(v)=2k-1$ for any $v\in U_3$, it follows that
 \begin{align*}
 \lambda_2 \eta_{z_1} - \lambda_2 \eta_{z_2}
 =& \sum_{v\in N_{G_2}(z_1)\cap V_p} \eta_v+\sum_{v\in N_{G_2}(z_1)\cap U_3} \eta_v - \sum_{v\in N_{G_2}(z_2)\cap V_p} \eta_v-\sum_{v\in N_{G_2}(z_2)\cap U_3} \eta_v
 \\
 \ge & \sum_{v\in N_{G_2}(z_1)\cap V_p} \eta_v - \sum_{v\in N_{G_2}(z_2)\cap V_p} \eta_v-\sum_{v\in N_{G_2}(z_2)\cap U_3} \eta_v
  \\
 \ge & \bigl(d_{G_2[V_p]}(z_1)-d_{G_2[V_p]}(z_2)\bigr) \eta_{z_1}
     -\bigl(d_{G_2}(z_2)-d_{G_2[V_p]}(z_2)\bigr)\eta_{z_2}
 \\
 \ge & \bigl(n-2k-2-d_{G_2[V_p]}(z_2)\bigr) \eta_{z_1}
     -\bigl(2k-1-d_{G_2[V_p]}(z_2)\bigr)\eta_{z_2} ,
 \end{align*}
 which implies that
\begin{align}\label{specST-th2-eq100}
\eta_{z_1} \ge \frac{\lambda_2-2k+1+d_{G_2[V_p]}(z_2)}{\lambda_2-n+2k+2+d_{G_2[V_p]}(z_2) }\eta_{z_2}.
 \end{align}
As $K_{n-2k-1}\subsetneqq G_2 $, we have $n-1>\lambda_2>n-2k-2$.  By
\eqref{specST-th2-eq100} and $n\ge 8k^2-2k$, we have $\eta_{z_1}>\eta_{z_2}$.  As $G_2\cong H_2(k)$, we have for any $v\in U_3$, $0\le d_{G_2[V_p]}(v)\le k-1$. Thus by \eqref{specST-th2-eq93},
 \begin{align*}
\lambda_2 \eta_{z_2}=\sum_{v\in N_{G_2}(z_2)\cap V_p} \eta_v+\sum_{v\in N_{G_2}(z_2)\cap U_3} \eta_v \le (k-1) \eta_{u_1}+k\eta_{z_2} \le (k-1) (\eta_{z_1}+\eta_{z_2})+k\eta_{z_2}.
 \end{align*}
Combining this with $\lambda_2>n-2k-2$, $n\ge 8k^2-2k$ and $k\ge 3$, we have
\begin{align}\label{specST-th2-eq101}
\eta_{z_1} \ge \frac{\lambda_2-2k+1}{k-1}\eta_{z_2}> \frac{n-4k-1}{k-1}\eta_{z_2}\ge \frac{8k^2-6k-1}{k-1}\eta_{z_2}\ge 8k\eta_{z_2}.
\end{align}
As $d_{G_2[U_3]}(u_1)=3$, we have
\begin{align*}
\lambda_2 \eta_{u_1} =     \sum_{v\in N_{G_2}(u_1)\cap V_p}\eta_v +  \sum_{v\in N_{G_2}(u_1)\cap U_3}\eta_v
\le   (n-2k-2)\eta_{u_1} + 3\eta_{z_2}< (n-2k+1)\eta_{u_1}.
\end{align*}
Thus $\lambda_2 < n-2k+1$.

By \eqref{specST-th2-eq91-1}, Claim \ref{specST-th2-claim5}(ii)--(iii) and (v),  we have
\begin{align*}
\lambda x_{v_0}=\sum_{v\in N_{G}(v_0)\cap V_p} x_v+\sum_{v\in N_{G}(v_0)\cap U_3} \eta_v \le (k-1) x_{u_1}+k x_{v_0} \le (k-1) (x_{u_0}+x_{v_0})+k x_{v_0}.
 \end{align*}
Combining this with $\lambda>n-2k-2$, $n\ge 8k^2-2k$ and $k\ge 3$, we have
\begin{align}\label{specST-th2-eq101-1}
x_{u_0} \ge \frac{\lambda-2k+1}{k-1}x_{v_0}> \frac{n-4k-1}{k-1}x_{v_0}\ge \frac{8k^2-6k-1}{k-1}x_{v_0}\ge 8kx_{v_0}.
\end{align}
As $d_{U_3}(u_1)=4$, we have
\begin{align*}
\lambda x_{u_1} =     \sum_{v\in N_{G}(u_1)\cap V_p}x_v +  \sum_{v\in N_{G}(u_1)\cap U_3}x_v
\le   (n-2k-2)x_{u_1} + 4x_{z_2}< (n-2k+2)x_{u_1}.
\end{align*}
Thus $\lambda < n-2k+2$.

Let $r=n-2k-2$ and $R=n-2k+2$.  Then $r<\lambda<R$ and $r<\lambda_2<R$. Combining these with $x_{w_3}=x_{w_4}$, $\eta_{w_3}=\eta_{w_4}$, \eqref{specST-th2-eq91}, \eqref{specST-th2-eq91-1}, \eqref{specST-th2-eq92},  \eqref{specST-th2-eq93}, \eqref{specST-th2-eq101}, \eqref{specST-th2-eq101-1}  and $n\ge 8k^2-2k$,  we have
 \begin{align*}
&{\mathbf \eta}^T (\lambda(G_2)-\lambda(G)) {\mathbf x}= {\mathbf \eta}^T (A(G_2)-A(G)) {\mathbf x}
\\
= &  \eta_{u_1}x_w+\eta_wx_{u_1}-\eta_wx_{w_2}-\eta_{w_2}x_w
     +\eta_{w_2}x_{u_{k-1}}+\eta_{u_{k-1}}x_{w_2}-\eta_{u_1}x_{w_3}
\\
  & -\eta_{w_3}x_{u_1}+\eta_{w_3}x_{w_4}+\eta_{w_4}x_{w_3}
    -\eta_{w_4}x_{u_1}-\eta_{u_1}x_{w_4}
\\
= &  \eta_{w_2}(x_{u_{k-1}}-x_w) +x_{w_2}\eta_{u_{k-1}}
   +\eta_w(x_{u_1}-x_{w_2}) +\eta_{u_1}x_w
   -2\eta_{u_1}x_{w_3}+2\eta_{w_3}(x_{w_3}-x_{u_1})
\\
> &  \eta_{w_2}(x_{u_0}-x_{v_0}) +x_{w_2}\eta_{z_1}
     -2\eta_{u_1}x_{w_3}-2\eta_{w_3}x_{u_1}
\\
\ge & \frac{k-1}{\lambda_2}\eta_{z_1}(x_{u_0}-x_{v_0})
   + \frac{k-2}{\lambda}x_{u_0}\eta_{z_1}
   - 2\big(\eta_{z_1} + \frac{3}{\lambda_2+1}\eta_{z_2}\big)
       \big(\frac{1}{\lambda}x_{u_1} +\frac{2k-2}{\lambda} x_{v_0}\big)
 \\
   & - \frac{4k-2}{\lambda_2}\eta_{z_2}\big(x_{u_0}+\frac{4}{\lambda+1}x_{v_0}\big)
 \\
 = & \frac{k-1}{\lambda_2}\eta_{z_1} x_{u_0}-
     \frac{k-1}{\lambda_2}\eta_{z_1} x_{v_0}
   + \frac{k-2}{\lambda}x_{u_0}\eta_{z_1}
   - \frac{2}{\lambda}\eta_{z_1}x_{u_1} - \frac{4k-4}{\lambda}\eta_{z_1}x_{v_0}
  \\
   & -\frac{6}{(\lambda_2+1)\lambda}\eta_{z_2}x_{u_1}
  - \frac{4k-2}{\lambda_2}\eta_{z_2} x_{u_0}  - \frac{12k-12}{\lambda(\lambda_2+1)}\eta_{z_2} x_{v_0} - \frac{16k-8}{\lambda_2(\lambda+1)}\eta_{z_2} x_{v_0}
\\
\ge & \frac{k-1}{\lambda_2}\eta_{z_1} x_{u_0}- \frac{k-1}{\lambda_2}\eta_{z_1} x_{v_0}
   + \frac{k-2}{\lambda}x_{u_0}\eta_{z_1}
   - \frac{2}{\lambda}\eta_{z_1}\big(x_{u_0}+\frac{4}{\lambda+1}x_{v_0}\big)  - \frac{4k-4}{\lambda}\eta_{z_1}x_{v_0}
  \\
  & -\frac{6}{(\lambda_2+1)\lambda}\eta_{z_2}\big(x_{u_0}+\frac{4}{\lambda+1}x_{v_0}\big)
  - \frac{4k-2}{\lambda_2}\eta_{z_2} x_{u_0} - \frac{28k-20}{r^2}\eta_{z_2} x_{v_0}
\\
= & \bigl(\frac{k-1}{\lambda_2} + \frac{k-4}{\lambda}\bigr) x_{u_0}\eta_{z_1}
- \frac{k-1}{\lambda_2}\eta_{z_1} x_{v_0}
- \frac{4k-4}{\lambda}\eta_{z_1}x_{v_0}
   - \frac{8\eta_{z_1}x_{v_0}}{\lambda(\lambda+1)}
\\
  & -\frac{6}{(\lambda_2+1)\lambda}\eta_{z_2}x_{u_0}
  -\frac{24}{(\lambda_2+1)\lambda(\lambda+1)}\eta_{z_2}x_{v_0}
  - \frac{4k-2}{\lambda_2}\eta_{z_2} x_{u_0} - \frac{28k-20}{r^2}\eta_{z_2} x_{v_0}
  \\
\ge & \bigl(\frac{2k-5}{R}- \frac{k-1}{8kr}- \frac{4k-4}{8kr}
   - \frac{1}{kr^2} -\frac{3}{4kr^2 }-\frac{3}{8k^2 r^3}
  - \frac{2k-1}{4kr}  - \frac{7k-5}{16k^2r^2}  \bigr) x_{u_0}\eta_{z_1}
  \\
= & \bigl(\frac{2k-5}{R}- \frac{9k-7}{8kr} - \frac{35kr-5r+6}{16k^2r^3}
     \bigr) x_{u_0}\eta_{z_1}
\\
\ge  & \bigl(\frac{2k-5}{R}- \frac{9k-6}{8kr}
     \bigr) x_{u_0}\eta_{z_1}
\\
=  &  \frac{k\bigl[(16k-40)r-9R\bigr]+6R}{8krR}
     x_{u_0}\eta_{z_1}>0.
\end{align*}
Thus we have $\lambda(G_2)>\lambda(G)$, a contradiction. So Claim  \ref{specST-th2-claim7}(ii) holds. This proves Claim  \ref{specST-th2-claim7}.
 \q

\par\medskip
\noindent{\bf Proof of Theorem \ref{spectral-spanning-tree-thm2}:}
By Claim \ref{specST-th2-claim6v}, we may suppose that $k\ge 3$. By Claim  \ref{specST-th2-claim7}(ii), $\overline{G}[U_3]$ is a tree with $d_{\overline{G}[U_3]}(w_1)=d_{\overline{G}[U_3]}(w_2)=k$. If $w_1w_2\in E(G)$, then \(w_1\) and \(w_2\) have exactly one common neighbor, as   \(\overline{G}[U_3]\) is a tree with $2k$ edges.
Consequently,
$|N_{\overline{G}[U_3]}(w_1)\cap N_{\overline{G}[U_3]}(w_2)|=1$.
It follows that \(\overline{G}[U_3]\) is obtained from two copies
of \(K_{1,k}\) by identifying one leaf of one copy with one leaf
of the other. By Claims \ref{specST-th2-claim5}(iii) and \ref{specST-th2-claim6}(ii), we have $G_2\cong H_2(k)$, and thus the result already holds.

Next, we assume that  $w_1w_2\in E(\overline{G})$.
Then $e(\overline{G}[W_4])= d_{\overline{G}[U_3]}(w_1)+d_{\overline{G}[U_3]}(w_2)-1=2k-1$ and
$\overline{G}[W_4]$  can be obtained from $K_2$ by attaching $k-1$ pendant vertices to each endpoint.
By Claim \ref{specST-th2-claim5}(iv), we get $|U_3\setminus W_4|=1$ and $d_{\overline{G}[U_3]}(w)=1$, where $\{w\}=U_3\setminus W_4$.
Hence, $\overline{G}[U_3]$ is a tree with  $d_{\overline{G}[U_3]}(w_1)=d_{\overline{G}[U_3]}(w_2)=k\ge 3$, $d_{\overline{G}[U_3]}(w_3)=2$ and $ww_3\in E(\overline{G})$.

As $w_3\in W_4$, $d_{\overline{G}[U_3]}(w_3)=2$  and $ww_3\in E(\overline{G})$,
we get $w_3$ is adjacent to a unique vertex of $\{w_1,w_2\}$. Without loss of generality, suppose that   $w_1w_3\in E(\overline{G})$ and $w_2w_3\in E(G)$.
Let $G_3=G+w_1w_2 -w_2w_3 +ww_3 -w_1w$. By Claims \ref{specST-th2-claim5}(iii) and \ref{specST-th2-claim6}(ii), we have $G_3\cong H_2(k)$ and $\emptyset=N_{V_p}(w)\subsetneqq N_{V_p}(w_3)\subsetneqq N_{V_p}(w_2)= N_{V_p}(w_1)$. Furthermore, by Claim \ref{specST-th2-claim6}(iii), we have
$x_{w_j}>x_{w_3}$ and $x_{w_j}>x_w$  for any $j\in \{1,2\}$.
Thus
\begin{align*}
	\mathbf{x}^{T}(A(G_3)-A(G))\mathbf{x}
	&=2(x_{w_1}x_{w_2}-x_{w_2}x_{w_3}+x_wx_{w_3}-x_{w_1}x_w)
	\\
	&=2(x_{w_1}-x_{w_3})(x_{w_2}-x_w)>0.
\end{align*}
Therefore, $\lambda(G_3)>\lambda(G)$, a contradiction. This completes the proof of Theorem \ref{spectral-spanning-tree-thm2}. \q


\section*{Acknowledgements}
Jia Wei was supported by the Tianyuan Fund for Mathematics of the National Natural Science Foundation of China (No.~12526607), 
Muhuo Liu was supported by the Natural Science Foundation of Guangdong Province (No.~2024A1515011899) and the National Natural Science Foundation of China (No.~12271182),  Zhifu You was supported by the scientific research capacity improvement project of the doctoral program construction unit of Guangdong Polytechnic Normal University (NO.~22GPNUZDJS29) and Hong-Jian Lai is supported in part by the  National Natural Science Foundation of China (No.~12471333).


\end{document}